\begin{document}
\newtheorem*{remark*}{Remark}
\newtheorem{theorem}{Theorem}[section]
\newtheorem{proposition}[theorem]{Proposition}
\newtheorem{corollary}[theorem]{Corollary}
\newtheorem{definition}[theorem]{Definition}
\newtheorem{lemma}[theorem]{Lemma}
\newtheorem{example}[theorem]{Example}
\newtheorem{exercise}[theorem]{Exercise}
\newtheorem{remark}[theorem]{Remark}
\newtheorem{question}[theorem]{Question}
\newtheorem{conjecture}[theorem]{Conjecture}
\newtheorem{claim}[theorem]{Claim}
\newtheorem{fact}[theorem]{Fact}
\newtheorem*{ac}{Acknowledgements}
\newtheorem{notation}[theorem]{Notation}

\newcommand{\jinsong}[1]{\textbf{\textcolor[rgb]{0.00,0.00,1.00}{[[#1 -Jinsong]]}}}
\newcommand{\zhengwei}[1]{\textbf{\textcolor[rgb]{0.00,1.00,1.00}{[[#1 -Zhengwei]]}}}

\newcommand{\Projharm}{\mathcal{P}_{H_2}}
\newcommand{\A}{\mathscr{A}}
\newcommand{\M}{\mathcal{M}}
\newcommand{\NH}{\text{dim Hom}}
\newcommand{\RG}{\mathcal{R}(G)}
\newcommand{\ssl}{\mathfrak{sl}}

\newcommand{\Projep}{\mathcal{P}_{F\!N}}
\newcommand{\C}{\mathscr{C}}
\newcommand{\D}{\mathscr{D}}
\newcommand{\El}{\mathscr{E}_{\pi}} 
\newcommand{\Hl}{\mathscr{R}_{\pi}} 
\newcommand{\Bl}{B_{\pi}} 
\newcommand{\Hlz}{\mathscr{R}_{\pi, 0}} 
\newcommand{\CR}{\mathcal{C\!R}}
\newcommand{\SB}{\mathcal{S}_{B}} 
\newcommand{\SBz}{\mathcal{S}_{B,0}} 
\newcommand{\SPi}{\mathcal{S}_{\pi}} 
\newcommand{\g}{\mathfrak{g}}
\newcommand{\Z}{\mathbb{Z}}
\newcommand{\fw}{\mathbf{W}_f}
\newcommand{\sr}{\mathbf{R}_s}
\newcommand{\sW}{\mathscr{W}}
\newcommand{\sHW}{\mathscr{H\!W}}
\newcommand{\sWl}{\mathscr{W}_{\ell}}
\newcommand{\sR}{\mathscr{R}}
\newcommand{\sHR}{\mathscr{H\!R}}
\newcommand{\sRa}{\mathscr{R}_{a}}
\newcommand{\sRl}{\mathscr{R}_{\ell}}
\newcommand{\V}{\mathcal{V}}
\newcommand{\m}{\mathbf{m}}
\newcommand{\kk}{\mathbf{k}}
\newcommand{\jj}{\mathbf{j}}
\newcommand{\vv}{\mathbf{v}}
\newcommand{\ssb}{\mathbf{s}}
\newcommand{\ee}{\mathbf{e}}
\newcommand{\rr}{\mathbf{r}}
\newcommand{\ww}{\mathbf{w}}
\newcommand{\G}{G}
\newcommand{\N}{\mathcal{N}}
\newcommand{\bR}{\mathbf{R}}
\newcommand{\CL}{\mathcal{C}_\ell}
\newcommand{\wchi}{\widetilde{\chi}}
\newcommand{\hchi}{\widehat{\chi}}
\newcommand{\rk}{n_r} 
\newcommand{\spec}{\mathbf{Spec}_{\ell}} 
\newcommand{\rooth}{\mathscr{R}_h} 
\newcommand{\hr}{\rr} 
\newcommand{\hil}{\mathcal{H}} 
\newcommand{\tg}{\tilde{g}} 
\newcommand{\tf}{\tilde{f}} 
\newcommand{\ttj}{\tilde{\vartheta_{\jj}}} 
\newcommand{\ttr}{\tilde{\vartheta_{\hr}}} 

\title{Antisymmetric characters and Fourier duality } 
\author{Zhengwei Liu}
\address{Harvard University}
\email{zhengweiliu@fas.harvard.edu}
\author{Jinsong Wu}
\address{IASM, Harbin Institute of Technology and Harvard University}
\email{wjs@hit.edu.cn}
\date{}
\maketitle

\begin{abstract}
Inspired by the quantum McKay correspondence, we consider the classical $ADE$ Lie theory as a quantum theory over $\ssl_2$.
We introduce anti-symmetric characters for representations of quantum groups and investigate the Fourier duality to study the spectral theory. In the $ADE$ Lie theory, there is a correspondence between the eigenvalues of the Coxeter element and the eigenvalues of the adjacency matrix.  We formalize related notions and prove such a correspondence for representations of Verlinde algebras of quantum groups: this includes the quiver of any module category acted on by the representation category of any simple Lie algebra $\g$ at any level $\ell$. This answers an old question posed by Victor Kac in 1994 and a recent comment by Terry Gannon. 
\end{abstract}


\section{Introduction}
\color{black}
In 1980, McKay found his correspondence between subgroups of $SU(2)$ and the affine $ADE$ Dynkin diagrams \cite{Mck80}. In 1987, Cappelli, Itzykson and Zuber found a correspondence between $ADE$ Dynkin diagrams and the modular invariance of quantum $\ssl_2$ \cite{CIZ87}, which was further formulated by Ocneanu as a correspondence to subgroups of quantum $\ssl_2$ \cite{Ocn02}. This has been considered as a quantum analogue of the McKay Correspondence. We elaborate this idea in \S \ref{Sec:Backgroupnd}.

In $ADE$ Lie theory, the action of the Coxeter element on the root system has periodicity equal to the Coxeter number $n_c$. Its eigenvalue is $e^{\frac{k 2\pi i}{n_c}}$, for some $k\in \Z_{n_c}$, and $k$ is called a Coxeter exponent.  
On the other hand, the eigenvalues of the adjacency matrix of the $ADE$ Dynkin diagram are given by $e^{\frac{k \pi i}{n_c}}+e^{\frac{-k \pi i}{n_c}}$, where $k$ is an exponent with the same multiplicity. 

Kac asked the question, whether the Coxeter exponents for the $ADE$ quivers can be generalized beyond $SU(2)$ theory in a talk given by Terry Gannon in 1994 at MIT \cite{Kac94q}. Gannon commented in his lecture at the Shanks workshop at Vanderbilt University in 2017, that if such a generalization exists, then the correspondence between the Coxeter exponents and spectrum of the adjacency matrices should work for all quivers of module categories acted on by the representation category of quantum $\ssl_n$, and it may even be true over any simple Lie algebra $\g$ at level $\ell$ \cite{Gan17}. These quivers were called higher Dynkin diagrams by Ocneanu in \cite{Ocn17}.

\color{black}

In this paper, we generalize the Coxeter exponents and prove such a correspondence for any (graded) unital *-representation $\Pi$ of the Verlinde algebra \cite{Ver88} of any simple Lie algebra $\g$ at any level $\ell$, including the generalized Dynkin diagrams in \cite{FraZub90} and higher Dynkin diagrams over quantum $\ssl_n$. This answers positively the questions of Kac and Gannon, when $\g$ is of type $ADE$. A modified correspondence also holds for other types. Our main results are summarized in \S \ref{Sec: Summary}.

We study the notions related to affine Lie algebras and quantum groups using Lie groups and their subgroups. We consider a quantum group as a simple Lie algebra $\g$ with a level $\ell$. We construct its Verlinde algebra by {\it anti-symmetric characters} defined in \S \ref{Sec: Anti-symmetric Character}. For each level $\ell$, the anti-symmetric characters are defined by the Weyl denominators on a domain $T_{\ell,0}$, a subset of the maximal torus of the corresponding Lie group. From the choice of the domain, we obtain a natural cutoff of the fusion rule of representations from the Lie algebra $\g$ to the quantum group at level $\ell$, also known as the Wess-Zumino-Witten cutoff~\cite{WZ71, Nov81, Witten83, Witten84}. 
We attempt to understand the connection between the McKay correspondence and the quantum McKay correspondence with this approach.
Another motivation for introducing the anti-symmetric characters is understanding the fusion rule and their generating functions in a closed form for the representations of two families of quantum subgroups constructed in \cite{Liu15}.

Additive functions on Auslander-Reiten quivers were studied by Gabriel in \cite{Gab80}. 
For an $ADE$ Dynkin diagram, the root system can be realized as additive functions on the Auslander-Reiten quiver, and the Coxeter element is given by a translation functor. The adjacency matrix of the $ADE$ Dynkin diagram can be extended to a $\Z_2$-graded unital *-representation of the Verlinde algebra of $\ssl_2$ at level $\ell$, and $\ell=n_c-2$.
We generalize related concepts for any (graded) unital *-representation $\Pi$ of the Verlinde algebra of a simple Lie algebra $\g$ at level $\ell$, and we introduce the corresponding adjacency matrices, quantum root spaces, quantum Coxeter elements and quantum Coxeter exponents. See the end of \S \ref{Sec: quantum Coxeter exponents} for a dictionary. 

We study the spectral theory using the Fourier duality in \S \ref{Sec: Fourier Duality}.
We apply the Fourier transform to diagonalize the actions of the adjacency matrices, and identify their simultaneous eigenvalues as elements in $T_{\ell,0}$ modulo the Weyl group action, which we call the {\it spectrum}.
On the other hand, we apply the Fourier transform to diagonalize the actions of quantum Coxeter elements, and identify their simultaneous eigenvalues as elements in $T_{\ell,0}$. Moreover, we introduce the quantum Coxeter exponents for the elements in $T_{\ell,0}$.
Then we can compare their multiplicities using these two identifications and obtain an equality in Theorem \ref{thm: exponents}. This equality generalizes the correspondence of Coxeter exponents for the $ADE$ Dynkin diagrams, and in this theorem we answer the questions of Kac and Gannon.

Gabriel constructed the root category using the quiver representations of $ADE$ Dynkin diagrams in \cite{Gab72}.
Using the root category, Ringle constructed the positive part of the corresponding Lie algebras in his seminal work \cite{Rin90}.
Happel gave another construction of the root category using 2-periodic derived categories of quiver representations in \cite{Hap87,Hap88}. Remarkably, Peng and Xiao constructed simple Lie algebras in \cite{PenXia97} and symmetrizable Kac-Moody Lie algebras using these derived categories in  \cite{PenXia00}.
\textcolor{black}{Dorey constructed the $ADE$ root system and the Coxeter element using the corresponding nimrep data of quantum $\ssl_2$ in \cite{Dor93}.}
Ocneanu proposed a construction of the root category using the module category over quantum $\ssl_2$ in \cite{Ocn02,Ocn17}, see further discussion by Kirillov and Thind in \cite{KirThi10,KirThi11}. 
Furthermore, Kirillov and Thind constructed the corresponding derived category in \cite{KirThi12}. It will be interesting to generalize the construction of the Lie algebras for quivers over a quantum group $\g$ beyond $\ssl_2$.

\color{black}
Around 1986, Kac observed a correspondence between the diagonal of an $\ssl_2$ modular invariance and the Coxeter exponents of $E_6$, which turned out to be a general phenomenon for all $\ssl_2$ modular invariances as discussed in \cite{CIZ87a, CIZ87, Kat87}.  
It is natural to ask for a generalization of this correspondence over a simple Lie algebra $\g$ at level $\ell$ as well. 
Kac and Gannon asked whether for any module category of the representation category of quantum $\ssl_n$, there is a correspondence between the diagonal of its modular invariance and the spectrum of its adjacency matrices. 
We hope to answer this question in the future.
\color{black}

\begin{ac}
The authors would like to thank Adrian Ocneanu for his motivating course Physics 267 at Harvard in the 2017 fall term \cite{Ocn17}.
The authors would like to thank Arthur Jaffe for much help to complete this paper. 
The authors would like to thank Robert Coquereaux, David Evans, Terry Gannon, Vaughan Jones, Victor Kac, Jie Xiao and Jean-Bernard Zuber for helpful comments. 
Zhengwei Liu is supported by grants TRT 080 and TRT 159 from the Templeton Religion Trust. 
Jinsong Wu is supported by NSFC 11771413 and grant TRT 159.
\end{ac}

\section{Background}\label{Sec:Backgroupnd}

It is well known that the simple Lie algebras are classified by Dynkin diagrams as their underlying symmetry. The construction from Dynkin diagrams to Lie algebras was given by Chevalley. The simply-laced Dynkin diagrams are the $ADE$ Dynkin diagrams and one can obtain the others from an orbifold construction of the $ADE$ ones.

In 1980, McKay found a one-to-one correspondence between subgroups of $SU(2)$ and the affine ADE Dynkin diagrams \cite{Mck80}. The affine Dynkin diagrams appeared as the quivers of the irreducible representations (irreps) of $SU(2)$ tensoring the standard representation. The Mckay correspondence relates subgroups of $SU(2)$ and $ADE$ Lie theory.

Around 1968, Kac and Moody studied infinite dimensional Lie algebras, known as Kac-Moody Lie algebras, see \cite{Kac68, Moo68, Kac90}.
The type $A_{\ell+1}$ Dynkin diagrams appeared as the quivers of the semisimple irreps of the affine Lie algebra $\ssl(2)$ at level $\ell$.
In 1983, Jones classified the indices of a subfactor $\N \subset \M$, an inclusion of von Neumann algebras with trivial center  \cite{Jon83}:
$$
\left\{4\cos^2{\frac{\pi}{2+\ell}}, \ell=1,2,\cdots\right\} \cup [4,\infty].
$$
For each $\ell$, he constructed a subfactor with index $4\cos^2{\frac{\pi}{2+\ell}}$, whose principal graphs, namely the quiver of bimodules, is the $A_{\ell+1}$ Dynkin diagram. 
The $A_{\ell+1}$ Dynkin diagram also appeared as the quiver of semisimple irreps of the Drinfeld-Jimbo quantum group $U_q sl_2$, $q=e^{\frac{\pi i}{2+\ell}}$ \cite{Jim85,Dri86}. The correspondence between the two representation theories is given in \cite{Jon87}. 
Wassermann found another conceptual connection between subfactor theory and representation theory of quantum groups in conformal field theory \cite{Was98}. The correspondence between representations of affine Lie algebras and representations of quantum groups is given by Kazhdan and  Lusztig in \cite{KazLus91}. 
The $A_{\ell+1}$ Dynkin diagram appeared as a cutoff of $A_\infty$. This is a general phenomenon also known as the Wess-Zumino-Witten cutoff~\cite{WZ71, Nov81, Witten83, Witten84}.

\color{black}
In 1987, Cappelli, Itzykson and Zuber classified the modular invariances of quantum $\ssl_2$ at level $\ell$ by $ADE$ Dynkin diagrams with Coxeter number $2+\ell$.  The diagonals of the modular invariants matched the multiplicities of the Coxeter exponents, which was first observed by Kac for the $E_6$ case, see further discussion in \cite{CIZ87}. A connection between the diagonals of the modular invariances and the spectrum of the adjacency matrices of representations of the Verlinde algebra of quantum $\ssl_n$ has been studied by Di Francesco and Zuber in \cite{FraZub90}.
In 1988, Ocneanu outlined the surprising $A_{n}, D_{2n}, E_6, E_8$ classification of the standard invariants of subfactors with index less than 4 in \cite{Ocn88}. The proof of the classification is given in \cite{GdlHJ89,BN91,Izu91,Izu94,Kaw95b}. This $ADE$ classification is also a classification of subfactors due to Popa's reconstruction theorem \cite{Pop94}. See the survey paper \cite{JMS14} for more details. In 1994, Ocneanu proposed a connection between subfactors and extended Turaev-Viro TQFT \cite{Ocn94}.
Given a subfactor $\N\subset \M$, if the $\N-\N$ bimodule category is isomorphic to the representation category of quantum $\ssl_2$, which defines a Turaev-Viro TQFT \cite{TurVir92}, then the subfactor defines an extended TQFT. Ocneanu considered this subfactor  (or the $\M-\M$ bimodule category) as a subgroup of quantum $\ssl_2$ and the $\N-\M$ module category as a module of quantum $\ssl_2$, see \cite{Ocn00}.
All $ADE$ Dynkin diagrams appeared as the quiver of the generating $\N - \N$ bimodule, corresponding to the fundamental representation of quantum $\ssl_2$, acting on irreducible $\N-\M$ bimodules.
The correspondence between $ADE$ subfactors and modular invariance is given by B\"ockenhauer, Evans and Kawahigashi based on the $\alpha$-induction \cite{LonReh95,Xu98,BEK99,BEK00,Ocn00}.
A corresponding categorical formalization of module categories and the classification has been done by Kirillov and Ostrik with independent proofs in \cite{KirOst02,Ost03}.
The $ADE$ classification related to quantum $\ssl_2$ has become known as the quantum McKay correspondence. 
\color{black}

Inspired by Chevalley's construction and McKay correspondence, one can consider the $ADE$ Lie theory as a mathematical theory over $\ssl_2$. It turns out to be natural to study the notions in Lie theory over quantum $\ssl_2$ and other quantum groups.
In this direction, Zuber introduced generalized Dynkin diagrams over $\ssl_n$ at level $\ell$ and a generalization of Coxeter elements and Coxeter exponents in \cite{Zuber97, Zuber98}. Many examples of generalized Dynkin diagrams appeared in conformal field theory, see the book of Di Francesco, Mathieu, and S\'en\'echal \cite{FMS}. 
Ocneanu reformulated the generalized Dynkin diagrams as the quivers of modules of $\ssl_n$ at level $\ell$ and proposed a classification for $\ssl_2$, $\ssl_3$ and $\ssl_4$ in \cite{Ocn02}, and he called these quivers higher Dynkin diagrams \cite{Ocn17}. 
Another approach is given by Etingof and Khovanov in terms of integer modules over the Verlinde algebras of quantum groups \cite{EtiKho95}.

Zuber's motivation for studying his generalized Dynkin diagrams arises from conformal field theory \cite{Zuber97,Zuber98}.
Xu constructed type $E$ module categories of the representation category of quantum groups as modules of commutative Frobenius algebras which arose from conformal inclusions in conformal field theory \cite{Xu98}. He also computed the corresponding quivers, namely higher Dynkin diagrams, for small rank quantum groups. 
The analogous construction for orbifolds, namely type $D$ module categories, was given in \cite{BE99} and the remaining $E_7$ modular invariant and module category was constructed in \cite{BEK00}. In general, it requires the modular invariance and 6j symbols to construct irreducible modules and to compute the quivers as shown in \cite{BEK99,BEK00,Ocn00}.
However, it remains a difficult problem to compute the 6j symbols in a closed form. It is also challenging to compute the corresponding quivers in closed forms when the quantum groups have large rank.  

Liu introduced a new type of Schur-Weyl duality for families of quantum subgroups in \cite{Liu15}. This provides new methods to compute the quivers of module categories without computing the modular invariance and 6j symbols. See further discussion in \cite{LiuRyb}. It would be interesting to compute the quantum Coxeter exponents and multiplicities for these families of examples.

For the $\ssl_3$ case, Di Francesco and Zuber investigated the McKay correspondence both in the classical and quantum sense in \cite{FraZub90}. 
Gannon classified the modular invariance for quantum $\ssl_3$ in \cite{Gan94}.
Ocneanu proposed a classification of unitary module categories of quantum $\ssl_3$ and the non-existence of the Ocneanu cell system of a nimrep in \cite{Ocn02}. The existence of the Ocneanu cell systems was computed by Evans and Pugh in \cite{EvaPug09}, and the non-existence of the Ocneanu cell system of a nimrep was proved by Coquereaux, Schieber and Isasi in \cite{CIS10}.
Recently Evans and Pugh classified modular invariances and unitary module categories for $SO(3)_{2m}$ in \cite{EvaPug18}.

Following the quantum McKay correspondence, Ocneanu proposed a generalization of the Lie theory in \cite{Ocn02} and gave a  course ``higher representation theory'' at Harvard in the 2017 fall term. For Ocneanu's blueprint shown in his course, we refer readers to the lecture notes in \cite{Ocn17}. Some motivating examples including higher Dynkin diagrams, higher roots, higher Coxeter elements over $\ssl_3$ were discussed in \cite{Ocn17}, see also hyper roots over $\ssl_3$ in \cite{Coq18}.

\section{Preliminaries}  

Let $\g$ be a simple complex {\bf Lie algebra}, that is the complexification of the Lie algebra $\mathfrak{k}$ of a simply-connected compact {\bf Lie group} $K$.
Let $\mathfrak{t}$ be a maximal abelian subalgebra of $\mathfrak{k}$, and let $T$ be the {\bf maximal torus}, a Lie subgroup of $K$ whose Lie algebra is $\mathfrak{t}$. 
Then $\mathfrak{h}=\mathfrak{t}+i\mathfrak{t}$ is a {\bf Cartan subalgebra} of $\g$.
Denote by $\langle \cdot, \cdot \rangle$ an inner product on $\g$ such that it is invariant under the adjoint action of $K$ and taking real values on $\mathfrak{k}$. Let $\rk$ be the {\bf rank} of $\g$.

Denote the set of {\bf roots} of $\g$ by $\bR$. 
Let $H_\rr$ be the \textbf{coroot} of $\rr$ for $\g$.
Denote by $\mathfrak{R}$ the $\Z$-linear space spanned by $\{H_\rr: \rr\in \bR\}$.
For any $\rr\in \bR$, we denote by $\theta_\rr$ the {\bf reflection} $\theta_\rr(\kk)=\kk-\langle \kk, H_\rr\rangle\rr$ for any $\kk\in\mathfrak{t}$.
Let $W$ be the {\bf Weyl group} of $\g$ generated by the reflections $\theta_\rr(\kk)$.
For any $\theta\in W$, we denote by $\varepsilon(\theta)$ the sign of $\theta$.
Let $\sr=\{\rr_1, \ldots, \rr_{\rk}\}$ be the set of \textbf{simple roots} in $\bR$.
Any root $\displaystyle \rr=\sum_{j=1}^{\rk}k_j\rr_j$ with $k_j\in \mathbb{N}$ is a {\bf positive root}. 
The set of positive roots is denoted by $\bR_+$.
Let $\rr_+$ be the {\bf highest positive root}, namely $\rr_+ +\rr$ is not a root for any $\rr \in \sr$.
We assume that the inner product $\langle \cdot, \cdot\rangle$ is normalized such that $\langle \rr_+, \rr_+\rangle =2$.
Let $\displaystyle \rr_+=\sum_{j=1}^{\rk} a_j\rr_j$ and $\displaystyle H_{\rr_+}=\sum_{j=1}^{\rk} a_j^\vee \rr_j$.
Then $\displaystyle c_\g=1+\sum_{j=1}^{\rk} a_j$ is the {\bf Coxeter number} of $\g$ and $\displaystyle c_\g^\vee=1+\sum_{j=1}^{\rk}a_j^\vee$ is the {\bf dual Coxeter number}.
We denote by $\sR$ the \textbf{root lattice}, 
$$\sR=\left\{\sum_{j=1}^{\rk} k_j\rr_j : k_j\in\Z \right\}.$$

Let $\fw=\{\ww_1, \ldots, \ww_{\rk}\}$ be the set of \textbf{fundamental weights}, such that $\langle \ww_j, H_{\rr_k}\rangle=\delta_{j,k}$ for any $1\leq j, k\leq \rk$.
Denote by $\sW$ the $\Z$-linear space spanned by the fundamental weights.
Then $\sW$ is the \textbf{weight lattice} of $\g$, and $\sW$ is an additive group.
The root lattice $\sR$ is a subgroup of $\sW$.
It is known that 
$$Z(\g):=\sW/\sR$$ is isomorphic to the {\bf center} $Z(K)$ of the Lie group $K$. Denote its order by $n_z:=|Z(\g)|$.
Let $\mathcal{C}$ be the \textbf{closed fundamental Weyl chamber} of $\sW$, i.e.
$$\mathcal{C}=\left\{\kk=\sum_{j=1}^{\rk}k_j\ww_j: k_j\geq 0, 1\leq j\leq \rk\right\}.$$
For any $\kk\in \mathcal{C}$, we denote by $\mathcal{W}(\kk)$ the {\bf weight diagram} of the irreducible representation with the highest weight $\kk$.
The \textbf{Weyl vector} $\rho$ is the sum of fundamental weights, 
$$\rho=\sum_{j=1}^{\rk}\ww_j.$$

For any $\kk\in\mathcal{C}$, we denote $V_{\kk}$ the {\bf irreducible representation} of $K$ with highest weight $\kk$.
Let $\chi_\kk$ be the character of the Lie group $K$ associated to $V_{\kk}$.
Suppose $\displaystyle V_{\jj}\otimes V_{\kk}=\bigoplus_{\ssb \in \mathcal{C}} N_{\jj,\kk}^{\ssb} V_{\ssb}$, where $N_{\jj,\kk}^{\ssb}\in \mathbb{N}$ is called a {\bf fusion coefficient}.
Then 
\begin{equation}\label{Equ:fusion}
\displaystyle \chi_{\jj}\chi_{\kk}=\sum_{\ssb \in \mathcal{C}} N_{\jj,\kk}^{\ssb} \chi_{\ssb}.
\end{equation}

For any $\kk\in \mathcal{C}$ and $H\in\mathfrak{t}$, the {\bf Weyl character formula} states
\begin{align*}
\chi_{\kk}(e^{H})
&=\frac{\sum_{\theta\in W}\varepsilon(\theta) e^{i\langle \theta(\kk+\rho), H\rangle}}{\sum_{\theta\in W}\varepsilon(\theta) e^{i\langle \theta(\rho), H\rangle }}.
\end{align*}

For a level $\ell\in \mathbb{N}$, let $n_c=c_\g^\vee+\ell$ be the {\bf quantum Coxeter number}.
Define
\begin{align*}
\rooth&:=\{ \theta(\rr_+) | \theta \in W\};\\
\sRa&:=\text{span}_{\Z}\{ \theta(\rr_+), \theta \in W \};\\
\sRl&:=n_c\sRa. 
\end{align*}

For any $\kk\in \sW$, define a {\bf translation} $\vartheta_\kk$ on $\sW$ by 
$$\vartheta_\kk(\jj)=\kk+\jj, \quad \jj\in \sW.$$
Then the set $\{\vartheta_\kk\}_{\kk\in \sW}$ is a free abelian group.
For a level $\ell \in \mathbb{N}$, 
the \textbf{affine Weyl group} $\widehat{W}$ is generated by the Weyl group $W$ and the translation $\vartheta_{n_c \rr_+}$.
The translation subgroup $W_t$ of $\widehat{W}$ is given by 
\begin{align*}
W_t:=\{\vartheta_{\rr} \; | \; \rr \in \sRl\}. 
\end{align*}
Then $\widehat{W}=W_t \rtimes W$.
The {\bf alcove} $\mathcal{C}_\ell$ is defined as
$$\mathcal{C}_\ell=\left\{\kk=\sum_{j=1}^{\rk} k_j\ww_j \in \mathcal{C}: k_j>0, j=1,\ldots, \rk, \langle \kk, H_{\rr_+} \rangle <n_c\right\}\subset\mathcal{C}.$$

The finite dimensional semisimple irreducible representations of the quantum group $\g$ at level $\ell$ are given by 
$\{V_{\kk} \; | \; \kk \in \mathcal{C}_{\ell}-\rho \}$.
Their fusion rule, also known as the Wess-Zumino-Witten cutoff, is
\begin{equation}\label{Equ: quantum fusion}
V_{\kk} \otimes V_{\jj}=\sum_{\ssb' \in \mathcal{C}, \hat{\theta} \in \widehat{W}, \atop \hat{\theta}(\ssb'+\rho)=\ssb+\rho} \varepsilon(\theta) N_{\kk, \jj}^{\ssb'} V_{\ssb},
\end{equation}
where $\varepsilon(\theta)=\pm 1$ is the sign of $\theta$.
The fusion algebra is known as the {\bf Verlinde algebra} of the quantum group \cite{Ver88}.

For $ADE$ Dynkin diagrams, Gabriel constructed the root system using additive functions on the Auslander-Reiten quiver  \cite{Gab80}, see also \cite{Hap87}. We briefly recall this construction and formalize related concepts in a general situation in \S \ref{Sec: quantum Coxeter exponents}. Take an $ADE$ Dynkin diagram $G$ as a bipartite graph and let $\varepsilon : G_v \to \Z_2$ be a $\Z_{2}$ grading of the vertices $G_v$ of $G$. The vertices of the {\bf Auslander-Reiten quiver} is  
$\Gamma=\{ (i, x) \in \Z_{2n_c} \times G_v: i+\varepsilon(x)=0 \in \Z_2  \}.$ 
A function $f$ on $\Gamma$ is called {\bf additive}, if $f(i,x) \in \Z$ and 
\begin{align}\label{Equ: additive}
f(i, x)+f(i+2,x)&=\sum_{y\in N(x)} f(i+1,y), \; \forall \; (i,x)\in \Gamma \;,
\end{align}
where $N(x)$ is the set of vertices adjacent to $x$. Let $\hil$ be the space of additive functions with the discrete measure on $\Gamma$, $P_{\hil}$ be the orthogonal projection from $L^2(\Gamma)$ onto $\hil$, and $\delta_{i,x}$ be the delta function at $(i,x)$. Then 
$2n_c P_{\hil}(\delta_{i,x})$ is an additive function. Moreover, 
$$\{\sqrt{2n_c} P_{\hil}(\delta_{i,x}) : (i, x) \in \Gamma \}$$
forms the root system of $G$. Furthermore the translation $\vartheta$ on $\Gamma$, $\vartheta(i,x)=(i+2,x)$, induces an Coxeter transformation on the root system.

\section{Fourier duality}\label{Sec: Fourier Duality}

In Lie theory, the exponential map $H\to e^{2 \pi H}$ is a group isomorphism 
$\mathfrak{t}/\mathfrak{R}\cong T$.
Therefore the Fourier transform $\hat{} : \kk\to \hat{\kk}$, given by
\begin{equation}\label{Equ:F}
\hat{\kk}(e^H)=e^{2 \pi i \langle \kk, H \rangle}\;,
\end{equation}
is well defined on $e^H \in T$. 
The map $~\hat{}~$ is a group isomorphism from $\sW$ to the dual of the abelian group $T$.
The weight lattice $\sW$ is the dual of the coroot lattice $\mathfrak{R}$:
\begin{itemize}
\item[(1)] $\kk\in \sW$ iff $\langle \kk, H \rangle \in \mathbb{Z}$, $\forall H \in \mathfrak{R}$;
\item[(2)] $H\in \mathfrak{R}$ iff $\langle \kk, H \rangle \in \mathbb{Z}$, $\forall \kk \in \sW$.
\end{itemize}
The following result is well-known in Lie theory:

The weight lattice $\sW$ is a free abelian group. Let $A$ be a subgroup of $\sW$, then $A$ is a free abelian group. 
\begin{definition}\label{Def:TA}
When $\sW/A$ is finite,  
we define the dual lattice of $A$ in $\mathfrak{t}$ as
$$\mathfrak{t}_{A}=\{H\in \mathfrak{t} | \langle  \rr, H \rangle\in \mathbb{Z}, \forall \rr\in A \}.$$
Define the corresponding subgroup of $T$ as
$$T_{A}=\left\{e^{2\pi H} \in T: H \in \mathfrak{t}_{A} \right\}.$$
\end{definition}
By the duality of lattices, we have the following result:
\begin{proposition}\label{prop: dual lattice}
For any $\rr \in \mathfrak{t}$,
$\rr \in A$ iff $\langle \rr, H \rangle \in \mathbb{Z}$, $\forall H \in \mathfrak{t}_{A}$.
\end{proposition}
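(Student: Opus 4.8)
The plan is to prove the biduality statement by showing that the lattice $A$ and its bidual $A^{\vee\vee}$ (taken with respect to the pairing $\langle\cdot,\cdot\rangle$) coincide inside $\mathfrak{t}$. First I would note the easy inclusion: if $\rr\in A$, then by the very definition of $\mathfrak{t}_A$ we have $\langle\rr,H\rangle\in\mathbb{Z}$ for all $H\in\mathfrak{t}_A$, so $\rr$ lies in the set on the right. The content of the proposition is the reverse inclusion, so the bulk of the work is to show that any $\rr\in\mathfrak{t}$ with $\langle\rr,H\rangle\in\mathbb{Z}$ for all $H\in\mathfrak{t}_A$ must already lie in $A$.

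The approach I would take is to reduce everything to the standard biduality for lattices in a real inner product space. Since $\sW/A$ is finite, $A$ has full rank $\rk$ in the $\mathbb{R}$-span of $\sW$, which we may identify with $\mathfrak{t}^*$ (or with $\mathfrak{t}$ via the inner product). Choose a $\Z$-basis $\{b_1,\dots,b_{\rk}\}$ of $A$; then $\mathfrak{t}_A=\{H\in\mathfrak{t}:\langle b_j,H\rangle\in\Z\ \forall j\}$ is precisely the dual lattice $A^\vee$ with respect to the pairing, and it too has full rank $\rk$ (the Gram matrix of the $b_j$ is nonsingular, so the dual basis exists and generates $A^\vee$). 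Now apply the classical fact that for a full-rank lattice $L$ in a finite-dimensional real inner product space, $(L^\vee)^\vee=L$: concretely, if $v$ pairs integrally with every element of the dual basis $\{b_1^*,\dots,b_{\rk}^*\}$ of $A^\vee$, then writing $v=\sum_j c_j b_j$ forces each $c_j=\langle v,b_j^*\rangle\in\Z$, hence $v\in A$. This is exactly the statement $\rr\in A$.

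The one point that needs a little care — and which I expect to be the main (though modest) obstacle — is the identification of $\mathfrak{t}$ with the ambient space of $\sW$ via the inner product, and the verification that $\mathfrak{t}_A$ really is a \emph{full-rank} lattice rather than something larger or degenerate. This is where finiteness of $\sW/A$ is essential: it guarantees $A\otimes\mathbb{R}=\sW\otimes\mathbb{R}$, so the pairing between $A\otimes\mathbb{R}$ and $\mathfrak{t}$ is a perfect pairing of $\rk$-dimensional spaces, and consequently $\mathfrak{t}_A$ is discrete and cocompact in $\mathfrak{t}$. Once that is in place, the dual-basis computation above is purely formal. I would also remark that this proposition is the exact counterpart, at the level of an arbitrary finite-index sublattice $A$, of the biduality (1)--(2) already recorded for the pair $(\sW,\mathfrak{R})$, and the proof is the same linear-algebra argument applied to a chosen basis of $A$ and its dual basis; no new input from Lie theory is needed.
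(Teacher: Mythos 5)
Your proposal is correct and is essentially the argument the paper has in mind: the paper offers no written proof, simply asserting the statement ``by the duality of lattices,'' and your dual-basis computation (together with the observation that finiteness of $\sW/A$ forces $A$, hence $\mathfrak{t}_A$, to have full rank $\rk$) is precisely the standard justification of that biduality. Nothing further is needed.
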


\begin{theorem}\label{Thm: group duality}
The map $~\hat{}~$ induces a group isomorphism from $\sW/A$ to the dual of the abelian group $T_{A}$.
\end{theorem}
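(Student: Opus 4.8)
The plan is to exhibit the isomorphism explicitly via the Fourier pairing and then verify the two things that make it an isomorphism onto the dual: well-definedness (the pairing descends to $\sW/A$) and nondegeneracy in both variables (kernel on the left is exactly $A$, and every character of $T_A$ arises).

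First I would recall that by \eqref{Equ:F} the map $\kk \mapsto \hat{\kk}$ is already a group isomorphism from $\sW$ onto the dual of $T$, with $\hat{\kk}(e^H) = e^{2\pi i \langle \kk, H\rangle}$. Restricting a character of $T$ to the subgroup $T_A \subseteq T$ gives a group homomorphism $\sW \to \widehat{T_A}$, $\kk \mapsto \hat{\kk}|_{T_A}$. The first step is to show this restriction map is surjective: since $T_A$ is a closed subgroup of the compact abelian group $T$, every character of $T_A$ extends to a character of $T$ (Pontryagin duality, or concretely: $T_A = T/(\text{something})^{\perp}$-type arguments via the divisibility of $T$), hence lies in the image of restriction. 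The second step is to identify the kernel. A weight $\kk$ lies in the kernel iff $e^{2\pi i \langle \kk, H\rangle} = 1$ for all $e^{2\pi H}\in T_A$, i.e. $\langle \kk, H\rangle \in \Z$ for all $H \in \mathfrak{t}_A$ — but by Proposition~\ref{prop: dual lattice} this says exactly $\kk \in A$. (One must be mildly careful that $e^{2\pi H}\in T_A$ ranges over enough $H$: the definition of $T_A$ uses $H \in \mathfrak{t}_A$, and since $\mathfrak{R} \subseteq \mathfrak{t}_A$ when $A \subseteq \sW$, passing from $\mathfrak{t}_A$ to $T_A = \mathfrak{t}_A/\mathfrak{R}$ loses nothing for the integrality condition on $\kk \in \sW$, because $\langle \kk, H\rangle \in \Z$ already for $H \in \mathfrak{R}$.) Then the induced map $\sW/A \to \widehat{T_A}$ is a well-defined injective homomorphism, and by the surjectivity step it is an isomorphism.

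A cleaner packaging, which I would probably use for the write-up, is to invoke the standard duality for compact abelian groups: the short exact sequence $0 \to A \to \sW \to \sW/A \to 0$ of discrete groups dualizes to $1 \to T_{\sW/A} \to T_\sW \to T_A \to 1$ of compact groups, where $T_\sW = T_{\mathfrak R} = T$ and $T_{\sW/A}$ is the connected/finite piece corresponding to $A$; since $\sW/A$ is finite, its Pontryagin dual is canonically $T_A$ with the discrete topology reversed, and $\widehat{\sW/A} \cong \sW/A$ non-canonically but $\widehat{\widehat{\sW/A}} \cong \sW/A$ canonically. Matching the pairing $\langle \kk, H\rangle \bmod \Z$ with the evaluation pairing gives the claimed $\hat{}$-induced isomorphism.

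The main obstacle — really the only non-formal point — is the surjectivity of the restriction map $\widehat{T} \to \widehat{T_A}$, equivalently the claim that characters of the closed subgroup $T_A$ extend to $T$; this is where compactness/divisibility genuinely enters and where one should cite Pontryagin duality (or argue directly that $T/T_A$ is finite, so $T_A$ has finite index and the extension problem is elementary). Everything else — well-definedness on cosets, the kernel computation via Proposition~\ref{prop: dual lattice}, homomorphism property — is bookkeeping that follows immediately from $\langle \cdot,\cdot\rangle$ being bilinear and $\Z$-valued on the relevant lattices.
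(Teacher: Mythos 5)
Your proposal is correct and follows essentially the same route as the paper: well-definedness from the definition of $T_{A}$, injectivity via Proposition~\ref{prop: dual lattice}, and surjectivity from the fact that every character of $T_{A}$ extends to a character of $T$ (the paper simply writes such an extension explicitly as $e^{i\langle \kk, H\rangle}$ and then checks $\kk\in\sW$ using the duality of $\sW$ with $\mathfrak{R}$). One caveat: your parenthetical fallback for surjectivity --- ``argue directly that $T/T_{A}$ is finite, so $T_{A}$ has finite index'' --- is false, since $T_{A}\cong \mathfrak{t}_{A}/\mathfrak{R}$ is a \emph{finite} subgroup of the positive-dimensional torus $T$ (so $T/T_{A}$ is again a torus); the Pontryagin-duality/divisibility argument you give first is the one that actually works, so this slip does not affect the proof.
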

\begin{proof}
If $\kk \in A$, then by the definition of $T_{A}$, we have 
$\hat{\kk}(e^H)=1$,  for any $e^H\in T_{A}$.
So $~\hat{}~$ is well-defined from $\sW/A$ to the dual of $T_{A}$.
Conversely, for any $\kk \in \sW$,  if $\hat{\kk}(e^{2 \pi H})=1, ~\forall ~ H \in \mathfrak{t}_{A}$,
then $\langle \kk, H \rangle \in \Z$.
By Proposition \ref{prop: dual lattice}, $\kk\in A$.
So $~\hat{}~$ is injective.
Suppose that $f$ is a character of $T_A$. Then $f(e^H)=e^{i\langle \kk,H\rangle}$, for some $\kk\in \mathfrak{t}$.
For any $H\in 2\pi \mathfrak{R}$, we have $e^H=1 \in T_{A}$. So $f(e^H)=1$ and $\langle \kk,H\rangle\in2\pi\Z$.
By the duality between the lattices $\sW$ and $\mathfrak{R}$, we have $\kk\in \sW$.
So the map $~\hat{}~$ is surjective. 
Therefore $~\hat{}~$ is a group isomorphism from $\sW/A$ to the dual of the abelian group $T_{A}$.
\end{proof}

For any $\ell \in \mathbb{N}$, consider the case $A=\sRl=n_c \sRa$. 
\begin{definition}
The subgroup $\mathfrak{t}_{\ell}$ of the free abelian group $\mathfrak{t}$ is
$$\mathfrak{t}_{\ell}=\{H\in \mathfrak{t} | \langle  \rr, H \rangle\in \mathbb{Z}, \forall \rr\in \sRl\}.$$
The subgroup $T_\ell$ of $T$ is
$$T_\ell=\left\{e^{2\pi H}: H \in \mathfrak{t}_{\ell} \right\}\subset T.$$
The quotient group $\sWl:=\sW/ \sRl$ is a weight torus.
\end{definition}

\begin{corollary}
The map $~\hat{}~$ induces a group isomorphism from $\sW_{\ell}$ to the dual of the abelian group $T_{\ell}$.
\end{corollary}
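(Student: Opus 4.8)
The plan is to derive this corollary directly from Theorem \ref{Thm: group duality} by specializing the subgroup $A$. First I would note that the hypothesis of Theorem \ref{Thm: group duality} (equivalently, Definition \ref{Def:TA}) is that $\sW/A$ is finite, so the one thing that must be checked before invoking the theorem is that $\sW/\sRl$ is finite. For this, recall that $\sRl = n_c \sRa$, where $\sRa = \text{span}_{\Z}\{\theta(\rr_+) : \theta \in W\}$ is a full-rank sublattice of the root lattice $\sR$ (it contains all long roots in the simply-laced case and in general is commensurable with $\sR$, since the $W$-orbit of the highest root spans $\sR \otimes \mathbb{Q}$), and $\sR$ itself has finite index in $\sW$, namely $n_z = |Z(\g)|$. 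Multiplying a finite-index subgroup by the nonzero integer $n_c$ keeps it of finite index, so $\sW/\sRl$ is finite; in fact its order is $n_c^{\rk} [\sR : \sRa]\, n_z$.

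With finiteness in hand, I would simply apply Theorem \ref{Thm: group duality} with $A = \sRl$. By Definition \ref{Def:TA}, the dual lattice $\mathfrak{t}_A$ coincides with $\mathfrak{t}_{\ell}$ and the corresponding subgroup $T_A$ coincides with $T_{\ell}$, by the very definitions of $\mathfrak{t}_{\ell}$ and $T_{\ell}$. The quotient $\sW/A = \sW/\sRl$ is by definition the weight torus $\sW_{\ell}$. Therefore Theorem \ref{Thm: group duality} says precisely that $~\hat{}~$ induces a group isomorphism from $\sW_{\ell}$ to the dual of the abelian group $T_{\ell}$, which is the assertion of the corollary.

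There is essentially no obstacle here beyond the finiteness verification: the corollary is a direct instantiation of the preceding theorem once one observes $\sRl$ has finite index in $\sW$. If one wanted to be fully self-contained, the only mildly nontrivial point is the claim that $\sRa$ has finite index in $\sR$ (equivalently, that the $W$-orbit of $\rr_+$ spans the root lattice over $\mathbb{Q}$); this follows because $\bR$ spans $\mathfrak{h}^*$ and every root is $W$-conjugate to a simple root, while in the simply-laced case every root is $W$-conjugate to $\rr_+$ so in fact $\sRa = \sR$. In the write-up I would state the finiteness of $\sW/\sRl$ as a one-line observation and then invoke Theorem \ref{Thm: group duality}.

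\begin{proof}
Take $A=\sRl=n_c\sRa$ in Theorem \ref{Thm: group duality}. We first check that $\sW/\sRl$ is finite, so that the hypothesis of Definition \ref{Def:TA} is satisfied. Since every root of $\g$ is $W$-conjugate to a simple root and the simple roots span $\mathfrak{h}^*$ over $\mathbb{Q}$, the $W$-orbit of $\rr_+$ spans the root lattice $\sR$ over $\mathbb{Q}$; hence $\sRa=\text{span}_{\Z}\{\theta(\rr_+):\theta\in W\}$ is a finite-index subgroup of $\sR$. As $\sR$ has finite index $n_z$ in $\sW$ and $n_c\neq 0$, the subgroup $\sRl=n_c\sRa$ also has finite index in $\sW$. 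By Definition \ref{Def:TA} and the definitions of $\mathfrak{t}_{\ell}$, $T_{\ell}$, we have $\mathfrak{t}_{\sRl}=\mathfrak{t}_{\ell}$ and $T_{\sRl}=T_{\ell}$, and by definition $\sW/\sRl=\sW_{\ell}$. Theorem \ref{Thm: group duality} now gives that $~\hat{}~$ induces a group isomorphism from $\sW_{\ell}$ to the dual of the abelian group $T_{\ell}$.
\end{proof}
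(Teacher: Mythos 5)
Your proof is correct and matches the paper's (implicit) argument: the corollary appears in the paper with no proof at all, being exactly the specialization $A=\sRl$ of Theorem \ref{Thm: group duality}, which is what you carry out, together with the (reasonable) extra check that $\sW/\sRl$ is finite. One small quibble: your justification that $W\cdot\rr_+$ spans $\sR\otimes\mathbb{Q}$ via "every root is $W$-conjugate to a simple root" is not quite a valid inference; the clean reason is that $W$ acts irreducibly on $\mathfrak{h}^*$ for simple $\g$, so the span of the nonzero orbit $W\cdot\rr_+$ is a $W$-invariant subspace and hence all of $\mathfrak{h}^*$.
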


\begin{corollary}
The order of the group $T_{\ell}$ is 
$$|T_{\ell}|=|\sWl|=|\sW/\sR| \cdot |\sR/ \sRa| \cdot |\sRa/\sRl|=n_z n_c^{\rk} |\sR/ \sRa|.$$
\end{corollary}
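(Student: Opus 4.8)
The plan is to compute the order $|T_\ell|$ via the chain of subgroups $\sRl \subset \sRa \subset \sR \subset \sW$ and the isomorphism $T_\ell \cong \widehat{\sW_\ell}$ from the preceding corollary, which gives $|T_\ell| = |\sW_\ell| = |\sW/\sRl|$ since a finite abelian group and its Pontryagin dual have the same order. So it suffices to verify the three factors in the multiplicative formula and then evaluate them.

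First I would split $|\sW/\sRl|$ along the tower $\sRl \subseteq \sRa \subseteq \sR \subseteq \sW$. By the third isomorphism theorem (applied twice), $|\sW/\sRl| = |\sW/\sR| \cdot |\sR/\sRa| \cdot |\sRa/\sRl|$; each quotient is finite because $\sW/\sR = Z(\g)$ is finite of order $n_z$ by the cited Lie-theoretic fact, $\sRa$ has finite index in $\sR$ (both are full-rank lattices in the $\rk$-dimensional space spanned by the roots — indeed $\sRa$ is the lattice generated by the Weyl orbit of the highest root, which spans the same real space as $\sR$), and $\sRl = n_c \sRa$ is finite index in $\sRa$. This establishes the displayed factorization. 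The only genuinely computational piece is then identifying $|\sRa/\sRl|$: since $\sRl = n_c\,\sRa$ and $\sRa$ is a free $\Z$-module of rank $\rk$, multiplication by the positive integer $n_c$ on a rank-$\rk$ lattice has cokernel $(\Z/n_c\Z)^{\rk}$, so $|\sRa/\sRl| = n_c^{\rk}$. Substituting gives $|T_\ell| = n_z \, n_c^{\rk}\, |\sR/\sRa|$, which is the claimed identity, with $|\sR/\sRa|$ left as the remaining structural invariant (it equals $1$ in the simply-laced case and records the short/long root discrepancy otherwise).

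The main obstacle — or rather the one point needing genuine care rather than routine bookkeeping — is confirming that $\sRa$ is a full-rank lattice inside the real span of the roots, i.e. that the Weyl orbit of $\rr_+$ together with $\Z$-linear combinations spans an $\rk$-dimensional lattice, so that ``multiplication by $n_c$ has index $n_c^{\rk}$'' is legitimate. This follows because the Weyl group acts irreducibly on the Cartan space and $\rr_+ \neq 0$, so its orbit cannot lie in a proper subspace; hence $\sRa \otimes \Q$ is all of $\sR \otimes \Q$ and both are rank $\rk$. Everything else is the formal third-isomorphism-theorem argument plus the already-proved duality $|T_\ell| = |\sW_\ell|$, so I would present the proof as: (i) invoke $|T_\ell| = |\sW_\ell|$; (ii) factor via the subgroup tower; (iii) plug in $|\sW/\sR| = n_z$ and $|\sRa/\sRl| = n_c^{\rk}$; (iv) leave $|\sR/\sRa|$ as is.

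\begin{proof}
By the preceding corollary, $~\hat{}~$ is a group isomorphism from $\sW_\ell$ to the dual of $T_\ell$. Since a finite abelian group and its Pontryagin dual have the same order, and since (as shown below) $\sW_\ell$ is finite, we get $|T_\ell| = |\sW_\ell| = |\sW/\sRl|$.

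Consider the tower of subgroups $\sRl \subseteq \sRa \subseteq \sR \subseteq \sW$. Each successive quotient is finite: $\sW/\sR = Z(\g)$ has order $n_z$ by the cited fact; $\sRa$ is a lattice of full rank $\rk$ in the real span of the root system (the Weyl group acts irreducibly on $\mathfrak{h}$, so the orbit of $\rr_+ \neq 0$ is not contained in any proper subspace), hence $\sR/\sRa$ is finite; and $\sRl = n_c\sRa$ has finite index in $\sRa$. Applying the third isomorphism theorem twice yields
\begin{equation*}
|\sW/\sRl| = |\sW/\sR| \cdot |\sR/\sRa| \cdot |\sRa/\sRl|.
\end{equation*}
Now $\sRa$ is a free abelian group of rank $\rk$, and $\sRl = n_c\sRa$ is its image under multiplication by the positive integer $n_c$; therefore $\sRa/\sRl \cong (\Z/n_c\Z)^{\rk}$ and $|\sRa/\sRl| = n_c^{\rk}$. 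Substituting $|\sW/\sR| = n_z$ and $|\sRa/\sRl| = n_c^{\rk}$ gives
\begin{equation*}
|T_\ell| = |\sWl| = |\sW/\sR| \cdot |\sR/\sRa| \cdot |\sRa/\sRl| = n_z\, n_c^{\rk}\, |\sR/\sRa|,
\end{equation*}
as claimed.
\end{proof}
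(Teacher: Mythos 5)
Your proof is correct and follows the route the paper intends: the paper states this as an unproved corollary of the duality $|T_\ell|=|\sWl|$, and your filling-in via the subgroup tower $\sRl\subseteq\sRa\subseteq\sR\subseteq\sW$, index multiplicativity, and the computation $|\sRa/\sRl|=n_c^{\rk}$ (justified by irreducibility of the Weyl group action, so that $\sRa$ has full rank $\rk$) is exactly the standard argument. No gaps.
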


\begin{remark}
When $\g=\mathfrak{sl}_n$, $\sRl=n_c \sR$. So 
$|T_{\ell}|=|\sWl|=n (n+\ell)^{n-1}$.
\end{remark}

Let $L^2(\sWl)$ be the complex $L^2$ functions on $\sWl$ with counting measure.
Let $L^2(T_{\ell})$ be the complex $L^2$ functions on $T_{\ell}$ with Haar measure.
Recall that $\sWl$ and $T_{\ell}$ are dual to each other. 
For any $f\in L^2(\sWl)$, its Fourier transform $\mathcal{F}(f)$ in $L^2(T_{\ell})$ is 
$$\mathcal{F}(f)(e^H)=\sum_{\kk\in \sWl}f(\kk) e^{i\langle \kk, H\rangle}, \quad e^H\in T_{\ell}.$$
Then $\mathcal{F}$ is a unitary transformation:
$$\langle \mathcal{F}(f), \mathcal{F}(f')\rangle=\langle f,f'\rangle, \quad f, f'\in L^2(\sWl).$$
For any $g\in L^2(T_{\ell})$, the inverse Fourier transform $\mathcal{F}^{-1}(g)$ of $f$ is 
$$\mathcal{F}^{-1}(g)(\kk)=\frac{1}{|T_{\ell}|}\sum_{e^H\in T_\ell}g(e^H) e^{-i\langle \kk, H\rangle}, \quad \kk\in \sWl.$$

For any $f\in L^2(\sWl)$ and $\theta\in W$, we define $\theta(f)$ as $\theta(f)(\kk)=f(\theta^{-1}(\kk))$ for any $\kk\in \sWl$.
\begin{definition}
Let $L^2(\sWl)^W$ be the space of all anti-symmetric functions on $\sWl$:
$$L^2(\sWl)^W=\{f \in L^2(\sWl) : \theta(f)=\varepsilon(\theta) f, \ \forall \theta\in W\}.$$
\end{definition}
For any $g\in L^2(T_\ell)$ and $\theta\in W$, we define $\theta(g)$ as $\theta(g)(e^H)=g(e^{\theta^{-1}(H)})$ for any $e^H\in T_\ell$.
\begin{definition}
Let $L^2(T_\ell)^W$ be the space of all anti-symmetric functions on $T_\ell$:
$$L^2(T_\ell)^W=\{g\in L^2(T_\ell) : \theta(g)=\varepsilon(\theta) g, \ \forall \theta\in W\}.$$
\end{definition}

\begin{proposition}\label{prop: anti}
We have
$$\mathcal{F}(L^2(\sWl)^W)=L^2(T_\ell)^W.$$
\end{proposition}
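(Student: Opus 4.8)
The plan is to use the fact that the Fourier transform $\mathcal{F}\colon L^2(\sWl)\to L^2(T_\ell)$ intertwines the Weyl group actions, and then use that both $L^2(\sWl)^W$ and $L^2(T_\ell)^W$ are the images of the same projection. Concretely, I would first verify the \emph{equivariance} of $\mathcal{F}$: for any $f\in L^2(\sWl)$ and any $\theta\in W$,
\begin{align*}
\mathcal{F}(\theta(f))(e^H)
&=\sum_{\kk\in\sWl} f(\theta^{-1}(\kk))\,e^{i\langle \kk,H\rangle}
=\sum_{\jj\in\sWl} f(\jj)\,e^{i\langle \theta(\jj),H\rangle}
=\sum_{\jj\in\sWl} f(\jj)\,e^{i\langle \jj,\theta^{-1}(H)\rangle}
=\theta(\mathcal{F}(f))(e^H),
\end{align*}
where the reindexing $\jj=\theta^{-1}(\kk)$ is legitimate because $W$ permutes $\sWl$ (it preserves $\sW$ and $\sRl$, since $\sRa$ and hence $\sRl=n_c\sRa$ is $W$-invariant by construction), and the third equality uses that $\langle\cdot,\cdot\rangle$ is $W$-invariant. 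Thus $\mathcal{F}\,\theta=\theta\,\mathcal{F}$ for every $\theta\in W$.

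Given equivariance, the rest is formal. Let $P=\frac{1}{|W|}\sum_{\theta\in W}\varepsilon(\theta)\,\theta$ act on either $L^2(\sWl)$ or $L^2(T_\ell)$; since $\varepsilon$ is a homomorphism and the actions are by unitaries, $P$ is the orthogonal projection onto the $\varepsilon$-isotypic subspace, i.e. onto $L^2(\sWl)^W$ in the first case and onto $L^2(T_\ell)^W$ in the second. (One checks $P^2=P$ and $P^*=P$ from $\varepsilon(\theta)^2=1$ and $\varepsilon(\theta^{-1})=\varepsilon(\theta)$; and $g\in\operatorname{ran}P$ iff $\theta(g)=\varepsilon(\theta)g$ for all $\theta$.) Equivariance gives $\mathcal{F}P=P\mathcal{F}$ as maps $L^2(\sWl)\to L^2(T_\ell)$. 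Therefore
$$\mathcal{F}(L^2(\sWl)^W)=\mathcal{F}(P\,L^2(\sWl))=P\,\mathcal{F}(L^2(\sWl))=P\,L^2(T_\ell)=L^2(T_\ell)^W,$$
using in the penultimate step that $\mathcal{F}$ is a unitary isomorphism of $L^2(\sWl)$ onto $L^2(T_\ell)$.

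The only point that requires a little care — and hence the main (though minor) obstacle — is confirming that the $W$-action genuinely descends to the finite quotient $\sWl=\sW/\sRl$, so that the reindexing in the equivariance computation and the definition of $\theta(f)$ make sense; this comes down to $W$-invariance of the lattice $\sRl$, which holds because $\sRa=\operatorname{span}_\Z\{\theta(\rr_+):\theta\in W\}$ is manifestly $W$-stable and $\sRl=n_c\sRa$. Everything else is the standard representation-theoretic fact that an equivariant unitary carries isotypic subspaces to isotypic subspaces, applied to the sign character of $W$.
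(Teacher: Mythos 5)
Your proof is correct and rests on the same key computation as the paper's: the reindexing of the sum over $\sWl$ together with $W$-invariance of the inner product, showing that $\mathcal{F}$ intertwines the $W$-actions. The only difference is cosmetic — you package the conclusion via the projection $P=\frac{1}{|W|}\sum_{\theta}\varepsilon(\theta)\theta$ onto the sign-isotypic subspace, whereas the paper checks both inclusions directly (the forward direction explicitly, the converse "by a similar computation" with $\mathcal{F}^{-1}$).
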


\begin{proof}
Suppose $f$ is an anti-symmetric function in $L^2(\sWl)$.
Then for any $e^H\in T_{\ell}$ and $\theta\in W$,
\begin{align*}
\theta(\mathcal{F}(f))(e^{H})&=\sum_{\kk\in \sWl} f(\kk) e^{i\langle \kk, \theta^{-1}(H)\rangle}
=\sum_{\kk\in \sWl} f(\kk) e^{i\langle \theta(\kk), H\rangle}\\
&=\sum_{\kk\in \sWl} f(\theta^{-1}(\kk)) e^{i\langle \kk, H\rangle}
=\sum_{\kk\in \sWl} \varepsilon(\theta)f(\kk) e^{i\langle \kk, H\rangle}\\
&=\varepsilon(\theta)\mathcal{F}(f)(e^H).
\end{align*}
Hence $\mathcal{F}(f)$ is anti-symmetric.

Conversely, if $g$ is an anti-symmetric function in $L^2(T_{\ell})$, then 
$\mathcal{F}^{-1}(g)$ is anti-symmetric by a similar computation. 
Therefore $\mathcal{F}$ is a unitary transformation from $L^2(\sWl)^W$ to $L^2(T_\ell)^W$.
\end{proof}

\section{Anti-symmetric $\ell$-characters}\label{Sec: Anti-symmetric Character}

In this section, we introduce anti-symmetric characters for a simple Lie algebra $\g$ and a level $\ell$. We construct a $C^*$-algebra of those characters which represents the Verlinde algebra of the corresponding quantum group. Therefore we call it the {\it $\ell$-character Verlinde algebra of $\g$ at level $\ell$.}

\begin{definition}
We say $\kk \in \sWl$ is in a mirror, if it is in
$M(\sWl):=\{\kk \in \sWl :  \theta_{\rr} (\kk)=\kk, \text{ for some } \rr\in \bR\}.$
We say $e^H\in T_{\ell}$ is in a mirror if it is in
$M(T_{\ell}):=\{e^H \in T_{\ell} :  e^{\theta_{\rr}(H)}=e^H, \text{ for some } \rr\in \bR\}.$
Furthermore,
$$\sW_{\ell, 0}=\sWl \backslash M(\sWl);$$
$$T_{\ell, 0}=T_{\ell} \backslash M(T_{\ell}).$$
\end{definition}

Note that the Weyl group $W$ action fixes $M(T_{\ell})$ and $M(\sWl)$, so it also fixes $\sW_{\ell, 0}$ and  $T_{\ell, 0}$.
Moreover, the action of $W$ is transitive on each orbit in $\sW_{\ell, 0}$ and  $T_{\ell, 0}$.
Recall that the alcove $\mathcal{C}_\ell$ is defined as
$$\mathcal{C}_\ell=\left\{\kk=\sum_{j=1}^{\rk} k_j\ww_j \in \mathcal{C}: k_j>0, j=1,\ldots, \rk, \langle \kk, H_{\rr_+} \rangle <n_c\right\}\subset\mathcal{C}.$$
In affine Lie algebras, it is known that $\mathcal{C}_\ell$ is a fundamental domain of  $\sW_{\ell, 0}$  under the action of $W$.

\begin{remark}
If the Lie algebra $\g=\mathfrak{sl}_n$, then $|\CL|$ is the binomial coefficient
$$|\CL|=  \binom{n_c-1}{n-1}.$$
\end{remark}

\begin{definition}[$\ell$-characters]
For any $\kk\in \sWl$, $e^H\in T_\ell$, we define $\hchi_\kk$ at $\kk$ by 
$$\hchi_{\kk}=\mathcal{F}(\sum_{\theta\in W} \varepsilon(\theta) \delta_{\theta(\kk)} ),$$
where $\delta_{\kk}$ is 1 on $\kk$ and 0 elsewhere.
Then
$$\hchi_\kk(e^{H})=\sum_{\theta\in W} \varepsilon(\theta) e^{i \langle \theta(\kk), H\rangle}.$$
We define the $\ell$-character $\wchi_{\kk}$ to be the restriction of $\hchi_\kk$ on $T_{\ell,0}$.
\end{definition}

\begin{proposition}\label{prop:asym}
The function $\hchi$ is anti-symmetric on $\sWl$ and $T_{\ell}$, namely, for any $\kk\in \sWl$, $\theta\in W$,
\begin{align*}
\hchi_{\theta(\kk)}&=\varepsilon(\theta) \hchi_{\kk}, \\
\theta(\hchi_\kk)&=\varepsilon(\theta)\hchi_{\kk}.
\end{align*}
Consequently, $\hchi_{\kk}$ is supported in $T_{\ell,0}$.
\end{proposition}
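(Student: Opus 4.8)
The plan is to verify the two anti-symmetry identities directly from the definition of $\hchi_\kk$ as a signed sum over the Weyl group, and then deduce the support statement from the second identity together with the structure of the mirrors. The key input is that $W$ acts on itself by left (and right) multiplication, permuting the summands, and that the sign character $\varepsilon$ is multiplicative.

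\textbf{Step 1: Anti-symmetry in the lower index.} Fix $\theta \in W$ and $\kk \in \sWl$. I would compute
\begin{align*}
\hchi_{\theta(\kk)}(e^H) &= \sum_{\sigma \in W} \varepsilon(\sigma)\, e^{i\langle \sigma\theta(\kk), H\rangle}.
\end{align*}
Substituting $\sigma = \tau\theta^{-1}$ (so $\tau$ runs over $W$ as $\sigma$ does) and using $\varepsilon(\tau\theta^{-1}) = \varepsilon(\tau)\varepsilon(\theta^{-1}) = \varepsilon(\tau)\varepsilon(\theta)$ (since $\varepsilon(\theta)^2 = 1$), the right-hand side becomes $\varepsilon(\theta)\sum_{\tau\in W}\varepsilon(\tau) e^{i\langle \tau(\kk),H\rangle} = \varepsilon(\theta)\hchi_\kk(e^H)$. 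This holds for every $e^H$, so $\hchi_{\theta(\kk)} = \varepsilon(\theta)\hchi_\kk$ as functions on $T_\ell$ (and the identical computation on $\delta$-functions gives the statement on $\sWl$).

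\textbf{Step 2: Anti-symmetry in the argument.} For $\theta \in W$ and $e^H \in T_\ell$, by definition $\theta(\hchi_\kk)(e^H) = \hchi_\kk(e^{\theta^{-1}(H)}) = \sum_{\sigma\in W}\varepsilon(\sigma)e^{i\langle \sigma(\kk), \theta^{-1}(H)\rangle}$. Since the inner product is $W$-invariant, $\langle \sigma(\kk), \theta^{-1}(H)\rangle = \langle \theta\sigma(\kk), H\rangle$; then reindex $\sigma = \theta^{-1}\tau$ and use multiplicativity of $\varepsilon$ exactly as in Step 1 to obtain $\varepsilon(\theta)\hchi_\kk(e^H)$. (Alternatively, this is immediate from Step 1 combined with Proposition~\ref{prop: anti}, since $\hchi_\kk = \mathcal{F}(\sum_\theta \varepsilon(\theta)\delta_{\theta(\kk)})$ and the function inside $\mathcal{F}$ is manifestly anti-symmetric on $\sWl$ by Step 1; but the direct computation is just as short.)

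\textbf{Step 3: Support in $T_{\ell,0}$.} Suppose $e^H \in M(T_\ell)$, i.e. $e^{\theta_\rr(H)} = e^H$ for some root $\rr \in \bR$. Taking $\theta = \theta_\rr$, which is a reflection so $\varepsilon(\theta_\rr) = -1$, Step 2 gives $\hchi_\kk(e^H) = \theta_\rr(\hchi_\kk)(e^H) = \hchi_\kk(e^{\theta_\rr^{-1}(H)}) = \hchi_\kk(e^{\theta_\rr(H)}) = \hchi_\kk(e^H)$ on one hand, and $\theta_\rr(\hchi_\kk)(e^H) = \varepsilon(\theta_\rr)\hchi_\kk(e^H) = -\hchi_\kk(e^H)$ on the other. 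Hence $\hchi_\kk(e^H) = -\hchi_\kk(e^H)$, so $\hchi_\kk(e^H) = 0$. Therefore $\hchi_\kk$ vanishes on $M(T_\ell)$, i.e. it is supported in $T_{\ell,0} = T_\ell \setminus M(T_\ell)$. This also re-proves, after restriction, that $\wchi_\kk$ carries all the information of $\hchi_\kk$.

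I do not anticipate a genuine obstacle here; the only mild subtlety is being careful that the reindexing $\sigma \mapsto \tau\theta^{-1}$ is a bijection of $W$ and that $\varepsilon$ is a genuine homomorphism $W \to \{\pm 1\}$ with $\theta_\rr \mapsto -1$ for every reflection, both of which are standard. One should also note that the computation $e^{\theta_\rr^{-1}(H)} = e^{\theta_\rr(H)}$ in Step 3 uses $\theta_\rr^2 = \mathrm{id}$.
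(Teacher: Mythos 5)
Your proof is correct and follows essentially the same route as the paper: the paper's one-line argument observes that $\sum_{\theta\in W}\varepsilon(\theta)\delta_{\theta(\kk)}$ is manifestly anti-symmetric on $\sWl$ (your Step 1) and then invokes Proposition~\ref{prop: anti} for anti-symmetry on $T_\ell$, which is exactly the alternative you note in Step 2 (your direct computation there is just the proof of Proposition~\ref{prop: anti} unpacked for this particular function). Your Step 3 is likewise the same reflection argument the paper records separately as Corollary~\ref{cor: mirror}.
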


\begin{proof}
By the definition of $\hchi$, it is anti-symmetric on $\sWl$.
By Proposition \ref{prop: anti}, $\hchi$ is anti-symmetric on $T_{\ell}$.
\end{proof}

\begin{corollary}\label{cor: mirror}
If $\kk$ is in a mirror, then $\hchi_\kk=0$.
For any $\kk\in \sWl$, if $e^H$ is in a mirror, then $\hchi_\kk(e^H)=0$.
\end{corollary}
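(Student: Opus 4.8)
The plan is to derive both statements directly from the anti-symmetry established in Proposition \ref{prop:asym}, using only two elementary facts about a reflection $\theta_\rr$, $\rr\in\bR$: it has sign $\varepsilon(\theta_\rr)=-1$, and it is an involution, $\theta_\rr^{-1}=\theta_\rr$.

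For the first claim, suppose $\kk$ lies in a mirror, so $\theta_\rr(\kk)=\kk$ for some $\rr\in\bR$. Applying the first identity of Proposition \ref{prop:asym} with $\theta=\theta_\rr$ gives $\hchi_\kk=\hchi_{\theta_\rr(\kk)}=\varepsilon(\theta_\rr)\hchi_\kk=-\hchi_\kk$, whence $2\hchi_\kk=0$ and $\hchi_\kk=0$. (Since $\hchi_\kk$ takes values in $\mathbb{C}$, there is no characteristic issue.)

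For the second claim, fix any $\kk\in\sWl$ and suppose $e^H\in T_\ell$ lies in a mirror, so $e^{\theta_\rr(H)}=e^H$ for some $\rr\in\bR$. By the second identity of Proposition \ref{prop:asym}, $\theta_\rr(\hchi_\kk)=\varepsilon(\theta_\rr)\hchi_\kk=-\hchi_\kk$. Evaluating both sides at $e^H$ and using the definition $\theta(g)(e^H)=g(e^{\theta^{-1}(H)})$ together with $\theta_\rr^{-1}=\theta_\rr$, the left side is $\hchi_\kk(e^{\theta_\rr(H)})=\hchi_\kk(e^H)$, while the right side is $-\hchi_\kk(e^H)$. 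Hence $\hchi_\kk(e^H)=-\hchi_\kk(e^H)$, so $\hchi_\kk(e^H)=0$.

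I do not anticipate any real obstacle here: the statement is a formal consequence of the sign-equivariance of $\hchi$ under $W$, and the only points worth spelling out are that a reflection has sign $-1$ and squares to the identity, so that its fixed points force the corresponding value (or the whole function) to vanish. One could alternatively phrase the first claim as a special case of the second by evaluating $\hchi_\kk$ on $T_{\ell,0}$, but the two-line argument above is cleaner.
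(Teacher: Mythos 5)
Your proof is correct and follows essentially the same route as the paper: both parts are obtained by applying the anti-symmetry of Proposition \ref{prop:asym} with $\theta=\theta_\rr$ and using that a reflection has sign $-1$, forcing the fixed value (or the whole function) to vanish. Your version merely spells out the involution step $\theta_\rr^{-1}=\theta_\rr$ a bit more explicitly than the paper does.
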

\begin{proof}
By Proposition \ref{prop:asym}, if $\theta_\rr(\kk)=\kk$ for some $\rr\in \bR$, then
$$\hchi_{\kk}=\hchi_{\theta_\rr(\kk)}=\varepsilon(\theta_{\rr})\hchi_{\kk}=-\hchi_{\kk},$$
and we obtain $\hchi_{\kk}=0$.
For any $\kk\in \sWl$ and $e^{\theta_{\rr}(H)}=e^H$ for some $\rr\in \bR$, we have
$$\hchi_{\kk}(e^H)=\hchi_{\kk}(e^{\theta_{\rr}(H)})=-\hchi_{\kk}(e^H),$$
and hence $\hchi_{\kk}(e^H)=0$.
\end{proof}

\begin{theorem}
The set $\{|W|^{-\frac{1}{2}}\hchi_{\kk}\}_{\kk\in\mathcal{C}_\ell}$ forms an orthonormal basis (ONB) of $L^2(T_\ell)^W$.
In particular, $$\text{dim }L^2(T_\ell)^W=|\mathcal{C}_\ell|.$$
\end{theorem}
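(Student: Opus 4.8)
The plan is to produce an explicit orthonormal set of the claimed cardinality that spans $L^2(T_\ell)^W$. The starting point is the previous proposition (Proposition \ref{prop: anti}), which tells us that $\mathcal{F}$ restricts to a unitary from $L^2(\sWl)^W$ onto $L^2(T_\ell)^W$; hence it suffices to exhibit an ONB of $L^2(\sWl)^W$ with $|\mathcal{C}_\ell|$ elements, and push it through $\mathcal{F}$. A natural candidate basis of $L^2(\sWl)$ is the delta functions $\{\delta_\kk\}_{\kk\in\sWl}$; the anti-symmetrization operator $P = \tfrac{1}{|W|}\sum_{\theta\in W}\varepsilon(\theta)\,\theta$ is the orthogonal projection of $L^2(\sWl)$ onto $L^2(\sWl)^W$, and $P(\delta_\kk) = \tfrac{1}{|W|}\sum_{\theta\in W}\varepsilon(\theta)\,\delta_{\theta(\kk)}$. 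Note $\mathcal{F}\bigl(\sum_{\theta}\varepsilon(\theta)\delta_{\theta(\kk)}\bigr) = \hchi_\kk$ by definition, so $\mathcal{F}(P(\delta_\kk)) = |W|^{-1}\hchi_\kk$.

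First I would analyze when $P(\delta_\kk)$ is nonzero and how these functions overlap. If $\kk$ lies in a mirror, i.e. $\theta_\rr(\kk)=\kk$ for some reflection $\theta_\rr\in W$, then applying $\varepsilon(\theta_\rr) = -1$ forces $P(\delta_\kk)=0$ (equivalently $\hchi_\kk = 0$, as recorded in Corollary \ref{cor: mirror}). If $\kk\in\sW_{\ell,0}$, then its $W$-stabilizer is trivial, so the orbit $W\cdot\kk$ has exactly $|W|$ distinct elements and $P(\delta_\kk) = \tfrac{1}{|W|}\sum_{\theta\in W}\varepsilon(\theta)\delta_{\theta(\kk)}$ is a sum of $|W|$ delta functions with $\pm1$ coefficients; its $L^2$-norm squared is $|W|\cdot|W|^{-2} = |W|^{-1}$, so $\|\hchi_\kk\|^2 = |W|$ and $|W|^{-1/2}\hchi_\kk$ has norm $1$. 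Moreover, for $\kk,\kk'\in\sW_{\ell,0}$ in the same $W$-orbit we have $\hchi_{\kk'} = \varepsilon(\theta)\hchi_\kk$ (Proposition \ref{prop:asym}), and for $\kk,\kk'$ in different $W$-orbits the supports $W\cdot\kk$ and $W\cdot\kk'$ are disjoint, so $\langle\hchi_\kk,\hchi_{\kk'}\rangle = 0$. Thus $\{|W|^{-1/2}\hchi_\kk\}$, as $\kk$ ranges over a set of orbit representatives in $\sW_{\ell,0}$, is an orthonormal system.

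Next I would identify the orbit representatives with $\mathcal{C}_\ell$. Since $W$ fixes $\sW_{\ell,0}$ and acts transitively on each orbit (stated in the excerpt after the definition of $\sW_{\ell,0}$), and since (also stated there, from affine Lie algebra theory) the alcove $\mathcal{C}_\ell$ is a fundamental domain for the $W$-action on $\sW_{\ell,0}$, the set $\{|W|^{-1/2}\hchi_\kk\}_{\kk\in\mathcal{C}_\ell}$ is an orthonormal system of size $|\mathcal{C}_\ell|$ inside $L^2(T_\ell)^W$. It remains to show it spans. For this I would argue that the $P(\delta_\kk)$, as $\kk$ ranges over all of $\sWl$, span $L^2(\sWl)^W$ — indeed $P$ is surjective onto $L^2(\sWl)^W$ and the $\delta_\kk$ span $L^2(\sWl)$, so their images span. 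Each nonzero $P(\delta_\kk)$ is, up to sign, one of the $P(\delta_{\kk'})$ with $\kk'\in\mathcal{C}_\ell$ (the mirror ones vanish, the rest reduce to an alcove representative by $W$-equivariance), so $\{P(\delta_\kk)\}_{\kk\in\mathcal{C}_\ell}$ already spans $L^2(\sWl)^W$; applying the unitary $\mathcal{F}$ and rescaling gives that $\{|W|^{-1/2}\hchi_\kk\}_{\kk\in\mathcal{C}_\ell}$ spans $L^2(T_\ell)^W$, hence is an ONB, and $\dim L^2(T_\ell)^W = |\mathcal{C}_\ell|$.

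The main obstacle is not any single computation but making sure the two "well-known" inputs are used cleanly: that the $W$-stabilizer of every $\kk\in\sW_{\ell,0}$ is trivial (so that the orbit has full size $|W|$ and the norm computation works) and that $\mathcal{C}_\ell$ is exactly a fundamental domain for $W$ acting on $\sW_{\ell,0}$ (so the count is $|\mathcal{C}_\ell|$ with no over- or under-counting). Both are standard facts about the affine Weyl group $\widehat W = W_t\rtimes W$ acting on $\mathfrak{t}$, restricted to the finite quotient, and I would cite them rather than reprove them; the rest is the orthogonality/spanning bookkeeping sketched above.
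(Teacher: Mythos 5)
Your proposal is correct and follows the same route as the paper: the paper's proof simply asserts that the antisymmetrized delta functions $\{|W|^{-1/2}\sum_{\theta\in W}\varepsilon(\theta)\delta_{\theta(\kk)}\}_{\kk\in\mathcal{C}_\ell}$ form an ONB of $L^2(\sWl)^W$ and transports this through the unitary $\mathcal{F}$ via Proposition \ref{prop: anti}. You fill in the bookkeeping (vanishing on mirrors, free orbits, fundamental domain, spanning) that the paper leaves implicit, relying on the same standard affine Weyl group facts the paper cites.
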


\begin{proof}
Note that 
$\{ |W|^{-1/2} \sum_{\theta\in W} \varepsilon(\theta) \delta_{\theta(\kk)} \}_{\kk \in \CL}$
form an orthonormal basis of $L^2(\sWl)^{W}$.
By the definition of $\hchi$ and Proposition \ref{prop: anti}, 
$\{|W|^{-\frac{1}{2}}\hchi_{\kk}\}_{\kk\in\mathcal{C}_\ell}$ forms an orthonormal basis of $L^2(T_\ell)^W$.
\end{proof}

By anti-symmetry, any function in $L^2(T_\ell)^W$ is supported in $T_{\ell,0}$. So $L^2(T_\ell)^W \cong L^2(T_{\ell,0})^W$.
\begin{definition}
Define the space $L^2(T_{\ell,0})^W$ as anti-symmetric functions on $T_{\ell,0}$:
$$L^2(T_{\ell,0})^W=\{g\in L^2(T_{\ell,0}) : \theta(g)=\varepsilon(\theta) g, \ \forall \theta\in W\}.$$
\end{definition}

\begin{corollary}\label{cor: basis}
The set of multiples of $\ell$-characters $\{|W|^{-\frac{1}{2}}\wchi_{\kk}\}_{\kk\in\mathcal{C}_\ell}$ forms an orthonormal basis of $L^2(T_{\ell,0})^W$.
\end{corollary}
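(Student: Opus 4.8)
The plan is to deduce Corollary~\ref{cor: basis} directly from the preceding theorem together with the observation, already noted in the excerpt, that every anti-symmetric function on $T_\ell$ vanishes on the mirror set $M(T_\ell)$, so that restriction to $T_{\ell,0}$ is a bijection $L^2(T_\ell)^W \to L^2(T_{\ell,0})^W$. Concretely, first I would record that the restriction map $\rho : L^2(T_\ell)^W \to L^2(T_{\ell,0})^W$, $g \mapsto g|_{T_{\ell,0}}$, is well-defined: if $\theta(g) = \varepsilon(\theta) g$ on all of $T_\ell$, the same identity holds on the $W$-invariant subset $T_{\ell,0}$, using that $W$ permutes $T_{\ell,0}$. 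Next I would argue $\rho$ is injective: by Corollary~\ref{cor: mirror} (or directly by the anti-symmetry argument in Proposition~\ref{prop:asym}), any $g \in L^2(T_\ell)^W$ satisfies $g(e^H) = 0$ whenever $e^H \in M(T_\ell) = T_\ell \setminus T_{\ell,0}$, so $g$ is determined by its values on $T_{\ell,0}$.

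The next step is surjectivity of $\rho$: given $h \in L^2(T_{\ell,0})^W$, extend it by zero on $M(T_\ell)$ to get a function $\tilde h$ on $T_\ell$; I must check $\tilde h \in L^2(T_\ell)^W$. For $e^H \in T_{\ell,0}$ the anti-symmetry relation for $\tilde h$ is inherited from $h$, while for $e^H \in M(T_\ell)$ both sides of $\theta(\tilde h)(e^H) = \varepsilon(\theta)\tilde h(e^H)$ vanish because $W$ preserves $M(T_\ell)$, so $e^{\theta^{-1}(H)}$ is again in $M(T_\ell)$. Hence $\rho$ is a bijection. Moreover it is isometric once we equip $T_{\ell,0}$ with the restriction of the (normalized counting/Haar) measure on $T_\ell$, since the complement $M(T_\ell)$ contributes nothing to the $L^2$ norm of an anti-symmetric function; this is exactly the identification $L^2(T_\ell)^W \cong L^2(T_{\ell,0})^W$ already asserted in the text just before the definition of $L^2(T_{\ell,0})^W$.

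Finally, I would combine this with the theorem stating that $\{|W|^{-1/2}\hchi_\kk\}_{\kk \in \mathcal{C}_\ell}$ is an orthonormal basis of $L^2(T_\ell)^W$. Since $\wchi_\kk$ is by definition the restriction of $\hchi_\kk$ to $T_{\ell,0}$, i.e. $\wchi_\kk = \rho(\hchi_\kk)$, and $\rho$ is a unitary isomorphism, the image $\{|W|^{-1/2}\wchi_\kk\}_{\kk \in \mathcal{C}_\ell}$ is an orthonormal basis of $L^2(T_{\ell,0})^W$. This gives the claim, and in particular reconfirms $\dim L^2(T_{\ell,0})^W = |\mathcal{C}_\ell|$.

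I do not anticipate a genuine obstacle here: the statement is essentially a bookkeeping corollary, and the only point requiring a modicum of care is making the measure-theoretic identification precise — namely checking that discarding the mirror set is harmless for both the inner product and the anti-symmetry condition, which is where one uses that $W$ fixes $T_{\ell,0}$ setwise and acts transitively on each of its orbits. Everything else is a formal transport of structure along the bijection $\rho$.
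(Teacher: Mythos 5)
Your argument is correct and is essentially the paper's own route: the paper notes in one line that anti-symmetric functions on $T_\ell$ are supported in $T_{\ell,0}$, so $L^2(T_\ell)^W \cong L^2(T_{\ell,0})^W$ via restriction, and then transports the orthonormal basis $\{|W|^{-\frac{1}{2}}\hchi_{\kk}\}_{\kk\in\mathcal{C}_\ell}$ to $\{|W|^{-\frac{1}{2}}\wchi_{\kk}\}_{\kk\in\mathcal{C}_\ell}$. You have simply spelled out the well-definedness, injectivity, surjectivity and isometry of that restriction map, which the paper leaves implicit.
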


Note that $\hchi_\rho$ is the restriction of the Weyl denominator on $T_{\ell}$.
The following result is well-known in Lie theory: 
$$\hchi_\rho(e^H)=
e^{i\langle \rho, H \rangle} \prod_{\rr\in \bR_+} \left(1-e^{-i\langle \rr, H\rangle}\right)=
\prod_{\rr\in \bR_+} \left(e^{i\langle \rr, H\rangle/2}-e^{-i\langle \rr, H\rangle/2}\right),$$
for any $e^H\in T_\ell$, see \cite{Kac90}. Consequently, one has the following result:

\begin{corollary}
For any $e^H \in T_{\ell}$, $\hchi_\rho(e^H)\neq0$ iff $e^H\in T_{\ell,0}$. Equivalently, $\wchi_{\rho}$ is invertible in $L^2(T_{\ell,0})$.
\end{corollary}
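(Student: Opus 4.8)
The plan is to deduce everything from the product formula for $\hchi_\rho$ that is stated just before the corollary, together with the description of $T_{\ell,0}$ as the complement of the mirrors $M(T_\ell)$. First I would observe that
$$\hchi_\rho(e^H)=\prod_{\rr\in \bR_+}\left(e^{i\langle \rr,H\rangle/2}-e^{-i\langle \rr,H\rangle/2}\right),$$
so $\hchi_\rho(e^H)=0$ if and only if $e^{i\langle \rr,H\rangle/2}=e^{-i\langle \rr,H\rangle/2}$ for some positive root $\rr$, i.e. $e^{i\langle \rr,H\rangle}=1$ for some $\rr\in\bR_+$. The point is then to identify this vanishing locus with $M(T_\ell)$: one direction is that $\langle \rr,H\rangle\in 2\pi\Z$ forces $e^{\theta_\rr(H)}=e^{H-\langle H,H_\rr\rangle\rr}$... — more precisely, I would use that $e^H$ lies on a reflection hyperplane for $\theta_\rr$ exactly when $e^{\theta_\rr(H)}=e^H$, and translate that fixed-point condition into $\langle \rr,H\rangle\in 2\pi\Z$ (using the pairing between $\rr$ and the coroot $H_\rr$, and the normalization of the inner product). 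This gives $\hchi_\rho(e^H)=0 \iff e^H\in M(T_\ell)$, hence $\hchi_\rho(e^H)\neq 0 \iff e^H\in T_{\ell,0}$.

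Second, for the equivalence with invertibility of $\wchi_\rho$: by Corollary \ref{cor: mirror} (applied with $\kk=\rho$) we already know $\hchi_\rho$ vanishes on the mirrors, and $\wchi_\rho$ is by definition the restriction of $\hchi_\rho$ to $T_{\ell,0}$. So the claim "$\hchi_\rho(e^H)\neq 0$ for all $e^H\in T_{\ell,0}$" is literally the statement that the function $\wchi_\rho\in L^2(T_{\ell,0})$ is nowhere zero on the finite set $T_{\ell,0}$, which is the same as saying it has a pointwise multiplicative inverse in $L^2(T_{\ell,0})$ (recall $T_\ell$, hence $T_{\ell,0}$, is a finite set, so $L^2(T_{\ell,0})$ with pointwise multiplication is just a finite product of copies of $\mathbb{C}$, and an element is invertible iff none of its coordinates vanish). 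So the two assertions in the corollary are equivalent essentially by unwinding definitions.

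I expect the only genuine content — and the main obstacle — to be the clean identification of the zero set of the product $\prod_{\rr\in\bR_+}(e^{i\langle\rr,H\rangle/2}-e^{-i\langle\rr,H\rangle/2})$ with $M(T_\ell)$, i.e.\ checking that "$\exists\,\rr\in\bR_+$ with $\langle\rr,H\rangle\in 2\pi\Z$" is the same condition as "$\exists\,\rr\in\bR$ with $e^{\theta_\rr(H)}=e^H$". The forward direction needs care: from $\langle \rr,H\rangle\in 2\pi\Z$ one has $\theta_\rr(H)=H-\langle H,H_\rr\rangle\,\rr$, and one must argue $\langle H,H_\rr\rangle\,\rr$ lies in the kernel lattice $2\pi\mathfrak{R}$ of the exponential on $\mathfrak{t}$ (using the identification $\mathfrak{t}/\mathfrak{R}\cong T$ after rescaling by $2\pi$, plus $\langle\rr,H\rangle=\langle H_\rr,H\rangle$ up to the normalization so that this is an integer multiple of $2\pi$); the converse direction is similar. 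This is a standard fact in the theory of compact Lie groups (walls of alcoves $\leftrightarrow$ reflection hyperplanes mod the lattice), so I would either cite it (e.g.\ \cite{Kac90}) or spell out the one-line lattice computation, and then the corollary follows immediately.
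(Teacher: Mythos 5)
Your argument is correct and is exactly the route the paper intends: the paper offers no separate proof, deriving the corollary directly from the Weyl denominator product formula by identifying its zero set with the mirrors $M(T_\ell)$, and the invertibility statement is just pointwise non-vanishing on the finite set $T_{\ell,0}$. Your extra care with the lattice computation (that $\langle \rr,H\rangle\in 2\pi\Z$ is equivalent to $e^{\theta_\rr(H)}=e^H$, using that each coroot is a primitive vector of $\mathfrak{R}$) correctly fills in the one step the paper leaves implicit.
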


Recall that $\wchi$ is the restriction of $\hchi$ on $T_{\ell,0}$. 
\begin{definition}
We define the multiplication $\star$ of $\wchi_\kk$ and $\wchi_\jj$ for any $\kk, \jj\in \sWl$ to be 
$$\wchi_\kk \star\wchi_\jj=\displaystyle \frac{\wchi_\kk\wchi_\jj}{\wchi_\rho}.$$
\end{definition}
\noindent Then $\wchi_\rho$ is the identity under this multiplication.
Recall that the fusion coefficients of the representations of $\g$ is given in Equation \eqref{Equ:fusion}.
\begin{definition}\label{def: Nkjs}
For $\kk,\jj,\ssb \in \CL$, we define the fusion coefficient $\tilde{N}_{\kk, \jj}^\ssb$ as
\begin{equation}
\tilde{N}_{\kk, \jj}^\ssb=\sum_{\ssb' -\rho \in \mathcal{C}, \theta \in W \atop \theta(\ssb')-\ssb\in \sRl} \varepsilon(\theta) N_{\kk-\rho, \jj-\rho}^{\ssb'-\rho}.
\end{equation}
\end{definition}

\begin{remark}
By Equation \eqref{Equ: quantum fusion}, the fusion rule of representations of quantum $\g$ at level $\ell$ is given by 
$$V_{\kk-\rho} \otimes V_{\jj-\rho} =\bigoplus_{\ssb \in \CL } \tilde{N}_{\kk, \jj}^\ssb V_{\ssb-\rho} .$$
Consequently, $\tilde{N}_{\kk,\jj}^{\ssb}\in \mathbb{N}$.
\end{remark}

\begin{theorem}\label{thm:fusionrule}
For any $\kk,\jj,\ssb \in \CL$, 
$$\wchi_{\kk} \star \wchi_{\jj}=\sum_{\ssb \in \mathcal{C}_{\ell}}  \tilde{N}_{\kk,\jj}^{\ssb} \wchi_{\ssb}.$$
Equivalently, 
$$\tilde{N}_{\kk,\jj}^{\ssb}=\frac{1}{|W|}\langle \wchi_{\kk} \star \wchi_{\jj}, \wchi_{\ssb}\rangle.$$
\end{theorem}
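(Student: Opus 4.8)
The plan is to reduce the $\star$-product statement to the ordinary character fusion rule \eqref{Equ:fusion} together with the Fourier/anti-symmetry machinery already in place. First I would observe that on $T_{\ell,0}$ the identity $\wchi_{\kk}\star\wchi_{\jj}=\wchi_{\kk}\wchi_{\jj}/\wchi_{\rho}$ is equivalent to $\wchi_{\kk}\wchi_{\jj}=\wchi_{\rho}\,(\wchi_{\kk}\star\wchi_{\jj})$, and since $\wchi_\rho$ is invertible in $L^2(T_{\ell,0})$ it suffices to prove
\begin{equation*}
\wchi_{\kk}\,\wchi_{\jj}=\sum_{\ssb\in\CL}\tilde N_{\kk,\jj}^{\ssb}\,\wchi_{\rho}\,\wchi_{\ssb}
\end{equation*}
pointwise on $T_{\ell,0}$ (equivalently on all of $T_\ell$, since both sides vanish off $T_{\ell,0}$ by Proposition \ref{prop:asym} and Corollary \ref{cor: mirror}). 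The key algebraic input is the ``Weyl-numerator times Weyl-numerator'' identity: for $\kk,\jj\in\CL$ one has, by the Weyl character formula applied to $V_{\kk-\rho}\otimes V_{\jj-\rho}$ on the Lie group $K$,
\begin{equation*}
\hchi_{\kk}\,\hchi_{\jj}=\hchi_{\rho}\sum_{\ssb'-\rho\in\mathcal{C}}N_{\kk-\rho,\jj-\rho}^{\ssb'-\rho}\,\hchi_{\ssb'}
\end{equation*}
as functions on the full torus $T$; restricting this to $T_\ell$ and then to $T_{\ell,0}$ gives an identity among $\ell$-characters.

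Second, I would handle the passage from the (infinite) sum over $\ssb'$ with $\ssb'-\rho\in\mathcal{C}$ to the finite sum over $\ssb\in\CL$. On $T_\ell$ the function $\hchi_{\ssb'}$ depends only on the class of $\ssb'$ in $\sWl=\sW/\sRl$, so $\hchi_{\ssb'}=\hchi_{\ssb'+\rr}$ for $\rr\in\sRl$. Combined with the anti-symmetry $\hchi_{\theta(\ssb')}=\varepsilon(\theta)\hchi_{\ssb'}$ from Proposition \ref{prop:asym}, this means that each term $\hchi_{\ssb'}$ restricted to $T_\ell$ equals $\varepsilon(\theta)\hchi_{\ssb}$ whenever $\theta(\ssb')-\ssb\in\sRl$ for some $\theta\in W$ and some $\ssb\in\CL$ — i.e. whenever $\ssb'$ projects, up to the affine Weyl group $\widehat W=W_t\rtimes W$, into the alcove — and $\hchi_{\ssb'}$ restricted to $T_\ell$ vanishes when $\ssb'$ lies on a mirror wall. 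Collecting terms according to their representative $\ssb\in\CL$ then turns $\sum_{\ssb'}N_{\kk-\rho,\jj-\rho}^{\ssb'-\rho}\hchi_{\ssb'}$ into $\sum_{\ssb\in\CL}\big(\sum_{\ssb'-\rho\in\mathcal{C},\,\theta\in W,\ \theta(\ssb')-\ssb\in\sRl}\varepsilon(\theta)N_{\kk-\rho,\jj-\rho}^{\ssb'-\rho}\big)\hchi_{\ssb}$, and the inner sum is exactly $\tilde N_{\kk,\jj}^{\ssb}$ of Definition \ref{def: Nkjs}. This is precisely the same bookkeeping that underlies Equation \eqref{Equ: quantum fusion}, so it should be citable in that form.

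Third, I would assemble the pieces: dividing the resulting identity $\hchi_{\kk}\hchi_{\jj}=\hchi_{\rho}\sum_{\ssb\in\CL}\tilde N_{\kk,\jj}^{\ssb}\hchi_{\ssb}$ by $\hchi_{\rho}$ on $T_{\ell,0}$ (legitimate by the preceding corollary) yields $\wchi_{\kk}\star\wchi_{\jj}=\sum_{\ssb\in\CL}\tilde N_{\kk,\jj}^{\ssb}\wchi_{\ssb}$, which is the first displayed formula. For the second formula I would take the inner product of both sides against $\wchi_{\ssb}$ in $L^2(T_{\ell,0})^W$ and invoke Corollary \ref{cor: basis}, which says $\{|W|^{-1/2}\wchi_{\ssb}\}_{\ssb\in\CL}$ is an orthonormal basis; orthogonality immediately extracts $\tilde N_{\kk,\jj}^{\ssb}=|W|^{-1}\langle\wchi_{\kk}\star\wchi_{\jj},\wchi_{\ssb}\rangle$.

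The main obstacle I anticipate is the second step — making the reduction of the folded, alternating sum over dominant weights to the alcove sum fully rigorous, in particular checking that the restriction-to-$T_\ell$ map really does send $\hchi_{\ssb'}$ to $\pm\hchi_{\ssb}$ with the sign $\varepsilon(\theta)$ consistent with the $\widehat W$-action, and that the weights $\ssb'$ landing on walls (which contribute $0$) are correctly excluded by the mirror condition rather than silently dropped. Everything else is either the Weyl character formula, bilinearity, or a direct appeal to the orthonormal basis in Corollary \ref{cor: basis}; the only genuine content beyond \eqref{Equ:fusion} and \eqref{Equ: quantum fusion} is the interplay between periodicity modulo $\sRl$ and the Weyl-group anti-symmetry of $\hchi$, which Proposition \ref{prop:asym} and Corollary \ref{cor: mirror} are designed to supply.
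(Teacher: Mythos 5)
Your proposal is correct and follows essentially the same route as the paper: both rest on the Weyl character formula identity $\wchi_{\kk}/\wchi_{\rho}=\chi_{\kk-\rho}$ on $T_{\ell,0}$, the classical fusion rule \eqref{Equ:fusion}, and the folding of the (finite) sum over dominant $\ssb'$ into the alcove via the anti-symmetry of Proposition \ref{prop:asym} and the periodicity modulo $\sRl$, with wall terms killed by Corollary \ref{cor: mirror}. The second formula then follows from the orthonormal basis of Corollary \ref{cor: basis}, exactly as you say.
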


\begin{proof}
Note that $\displaystyle \frac{\wchi_{\kk}}{\wchi_{\rho}}=\chi_{\kk-\rho}$ on $T_{\ell, 0}$.
Then in $L^2(T_{\ell,0})$, one has
\begin{align*}
\wchi_\kk\star\wchi_\jj&=\frac{\wchi_\kk \wchi_\jj }{\wchi_\rho}
= \chi_{\kk-\rho} \chi_{\jj-\rho} \wchi_\rho \\
&= \sum_{\ssb'-\rho\in\mathcal{C}}N_{\kk-\rho, \jj-\rho}^{\ssb'-\rho}\chi_{\ssb'-\rho} \wchi_\rho \\
&= \sum_{\ssb'-\rho\in\mathcal{C}}N_{\kk-\rho, \jj-\rho}^{\ssb'-\rho}\wchi_{\ssb'} \\
&= \sum_{\ssb\in\CL}\tilde{N}_{\kk, \jj}^\ssb\wchi_{\ssb} .
\end{align*}
The last equality uses the anti-symmetry established in Proposition \ref{prop:asym}, $\wchi_{\ssb'}=\varepsilon(\theta) \wchi_{\ssb}$, when 
$\theta \in W$ and $\theta(\ssb')-\ssb\in \sRl$.
\end{proof}

Recall that $\rr_1,\cdots, \rr_{\rk}$ are simple roots in $\bR$.
Then $\{-\rr_1,\cdots, -\rr_{\rk}\}$ is also a set of simple roots. 
So there is an element $\Omega \in W$, such that $\Omega(\rho)=-\rho$.  
Moreover, $\Omega(\mathcal{C}_\ell)=-\mathcal{C}_\ell$.

\begin{remark}
If the Lie algebra $\g$ is $\ssl_n$, we have for any $\kk\in \CL$,
$$\varepsilon(\Omega)=(-1)^{\frac{n(n-1)}{2}}.$$
\end{remark}

\begin{definition}
Define an involution $*: \kk\mapsto \Omega(-\kk)$ on $\mathcal{C}_\ell$.
Then it is well defined on $\sWl$, and $\rho^*=\rho$.
\end{definition}

For any $\kk\in \sWl$, by the definition of $\wchi$ and its anti-symmetry, we have
\begin{align}\label{Equ: omega}
\wchi_{\kk^*}=\wchi_{\Omega(-\kk)}=\varepsilon(\Omega)\wchi_{-\kk}=\varepsilon(\Omega) \overline{\wchi_{\kk}}.
\end{align}
The involution * on $\CL$ induces an involution on $L^2(T_{\ell,0})^W$:
$$\wchi_{\kk}^*:=\wchi_{\kk^*}, ~\kk \in \CL.$$

\begin{proposition}
For any $\kk,\jj \in \CL$, 
$\tilde{N}_{\kk,\jj}^{\rho}=1$ iff $\kk=\jj^*$.
Moreover, * induces an anti-isomorphism on $L^2(T_{\ell,0})^W$ with multiplication $\star$. 
\end{proposition}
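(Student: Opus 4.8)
The plan is to prove the two assertions separately, in each case reducing the statement about the multiplication $\star$ to a known fact about the ordinary fusion coefficients $N$ of the representations of $\g$, via Theorem \ref{thm:fusionrule} and Definition \ref{def: Nkjs}. For the first claim, recall that by Theorem \ref{thm:fusionrule} we have $\tilde N_{\kk,\jj}^{\rho}=\frac{1}{|W|}\langle \wchi_\kk \star \wchi_\jj, \wchi_\rho\rangle$, and since $\wchi_\rho$ is the identity for $\star$, this equals $\frac{1}{|W|}\langle \wchi_\kk, \overline{\wchi_\jj}\rangle$ up to the $\wchi_\rho$ bookkeeping; more directly, $\tilde N_{\kk,\jj}^{\rho}$ is, by Definition \ref{def: Nkjs} and the remark following it, the multiplicity of the trivial representation $V_0=V_{\rho-\rho}$ in $V_{\kk-\rho}\otimes V_{\jj-\rho}$ of the quantum group at level $\ell$. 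For a semisimple (unitary) tensor category, the trivial object appears in $X\otimes Y$ with multiplicity $\dim\mathrm{Hom}(\mathbf 1, X\otimes Y)=\dim\mathrm{Hom}(X^*, Y)$, which is $1$ if $Y\cong X^*$ and $0$ otherwise. So I would invoke that $V_{\kk-\rho}^*=V_{(\kk-\rho)^*}$ in the quantum group, identify the dual highest weight, and match it with the involution $*$ on $\CL$: one checks $(\kk-\rho)^* = \kk^* - \rho$ where $\kk^*=\Omega(-\kk)$, using $\Omega(\rho)=-\rho$. Hence $\tilde N_{\kk,\jj}^\rho = \delta_{\jj,\kk^*}$, giving the first claim.

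Alternatively — and this may be the cleaner route to put in the paper — I would argue directly at the level of $\ell$-characters using the orthogonality of the ONB $\{|W|^{-1/2}\wchi_\ssb\}_{\ssb\in\CL}$ from Corollary \ref{cor: basis}. Compute
\begin{align*}
|W|\,\tilde N_{\kk,\jj}^{\rho} &= \langle \wchi_\kk \star \wchi_\jj, \wchi_\rho\rangle = \Big\langle \frac{\wchi_\kk\wchi_\jj}{\wchi_\rho}, \wchi_\rho\Big\rangle = \langle \wchi_\kk, \overline{\wchi_\jj}\rangle,
\end{align*}
where the last step uses that $\wchi_\rho$ is real-valued on $T_{\ell,0}$ up to sign — actually one should be slightly careful, since $\wchi_\rho$ is a product of factors $e^{i\langle\rr,H\rangle/2}-e^{-i\langle\rr,H\rangle/2}$, so $\overline{\wchi_\rho}=\varepsilon(\Omega)\wchi_\rho$ by Equation \eqref{Equ: omega} with $\kk=\rho$; this scalar cancels between numerator and denominator of $\overline{\wchi_\kk\wchi_\jj/\wchi_\rho}$ in the inner product and one still lands on $\langle \wchi_\kk,\overline{\wchi_\jj}\rangle$. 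Then by Equation \eqref{Equ: omega}, $\overline{\wchi_\jj}=\varepsilon(\Omega)\wchi_{-\jj}=\wchi_{\Omega(-\jj)}=\wchi_{\jj^*}$, so $|W|\,\tilde N_{\kk,\jj}^\rho=\langle \wchi_\kk,\wchi_{\jj^*}\rangle$, and by orthonormality of the basis this is $|W|$ if $\kk=\jj^*$ (both lying in $\CL$, using $*(\CL)=\CL$) and $0$ otherwise.

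For the second claim, that $*$ is an anti-automorphism of $(L^2(T_{\ell,0})^W,\star)$, I would first observe $*$ is conjugate-linear on $L^2(T_{\ell,0})^W$: by Equation \eqref{Equ: omega}, $\wchi_\kk^* = \wchi_{\kk^*} = \varepsilon(\Omega)\overline{\wchi_\kk}$, and since $\varepsilon(\Omega)$ is a fixed sign, $*$ is $\mathbb C$-antilinear on the span. It is clearly an involution since $**=\mathrm{id}$ on $\CL$. For the anti-multiplicativity, compute on basis elements: $(\wchi_\kk\star\wchi_\jj)^* = \varepsilon(\Omega)\overline{\wchi_\kk\star\wchi_\jj} = \varepsilon(\Omega)\,\overline{\wchi_\kk\wchi_\jj}/\overline{\wchi_\rho}$. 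Using $\overline{\wchi_\kk}=\varepsilon(\Omega)\wchi_\kk^*$, $\overline{\wchi_\jj}=\varepsilon(\Omega)\wchi_\jj^*$, $\overline{\wchi_\rho}=\varepsilon(\Omega)\wchi_\rho$ (as $\rho^*=\rho$), the three $\varepsilon(\Omega)$ factors together with the leading one combine to $\varepsilon(\Omega)^4=1$, leaving $(\wchi_\kk\star\wchi_\jj)^* = \wchi_\kk^*\wchi_\jj^*/\wchi_\rho = \wchi_\kk^*\star\wchi_\jj^* = \wchi_\jj^*\star\wchi_\kk^*$, the last equality by commutativity of $\star$ (inherited from commutativity of the pointwise product on $T_{\ell,0}$). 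Extending conjugate-linearly over the basis gives $(a\star b)^*=b^*\star a^*$ for all $a,b$, and $\rho^*=\rho$ shows $*$ fixes the identity. The main obstacle I anticipate is purely bookkeeping: keeping the sign $\varepsilon(\Omega)$ and the complex conjugations straight, and verifying cleanly that the dual object $V_{\kk-\rho}^*$ in the level-$\ell$ quantum group indeed has highest weight $\kk^*-\rho$ with $\kk^*\in\CL$ — this last point is where I would lean on the structure of $\Omega$ (the longest element composed with the diagram automenorphism $-1$) and the identity $\Omega(\CL)=-\CL$, $\Omega(\mathcal C_\ell)=-\mathcal C_\ell$ noted just before the proposition.
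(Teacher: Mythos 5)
Your second, ``direct'' computation for the first claim and your computation of $(\wchi_\kk\star\wchi_\jj)^*$ are exactly the paper's proof: orthonormality of $\{|W|^{-1/2}\wchi_{\ssb}\}_{\ssb\in\CL}$ together with Equation \eqref{Equ: omega} give $|W|\,\tilde{N}_{\kk,\jj}^{\rho}=\langle \wchi_{\kk},\wchi_{\jj^*}\rangle=|W|\,\delta_{\kk,\jj^*}$, and the same conjugation identities yield the anti-multiplicativity, so the proposal is correct and takes essentially the same route (your first, categorical argument via $\dim\hom(\mathbf{1},X\otimes Y)$ is an unnecessary detour the paper does not take). One bookkeeping caution: $\overline{\wchi_{\jj}}=\wchi_{-\jj}=\varepsilon(\Omega)\wchi_{\jj^*}$ rather than $\wchi_{\jj^*}$, while the quotient $\wchi_{\rho}/\overline{\wchi_{\rho}}$ leaves a factor $\varepsilon(\Omega)$ rather than cancelling outright, so the two signs you dropped cancel each other and the final identity stands.
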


\begin{proof}
By Corollary \ref{cor: basis} and
 Theorem \ref{thm:fusionrule}, for any $\kk,\jj \in \CL$,
\begin{align*}
\tilde{N}_{\kk,\jj}^{\rho}
=|W|^{-1}\langle \wchi_{\kk} \star \wchi_{\jj}, \wchi_\rho \rangle
=|W|^{-1}\langle \wchi_{\kk} , \wchi_\rho \star \wchi_{\jj}^* \rangle
=|W|^{-1}\langle \wchi_{\kk} , \wchi_{\jj}^* \rangle
=\delta_{\kk,\jj^*}.
\end{align*}

Moreover,
\begin{align*}
(\wchi_{\kk} \star \wchi_{\jj})^*
=\varepsilon(\Omega) \overline{\wchi_{\kk} \star \wchi_{\jj}}
=\varepsilon(\Omega) \frac{ \overline{\wchi_{\kk}} \overline{ \wchi_{\jj}}}{\overline{\wchi_{\rho}}}
=\frac{ \wchi_{\kk}^* \wchi_{\jj}^*} {\wchi_{\rho}}
=\wchi_{\jj}^* \star \wchi_{\kk}^* .
\end{align*}
\end{proof}

\begin{corollary}
For any $\kk,\jj, \ssb \in \CL$, 
$\tilde{N}_{\kk,\jj}^{\ssb}=\tilde{N}_{\kk^*,\jj^*}^{\ssb^*}$.
\end{corollary}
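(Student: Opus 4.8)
The plan is to deduce the identity $\tilde{N}_{\kk,\jj}^{\ssb}=\tilde{N}_{\kk^*,\jj^*}^{\ssb^*}$ from the two structural facts just established: that $*$ is an anti-automorphism of $(L^2(T_{\ell,0})^W,\star)$, and that $\tilde{N}_{\kk,\jj}^{\ssb}=|W|^{-1}\langle \wchi_{\kk}\star\wchi_{\jj},\wchi_{\ssb}\rangle$ from Theorem~\ref{thm:fusionrule}. First I would apply $*$ to the product $\wchi_{\kk}\star\wchi_{\jj}$ and expand it in the orthonormal basis $\{|W|^{-1/2}\wchi_{\ssb}\}_{\ssb\in\CL}$ of Corollary~\ref{cor: basis}: from $\wchi_{\kk}\star\wchi_{\jj}=\sum_{\ssb}\tilde{N}_{\kk,\jj}^{\ssb}\wchi_{\ssb}$ and the fact that $*$ sends $\wchi_{\ssb}$ to $\wchi_{\ssb^*}$, taking $*$ of both sides (using that the coefficients $\tilde{N}_{\kk,\jj}^{\ssb}$ are real, indeed nonnegative integers) yields $\wchi_{\jj}^*\star\wchi_{\kk}^*=\sum_{\ssb}\tilde{N}_{\kk,\jj}^{\ssb}\wchi_{\ssb^*}$.

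Next I would reindex the right-hand sum by $\ssb\mapsto\ssb^*$, which is a bijection of $\CL$ since $*$ is an involution, to get $\wchi_{\jj}^*\star\wchi_{\kk}^*=\sum_{\ssb}\tilde{N}_{\kk,\jj}^{\ssb^*}\wchi_{\ssb}$. On the other hand, $\wchi_{\jj}^*=\wchi_{\jj^*}$ and $\wchi_{\kk}^*=\wchi_{\kk^*}$ by definition, so the left-hand side is $\wchi_{\jj^*}\star\wchi_{\kk^*}=\sum_{\ssb}\tilde{N}_{\jj^*,\kk^*}^{\ssb}\wchi_{\ssb}$, again by Theorem~\ref{thm:fusionrule}. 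Comparing coefficients in the orthonormal basis gives $\tilde{N}_{\jj^*,\kk^*}^{\ssb}=\tilde{N}_{\kk,\jj}^{\ssb^*}$ for all $\ssb$, and replacing $\ssb$ by $\ssb^*$ gives $\tilde{N}_{\jj^*,\kk^*}^{\ssb^*}=\tilde{N}_{\kk,\jj}^{\ssb}$. Finally I would use commutativity of the fusion product (which holds because $\wchi_{\kk}\star\wchi_{\jj}=\wchi_{\kk}\wchi_{\jj}/\wchi_{\rho}=\wchi_{\jj}\star\wchi_{\kk}$, the ordinary product of functions being commutative), hence $\tilde{N}_{\jj^*,\kk^*}^{\ssb^*}=\tilde{N}_{\kk^*,\jj^*}^{\ssb^*}$, to conclude $\tilde{N}_{\kk,\jj}^{\ssb}=\tilde{N}_{\kk^*,\jj^*}^{\ssb^*}$.

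There is no serious obstacle here; the only points requiring a word of care are that the structure constants are real (so that $*$, which is conjugate-linear, does not introduce a conjugation on them — this follows from $\tilde{N}_{\kk,\jj}^{\ssb}\in\mathbb{N}$ as noted in the remark after Definition~\ref{def: Nkjs}), and that one may freely reindex the sum by the involution $*$ on $\CL$. Alternatively, one could give a one-line inner-product proof: $\tilde{N}_{\kk^*,\jj^*}^{\ssb^*}=|W|^{-1}\langle\wchi_{\kk^*}\star\wchi_{\jj^*},\wchi_{\ssb^*}\rangle=|W|^{-1}\langle(\wchi_{\jj}\star\wchi_{\kk})^*,\wchi_{\ssb}^*\rangle=|W|^{-1}\langle\wchi_{\ssb},\wchi_{\jj}\star\wchi_{\kk}\rangle=\overline{|W|^{-1}\langle\wchi_{\jj}\star\wchi_{\kk},\wchi_{\ssb}\rangle}=\tilde{N}_{\jj,\kk}^{\ssb}=\tilde{N}_{\kk,\jj}^{\ssb}$, using that $*$ is a conjugate-linear anti-automorphism together with commutativity and the reality of the coefficients. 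I would present whichever of these is cleaner in context; the basis-expansion version is probably more transparent.
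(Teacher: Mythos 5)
Your argument is correct and is exactly the route the paper intends: the corollary is stated without proof as an immediate consequence of the preceding proposition that $*$ is a (conjugate-linear) anti-isomorphism for $\star$, combined with commutativity of $\star$ and the reality of the coefficients $\tilde{N}_{\kk,\jj}^{\ssb}$. Both your basis-expansion version and your one-line inner-product version are valid fillings of the gap the authors left to the reader.
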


Recall that the Weyl character $\chi_{\kk}$ is graded by $\kk$ in $\sW/\sR\cong Z(\g)$.
Therefore the coefficient $N_{\jj,\kk}^{\ssb}$ is non-zero only if the grading matches, namely $\jj+\kk-\ssb\in \sR$.

\begin{definition}
We define the grading of $\wchi_\kk$ to be $\kk-\rho$ in $\sW/\sR\cong Z(\g)$.
\end{definition}

By Definition \ref{def: Nkjs}, $\tilde{N}_{\jj,\kk}^{\ssb}$ is non-zero only if the grading matches. 
So the grading is additive under the multiplication $\star$.

A \textbf{fusion ring} $\mathfrak{A}$ is a ring over $\Z$ with a basis $\{x_0=1, x_1, \ldots, x_m\}$ such that 
\begin{itemize}
\item[1.] $x_jx_k=\sum_{s=0}^m N_{j,k}^s x_s$, $N_{j,k}^s \in \mathbb{N}$;
\item[2.] There exists an involution $*$ on $\{0,1,2,\ldots, m\}$, such that $N_{ij}^0=\delta_{i,j^*}$, and it induces an
anti-isomorphism of $\mathfrak{A}$, $x_{k}^*:=x_{k^*}$ and
$$x_{k}^*x_{j}^*=(x_{j}x_{k})^*=\sum_{s=0}^m N_{j,k}^s x_{s}^*,$$
namely, $N_{k^*,j^*}^{s^*}=N _{j,k}^{s}$.
\end{itemize}

\begin{definition}
Let $R_{\ell}$ denote the $Z(\g)$-graded fusion ring with basis $\{\wchi_{\kk}\}_{\kk\in \CL}$.
\end{definition}

Note that $\{\chi_{\ww} | \ww \in \fw \}$ is a multiplicative basis of the fusion ring of Weyl characters, so $\{\wchi_{\ww+\rho} | \ww \in \fw\}$ is a multiplicative basis of $R_{\ell}$.

\begin{proposition}
For any $g,g' \in L^2(T_{\ell,0})^W$, $\kk \in \CL$, we have
$$\langle \wchi_{\kk} \star g , g' \rangle=\langle g, \wchi_{\kk}^* \star g' \rangle.$$
\end{proposition}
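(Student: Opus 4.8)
The statement to prove is the adjointness
$$\langle \wchi_{\kk} \star g , g' \rangle=\langle g, \wchi_{\kk}^* \star g' \rangle$$
for $g, g' \in L^2(T_{\ell,0})^W$ and $\kk \in \CL$. The natural approach is to reduce to the case where $g, g'$ run over the orthonormal basis $\{|W|^{-1/2}\wchi_{\jj}\}_{\jj \in \CL}$ supplied by Corollary \ref{cor: basis}: since both sides of the claimed identity are sesquilinear in $(g,g')$ (conjugate-linear in $g$, say, and linear in $g'$, matching the convention used for the inner product elsewhere in the excerpt), it suffices to verify it for $g = \wchi_{\jj}$ and $g' = \wchi_{\ssb}$ with $\jj, \ssb \in \CL$.

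\textbf{Key steps.} First I would expand the left-hand side using Theorem \ref{thm:fusionrule}: $\wchi_{\kk} \star \wchi_{\jj} = \sum_{\ssb' \in \CL} \tilde{N}_{\kk,\jj}^{\ssb'}\wchi_{\ssb'}$, so by orthogonality of the basis $\{\wchi_{\ssb'}\}$ we get $\langle \wchi_{\kk}\star \wchi_{\jj}, \wchi_{\ssb}\rangle = |W|\,\tilde{N}_{\kk,\jj}^{\ssb}$. Next I would expand the right-hand side the same way: $\wchi_{\kk}^* \star \wchi_{\ssb} = \wchi_{\kk^*}\star \wchi_{\ssb} = \sum_{\jj' \in \CL}\tilde{N}_{\kk^*,\ssb}^{\jj'}\wchi_{\jj'}$, hence $\langle \wchi_{\jj}, \wchi_{\kk}^*\star \wchi_{\ssb}\rangle = |W|\,\overline{\tilde{N}_{\kk^*,\ssb}^{\jj}} = |W|\,\tilde{N}_{\kk^*,\ssb}^{\jj}$ since the fusion coefficients are nonnegative integers. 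So the identity reduces to the purely combinatorial claim
$$\tilde{N}_{\kk,\jj}^{\ssb} = \tilde{N}_{\kk^*,\ssb}^{\jj}, \qquad \kk,\jj,\ssb \in \CL.$$
This is the Frobenius reciprocity for the fusion ring $R_\ell$, and it should follow from combining the two symmetries already recorded in the excerpt: the involution identity $\tilde{N}_{\kk,\jj}^{\ssb} = \tilde{N}_{\kk^*,\jj^*}^{\ssb^*}$ (the Corollary after the Proposition on the anti-isomorphism) together with the identification $\tilde{N}_{\kk,\jj}^{\rho} = \delta_{\kk,\jj^*}$ and associativity/commutativity of $\star$. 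Concretely, $\tilde{N}_{\kk,\jj}^{\ssb} = |W|^{-1}\langle \wchi_{\kk}\star\wchi_{\jj}, \wchi_{\ssb}\rangle = |W|^{-1}\langle \wchi_{\jj}, \wchi_{\kk}^*\star\wchi_{\ssb}\rangle$ — but that is circular, so instead I would argue directly: expand $\wchi_{\kk}\star \wchi_{\jj}\star \wchi_{\ssb^*}$ and read off the coefficient of $\wchi_\rho$ in two ways, using $\tilde{N}_{\cdot,\cdot}^{\rho} = \delta_{\cdot,(\cdot)^*}$, to get $\tilde{N}_{\kk,\jj}^{\ssb} = (\text{coeff of }\wchi_\rho\text{ in }\wchi_\kk \star \wchi_\jj \star \wchi_{\ssb^*})$, which by commutativity of $\star$ equals $(\text{coeff of }\wchi_\rho\text{ in }\wchi_{\kk}\star\wchi_{\ssb^*}\star\wchi_\jj) = \tilde{N}_{\kk,\ssb^*}^{\jj^*}$, and then one application of the $*$-symmetry of the fusion coefficients converts $\tilde{N}_{\kk,\ssb^*}^{\jj^*}$ into $\tilde{N}_{\kk^*,\ssb}^{\jj}$.

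\textbf{Main obstacle.} The genuine content is really just getting the bookkeeping of the involution $*$ and the conjugates straight: one must use that $\wchi_{\kk}^* = \wchi_{\kk^*} = \varepsilon(\Omega)\overline{\wchi_\kk}$ (Equation \eqref{Equ: omega}), that $\varepsilon(\Omega)^2 = 1$, and that $\tilde N$ is real, so all the conjugations are harmless. The only step requiring a small argument is extracting the "coefficient of $\wchi_\rho$" characterization of $\tilde{N}_{\kk,\jj}^{\ssb}$ and invoking associativity of $\star$ — the latter is immediate from the definition $\wchi_\kk \star \wchi_\jj = \wchi_\kk\wchi_\jj/\wchi_\rho$ since ordinary multiplication of functions on $T_{\ell,0}$ is associative and $\wchi_\rho$ is invertible there. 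Everything else is linear algebra in the orthonormal basis, so I expect no serious difficulty, only a careful handling of the sesquilinearity convention.
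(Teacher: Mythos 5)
Your proof is correct, but it takes a genuinely different route from the paper. The paper's proof is a three-line pointwise computation: since $\wchi_{\kk}\star g=(\wchi_{\kk}/\wchi_{\rho})\,g$ is just multiplication by the function $\wchi_{\kk}/\wchi_{\rho}$ on $L^2(T_{\ell,0})$, the adjoint is multiplication by the complex conjugate, and $\overline{(\wchi_{\kk}/\wchi_{\rho})}=\wchi_{\kk^*}/\wchi_{\rho}$ follows at once from Equation \eqref{Equ: omega} together with $\rho^*=\rho$ (the two factors of $\varepsilon(\Omega)$ cancel). You instead work entirely inside the fusion ring: reduce by sesquilinearity to the basis $\{\wchi_{\jj}\}_{\jj\in\CL}$, translate the identity into the Frobenius reciprocity $\tilde{N}_{\kk,\jj}^{\ssb}=\tilde{N}_{\kk^*,\ssb}^{\jj}$, and derive that from associativity/commutativity of $\star$, the pairing $\tilde{N}_{\cdot,\cdot}^{\rho}=\delta_{\cdot,(\cdot)^*}$, and the $*$-symmetry $\tilde{N}_{\kk,\jj}^{\ssb}=\tilde{N}_{\kk^*,\jj^*}^{\ssb^*}$; all of these steps check out, and you correctly flagged and avoided the naive circular shortcut. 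What your argument buys is generality — it is a purely ring-theoretic proof valid in any fusion ring with those axioms, independent of the realization of $R_{\ell}$ by functions — at the cost of being longer and of leaning on the earlier combinatorial facts. One caution on logical order: the paper's own proof of $\tilde{N}_{\kk,\jj}^{\rho}=\delta_{\kk,\jj^*}$ silently uses the very adjointness you are proving, so if you wanted your route to be self-contained you would need to re-derive that pairing identity directly (e.g.\ from $\langle\wchi_{\kk}\star\wchi_{\jj},\wchi_{\rho}\rangle=\langle\chi_{\kk-\rho}\chi_{\jj-\rho}\wchi_{\rho},\wchi_{\rho}\rangle$ and orthogonality), which is doable but adds another step; the paper's direct computation sidesteps this entirely.
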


\begin{proof}
By Equation \eqref{Equ: omega}, we have
\begin{align*}
\langle \wchi_{\kk} \star g , g' \rangle 
=&\langle \frac{\wchi_{\kk}}{\wchi_{\rho}} g , g' \rangle \\
=&\langle g, \overline{\left(\frac{\wchi_{\kk}}{\wchi_{\rho}}\right)} g' \rangle \\
=&\langle g, \frac{\wchi_{\kk^*}}{\wchi_{\rho}} g' \rangle \\
=&\langle g, \wchi_{\kk^*}\star g' \rangle.
\end{align*}
\end{proof}

As a consequence, $L^2(T_{\ell,0})^W$ is a faithful $Z(\g)$-graded, unital *-representation of the *-algebra $L^2(T_{\ell,0})^W$.
So  $L^2(T_{\ell,0})^W$ is an abelian $Z(\g)$-graded, unital $C^*$-algebra with the multiplication $\star$, and involution $*$.
\begin{definition}
We call the $Z(\g)$-graded, unital $C^*$-algebra $L^2(T_{\ell,0})^W$ the $\ell$-character Verlinde algebra.
\end{definition}

\begin{remark}
By Theorem \ref{thm:fusionrule}, one sees that $L^2(T_{\ell,0})^W$ is isomorphic to the Verlinde algebra of 
quantum $\g$ at level $\ell$, and $R_{\ell}$ is the fusion ring. Therefore one can consider the anti-symmetric $\ell$-characters as the characters of the corresponding irreps. See \cite{AndPar95} for a construction of the corresponding fusion category.
\end{remark}

\section{GUS-Representations}

Recall that the fusion ring $R_{\ell}$ of $\g$ at level $\ell$ has a $Z(\g)=\sW/\sR$ grading, and $\wchi_{\jj} \in R_{\ell}$ is graded by $\jj-\rho \in Z(\g)$. 
Moreover, $\{\wchi_{\jj}\}_{\jj \in \CL}$ is a basis of $R_{\ell}$ and $\{\wchi_{\ww+\rho}\}_{\ww \in \fw}$ is a multiplicative basis of $R_{\ell}$.

\begin{definition}[GUS-rep]\label{Def:gus}
For a finite dimensional Hilbert space $\hil$, 
a representation $\Pi: R_{\ell} \to \hom(\hil)$ is called a unital, *-representation, abbreviated as US-rep, if for any $\wchi_{\jj} \in R_{\ell}$, $\kk \in Z(\g)$, the following properties are satisfied:
\begin{itemize}
\item[(1)] $\Pi(\wchi_{\rho})=I$; 
\item[(2)] $\langle \Pi(\wchi_{\jj}) v, v' \rangle = \langle v,  \Pi(\wchi_{\jj}^*) v' \rangle$ for any $v, v'\in \hil$.
\end{itemize}

Furthermore, if $\displaystyle \hil=\bigoplus_{\kk \in Z(\g) } \hil_{\kk}$ is $Z(\g)$-graded, and 
\begin{itemize}
\item[(3)] $\Pi(\wchi_{\jj}) \hil_{\kk} \subseteq \hil_{\kk+\jj-\rho}, \; \forall \; \kk \in Z(\g),$
\end{itemize}
then $\Pi$ is called a $Z(\g)$-graded, unital, *-representation, abbreviated as GUS-rep.
Equivalently, $\Pi$ is a ($Z(\g)$-graded,) unital, *-representation of the $\ell$-character Verlinde algebra. 
\end{definition}

\begin{definition}
For a US-rep $\Pi$, 
an ONB $B$ of $\hil$ is called a $\mathbb{K}$ basis, $\mathbb{K}=\mathbb{C},~\mathbb{R},~\mathbb{Z}$, or $\mathbb{N}$, if $\Pi_{\ww}:=\Pi(\wchi_{\ww+\rho}) \in M_d(\mathbb{K})$, for any $\ww \in \fw$.
We call $B$ a full $\mathbb{N}$ basis, if $\Pi(\wchi_{\jj}) \in M_d(\mathbb{N})$, for any $\jj \in \CL$.  
If $\Pi$ is a GUS-rep, $\displaystyle B=\bigsqcup_{\kk  \in Z(\g)} B_\kk$ and each $B_\kk$ is an ONB of $\hil_{\kk}$.
then $B$ is called $Z(\g)$-graded.
\end{definition}

A quiver $\G$ consists of a set $\G_v$ of vertices, a set $\G_e$ of oriented edges, a function $s: \G_e\mapsto \G_v$ giving the start of the edge and another function $t: \G_e\mapsto \G_v$ giving the target of the edge.

\begin{definition}
We call a quiver $\G$ $\g$-graded, if there is a grading map $\varepsilon$, $\varepsilon: \G_v\to Z(\g)$ and $\varepsilon: \G_e\mapsto \fw$, such that for any $e\in \G_e$, it is true that $\varepsilon(s(e))+\varepsilon(e)=\varepsilon(t(e))$ in $Z(\g)$.
Moreover, the adjacency matrix $\Delta_\ww$, $\ww \in \fw$ is defined as a matrix acting on $\G_v$, whose $\alpha,\beta$ entry is the number of edges graded by $\ww$ from $\beta$ to $\alpha$.
\end{definition}

\begin{definition}[Quantum Dynkin diagrams]\label{def:dynkin}
Let $\G$ be a $\g$-graded quiver and $\ell\in \mathbb{N}$. We call $\G$ a quantum Dynkin diagram over $\g$ at level $\ell$,
if there is a $Z(\g)$-graded unital *-homomorphism $\Pi_G: R_\ell \to M_{\G_v}(\mathbb{Z})$, such that 
$$\Delta_{\ww}=\Pi_{\ww}, ~\forall~ \ww\in \fw.$$
Furthermore, we say $\G$ is natural if $\Pi(\wchi_{\kk}) \in M_{\G_v}(\mathbb{N})$, $\forall~ \kk\in \CL$.
We call $\G$ simple, if $\G$ is a connected quiver. 
\end{definition}

\color{black}
\begin{remark}
Similar notions without grading have been studied by Etingof and Khovanov in \cite{EtiKho95}.
\end{remark}
\color{black}

\begin{proposition}
If $\G$ is a quantum Dynkin diagram, then the *-homomorphism $\Pi_G$ is unique.
\end{proposition}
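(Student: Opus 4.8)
The plan is to show that any $Z(\g)$-graded unital $*$-homomorphism $\Pi_G : R_\ell \to M_{\G_v}(\Z)$ satisfying $\Delta_\ww = \Pi_\ww$ for all $\ww \in \fw$ is completely determined by the data of the quiver $\G$, so that two such homomorphisms must coincide. The starting observation is that $\{\wchi_{\ww+\rho} \mid \ww \in \fw\}$ is a \emph{multiplicative} basis of $R_\ell$: this was recorded in the remark following the definition of $R_\ell$ and again at the beginning of \S\,5. Concretely, every element $\wchi_\kk$ for $\kk \in \CL$ can be written as a $\Z$-polynomial in the $\wchi_{\ww+\rho}$, because the Weyl characters $\chi_\ww$ generate the representation ring of $\g$ as a polynomial ring and the quotient map to $R_\ell$ is a surjective ring homomorphism. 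Hence $R_\ell$ is generated as a ring by $\{\wchi_{\ww+\rho} \mid \ww \in \fw\}$.

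The second step is the trivial but essential remark that a ring homomorphism out of $R_\ell$ is uniquely determined by its values on a generating set. So if $\Pi_G$ and $\Pi_G'$ are two $Z(\g)$-graded unital $*$-homomorphisms $R_\ell \to M_{\G_v}(\Z)$ with
\[
\Pi_G(\wchi_{\ww+\rho}) = \Delta_\ww = \Pi_G'(\wchi_{\ww+\rho}), \qquad \forall\, \ww \in \fw,
\]
then, expressing an arbitrary $\wchi_\kk$ as a polynomial $P_\kk\big((\wchi_{\ww+\rho})_{\ww\in\fw}\big)$ with integer coefficients and using that both maps are ring homomorphisms, we get $\Pi_G(\wchi_\kk) = P_\kk\big((\Delta_\ww)_\ww\big) = \Pi_G'(\wchi_\kk)$. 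Since $\{\wchi_\kk \mid \kk \in \CL\}$ is a $\Z$-basis of $R_\ell$, linearity then forces $\Pi_G = \Pi_G'$ on all of $R_\ell$.

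I would write the argument in exactly that order: (i) invoke that the fundamental-weight $\ell$-characters $\wchi_{\ww+\rho}$ generate $R_\ell$ as a ring; (ii) observe that a ring homomorphism is determined by its restriction to a generating set; (iii) conclude that the condition $\Delta_\ww = \Pi_\ww$ pins down $\Pi_G$ uniquely. The only point that deserves a sentence of care is \emph{why} the $\wchi_{\ww+\rho}$ generate $R_\ell$ as a ring rather than merely spanning it additively: this follows from the surjection of the classical representation ring $\mathcal{R}(\g) = \Z[\chi_{\ww_1},\dots,\chi_{\ww_\rk}]$ onto $R_\ell$ (which is how $\tilde N_{\kk,\jj}^\ssb$ was defined in Definition \ref{def: Nkjs} and Theorem \ref{thm:fusionrule}), combined with the identification $\wchi_\kk / \wchi_\rho = \chi_{\kk-\rho}$ on $T_{\ell,0}$. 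I do not expect any genuine obstacle here; the statement is essentially the formal consequence of ``multiplicative basis $\Rightarrow$ ring generators $\Rightarrow$ a homomorphism is determined by its values on them,'' and the only thing to be slightly careful about is not to confuse ``additive basis'' with ``ring-generating set.''
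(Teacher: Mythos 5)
Your argument is the same as the paper's: the condition $\Delta_\ww = \Pi_\ww$ fixes $\Pi_G$ on the multiplicative basis $\{\wchi_{\ww+\rho} \mid \ww \in \fw\}$ of $R_\ell$, and a ring homomorphism is determined by its values on ring generators. The paper states this in two sentences; your additional justification of why these elements generate $R_\ell$ as a ring (via the surjection from the classical representation ring) is a correct elaboration of the same point.
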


\begin{proof}
For any  $\ww \in \fw$, the adjacency matrix $\Pi_{\ww}=\Delta_{\ww}$ is determined by $\G$ by definition. Since $\{\wchi_{\ww+\rho} | \ww \in \fw\}$ is a multiplicative basis of $R_{\ell}$, $\Pi$ is uniquely determined by $\G$.
\end{proof}

The multiplication $\star$ of $\wchi_\kk$, $\kk \in \CL$, on $L^2(T_{\ell,0})^W$ defines a GUS-rep $\Pi_A$ of $R_{\ell}$.
Recall that $\{\frac{1}{|W|}\wchi_{\jj}\}_{\jj\in \CL}$ is an orthonormal basis of $L^2(T_{\ell,0})^W$. Acting on this basis, we have the regular reprensentation
$$\Pi_A(\wchi_\kk) \wchi_{\jj}:=\wchi_{\kk} \star \wchi_{\jj}=\sum_{\ssb\in \CL} \tilde{N}_{\kk,\jj}^{\ssb} \wchi_{\ssb}.$$
In particular $\Pi_A(\wchi_\kk)_{\ssb,\jj}=\tilde{N}_{\kk,\jj}^{\ssb}$.

\textcolor{black}{
The fusion graph of the Verlinde algebra with respect to the fundamental representations is a quantum Dynkin diagram, which is usually referred as the type A graph in different fomulations: }

\begin{definition}[Type $A$ quantum Dynkin diagrams]
For any simple Lie algebra $\g$ and level $\ell$, we define the type $A$ quantum Dynkin diagram $\mathcal{A}_{\ell}(\g)$ as follows: 
\begin{itemize}
\item[(1)] The vertices of $\mathcal{A}_{\ell}(\g)$ is $\{\wchi_{\kk} : \kk \in\CL\}$ graded by $Z(\g)$.
\item[(2)] For any $\ww \in \fw$, the multiplicity of the edge from $\jj$ to $\ssb$ graded by $\ww$ is $\tilde{N}_{\ww+\rho,\jj}^{\ssb}$.
\end{itemize}
\end{definition}

Let $\mathscr{C}$ be the representation category of quantum $\g$ at level $\ell$, which is a unitary modular tensor category. Let $\mathscr{M}$ be a module category of $\mathscr{C}$. We obtain a quiver $G_{\mathscr{M}}$ from the action of $\mathscr{C}$ on $\mathscr{M}$:
\begin{itemize}
\item[(1)] The vertices of $G_{\mathscr{M}}$ are (representatives of) irreducible modules in $\mathscr{M}$, denoted by $Irr_{\mathscr{M}}$.
\item[(2)] For any $\ww \in \fw$, and irreducible modules $m_1,m_2$ in $\mathscr{M}$, the multiplicity of the edge from $m_1$ to $m_2$ graded by $\ww$ is $\dim \hom_{\mathscr{M}} (V_{\ww} \otimes m_1, m_2)$.
\end{itemize}

\begin{proposition}
For any $\kk \in\CL$, and $m_1 \in Irr_{\mathscr{M}}$,
we define 
$$\Pi_{\mathscr{M}}(\wchi_{\kk}) m_1=\sum_{m_2 \in Irr_{\mathscr{M}}} \dim \hom_{\mathscr{M}} (V_{\kk-\rho} \otimes m_1, m_2) m_2.$$
Then $\Pi_{\mathscr{M}}$ is a US-rep and $Irr_{\mathscr{M}}$ is a natural basis.
\end{proposition}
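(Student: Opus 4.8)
The plan is to verify the two axioms of a US-rep directly from the definitions, using the module category structure and the identification of $R_\ell$ with the $\ell$-character Verlinde algebra. First I would check that $\Pi_{\mathscr{M}}$ is an algebra homomorphism. This follows from associativity of the $\mathscr{C}$-action on $\mathscr{M}$: for $\kk, \jj \in \CL$, the composite $\Pi_{\mathscr{M}}(\wchi_{\kk}) \Pi_{\mathscr{M}}(\wchi_{\jj})$ counts, with multiplicity, dimensions of $\hom_{\mathscr{M}}(V_{\kk-\rho} \otimes (V_{\jj-\rho} \otimes m_1), m_2)$ summed over intermediate modules; using $(V_{\kk-\rho} \otimes V_{\jj-\rho}) \otimes m_1 \cong V_{\kk-\rho} \otimes (V_{\jj-\rho} \otimes m_1)$ in $\mathscr{M}$ and the fusion rule $V_{\kk-\rho}\otimes V_{\jj-\rho} = \bigoplus_{\ssb \in \CL} \tilde N_{\kk,\jj}^{\ssb} V_{\ssb-\rho}$ from the remark after Definition \ref{def: Nkjs}, this equals $\sum_{\ssb} \tilde N_{\kk,\jj}^{\ssb}\, \Pi_{\mathscr{M}}(\wchi_{\ssb})$, which by Theorem \ref{thm:fusionrule} is exactly $\Pi_{\mathscr{M}}(\wchi_{\kk}\star\wchi_{\jj})$.

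Next I would check axiom (1), $\Pi_{\mathscr{M}}(\wchi_{\rho}) = I$. Since $\wchi_{\rho}$ corresponds to the unit object $V_{\rho-\rho}=V_0=\mathbf{1}$ of $\mathscr{C}$, and $\mathbf{1}\otimes m \cong m$ for every module $m$, we get $\dim\hom_{\mathscr{M}}(\mathbf{1}\otimes m_1, m_2) = \delta_{m_1,m_2}$, so $\Pi_{\mathscr{M}}(\wchi_{\rho})$ is the identity on the basis $Irr_{\mathscr{M}}$. For axiom (2), the $*$-compatibility, I would use rigidity/duality in the unitary modular tensor category $\mathscr{C}$: the dual of $V_{\kk-\rho}$ is $V_{(\kk-\rho)^*}$, and on the level of $\ell$-characters $\wchi_{\kk}^* = \wchi_{\kk^*}$ with $(\kk-\rho)^* = \kk^*-\rho$ matching the categorical dual, by the definition of $*$ on $\CL$ and the discussion around Equation \eqref{Equ: omega}. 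The adjointness
$$\dim\hom_{\mathscr{M}}(V_{\kk-\rho}\otimes m_1, m_2) = \dim\hom_{\mathscr{M}}(m_1, \overline{V_{\kk-\rho}}\otimes m_2) = \dim\hom_{\mathscr{M}}(V_{\kk^*-\rho}\otimes m_2, m_1)$$
then says precisely that the matrix of $\Pi_{\mathscr{M}}(\wchi_{\kk})$ in the ONB $Irr_{\mathscr{M}}$ is the transpose (equivalently, since entries are real nonnegative integers, the adjoint) of the matrix of $\Pi_{\mathscr{M}}(\wchi_{\kk^*}) = \Pi_{\mathscr{M}}(\wchi_{\kk}^*)$, which is axiom (2). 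Finally, that $Irr_{\mathscr{M}}$ is a natural basis is immediate: by construction $\Pi_{\mathscr{M}}(\wchi_{\kk})$ has entries $\dim\hom_{\mathscr{M}}(V_{\kk-\rho}\otimes m_1,m_2) \in \mathbb{N}$ for all $\kk \in \CL$, so the basis is even a full $\mathbb{N}$ basis in the sense of the earlier definition.

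The main obstacle is making the duality step in axiom (2) precise: one must invoke that $\mathscr{M}$, being a module category over the unitary (hence rigid, pivotal, spherical) modular tensor category $\mathscr{C}$, carries compatible inner products on the $\hom$-spaces so that the Frobenius reciprocity isomorphism $\hom_{\mathscr{M}}(V\otimes m_1,m_2)\cong \hom_{\mathscr{M}}(m_1,\overline V\otimes m_2)$ is dimension-preserving and that $\overline{V_{\kk-\rho}}\cong V_{\kk^*-\rho}$. This is standard for module categories over unitary fusion categories, but it is the one place where the argument uses genuine categorical input rather than bookkeeping; everything else is a direct translation through Theorem \ref{thm:fusionrule}.
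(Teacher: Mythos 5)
Your proposal is correct and follows essentially the same route as the paper's proof: associativity of the $\mathscr{C}$-action for the homomorphism property, triviality of $V_0$ for unitality, Frobenius reciprocity $\dim \hom_{\mathscr{M}} (V_{\kk-\rho} \otimes m_1, m_2) = \dim \hom_{\mathscr{M}} ( m_1, V_{\kk^*-\rho} \otimes m_2)$ for the $*$-axiom, and non-negativity of hom-dimensions for naturality. You simply spell out more explicitly (via the fusion rule and Theorem \ref{thm:fusionrule}) what the paper compresses into single sentences.
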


\begin{proof}
Since $V_0$ is the trivial representation, $\Pi_{\mathscr{M}}(\wchi_{\rho})$ is the identity. 
The associativity of the action of $\mathscr{C}$ on $\mathscr{M}$ implies that $\Pi$ is a representation.
By the Frobenius reciprocity 
$$ \dim \hom_{\mathscr{M}} (V_{\kk-\rho} \otimes m_1, m_2) = \dim \hom_{\mathscr{M}} ( m_1, V_{\kk^*-\rho} \otimes m_2),$$
we have that $\Pi$ is a US-rep.
Furthermore,  $\dim \hom_{\mathscr{M}} (V_{\kk-\rho} \otimes m_1, m_2) \in \mathbb{N}$, so $Irr_{\mathscr{M}}$ is a natural basis.

\end{proof}

We prove the results in the rest of the paper for any US-rep or GUS-rep of $R_{\ell}$ for any simple Lie algebra $\g$ at any level $\ell \in \mathbb{N}$. In particular, they are true for quantum Dynkin diagrams over $\g$ at level $\ell$.
This provides a general theory in the study of quantum Dynkin diagrams.

\section{Spectrum of representations}\label{Sec: Spectrum of representations}

In this section, we investigate the spectral theory of a GUS-rep $\Pi$ of $R_{\ell}$ defined in Definition \ref{Def:gus}.
We give an explicit construction of the spectrum of the regular representation $\Pi_A$ in $T$.
We prove that the spectrum for a GUS-rep $\Pi$ is contained in the spectrum of the type $A$ quantum Dynkin diagram. 

\begin{definition}
Recall that the Weyl group $W$ acts on each orbit of $T_{\ell,0}$ transitively. 
Define $\spec: \cong T_{\ell,0}/W$.
\end{definition}

For any $e^H\in T_{\ell, 0}$, let $\delta_{e^H}$ be the delta function at $e^H$. 
For any $\kk\in \CL$, define
\begin{align}
\Lambda_{e^H}&=\sum_{\theta \in W} \varepsilon(\theta) \delta_{e^{\theta(H)}}.
\end{align}
Then $\Lambda_{e^H} \in L^2(T_{0,\ell})^W$. Moreover,
\begin{align*}
\Pi_A(\wchi_{\kk}) \Lambda_{e^H}
=\wchi_{\kk} \star \Lambda_{e^H}
=\frac{\wchi_{\kk} \Lambda_{e^H}}{\wchi_{\rho}} 
=\chi_{\kk-\rho}(e^H) \Lambda_{e^H} .
\end{align*}
So $\Lambda_{e^H}$ is a common eigenvector of $\Pi_A(\wchi_{\kk})$ with eigenvalue $\chi_{\kk-\rho}(e^H)$.

Note that $\Lambda_{e^\theta(H)}=\varepsilon(\theta)\Lambda(e^H)$, so $\mathbb{C}(\Lambda_{e^{H}})$ is a well-defined eigenspace for $e^H\in \spec$.
The Weyl character $\chi_{\kk-\rho}$ is symmetric, so it is also well-defined on $e^H \in T_{\ell,0}/W$.

\begin{lemma}\label{Lem: spec}
For any $e^H, e^{H'} \in T_{\ell,0}$, the following are equivalent:
\begin{itemize}
\item[(1)] $e^H=e^{\theta(H')}$, for some $\theta\in W$;
\item[(2)] $\chi_{\kk}(e^H)=\chi_{\kk}(e^{H'}),$ for any $\kk \in \mathcal{C}$;
\item[(3)] $\chi_{\ww}(e^H)=\chi_{\ww}(e^{H'}),$ for any $\ww \in \fw$.
\end{itemize}

\end{lemma}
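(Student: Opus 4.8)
The implications $(1)\Rightarrow(2)\Rightarrow(3)$ are essentially immediate, so the content is in $(3)\Rightarrow(1)$. For $(1)\Rightarrow(2)$: each Weyl character $\chi_{\kk}$ is a $W$-invariant function on $T$ (it is the character of a representation, hence a class function, and conjugacy classes in $K$ meet $T$ in $W$-orbits), so $\chi_{\kk}(e^H)=\chi_{\kk}(e^{\theta(H')})=\chi_{\kk}(e^{H'})$. For $(2)\Rightarrow(3)$ there is nothing to prove since $\fw\subset\mathcal{C}$. The plan is therefore to spend all the effort on $(3)\Rightarrow(1)$.

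\textbf{The main step $(3)\Rightarrow(1)$.} First I would upgrade $(3)$ to $(2)$: since $\{\chi_{\ww}\mid\ww\in\fw\}$ generates the representation ring of $K$ as a polynomial algebra (the fundamental representations generate all of $\mathrm{Rep}(K)$ under $\oplus$ and $\otimes$), and the evaluation map at a point of $T$ is a ring homomorphism, agreement of all $\chi_{\ww}$ at $e^H$ and $e^{H'}$ forces agreement of every $\chi_{\kk}$, $\kk\in\mathcal{C}$, and hence of $\chi_\rho$ as well. Next, recall from the excerpt that $\hchi_{\kk}=\wchi_\rho\,\chi_{\kk-\rho}$ on $T_{\ell,0}$ (this is the identity $\wchi_{\kk}/\wchi_\rho=\chi_{\kk-\rho}$ used in the proof of Theorem \ref{thm:fusionrule}), and that $\wchi_\rho=\hchi_\rho$ is nonvanishing on $T_{\ell,0}$. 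So from $(2)$ we get $\wchi_{\kk}(e^H)/\wchi_\rho(e^H)=\wchi_{\kk}(e^{H'})/\wchi_\rho(e^{H'})$ for all $\kk\in\CL$. The key point now is that the functions $\{\wchi_{\kk}\mid\kk\in\CL\}$, being (up to the factor $|W|^{-1/2}$) an orthonormal basis of $L^2(T_{\ell,0})^W$ by Corollary \ref{cor: basis}, separate the $W$-orbits of $T_{\ell,0}$: if $e^H$ and $e^{H'}$ lay in distinct orbits, the anti-symmetrized delta functions $\Lambda_{e^H}$ and $\Lambda_{e^{H'}}$ would be orthogonal elements of $L^2(T_{\ell,0})^W$, yet each $\wchi_{\kk}$ would have to pair with them proportionally — more precisely, $\langle\wchi_{\kk},\Lambda_{e^H}\rangle=\overline{\wchi_{\kk}(e^H)}\,|W|$ up to normalization, so equality of all the ratios $\wchi_{\kk}(e^H):\wchi_\rho(e^H)$ and $\wchi_{\kk}(e^{H'}):\wchi_\rho(e^{H'})$ means the functionals $g\mapsto g(e^H)/\wchi_\rho(e^H)$ and $g\mapsto g(e^{H'})/\wchi_\rho(e^{H'})$ agree on the spanning set $\{\wchi_{\kk}\}$ of $L^2(T_{\ell,0})^W$, hence on all of it, hence on $\Lambda_{e^{H'}}$; evaluating $\Lambda_{e^{H'}}$ at $e^H$ versus $e^{H'}$ then forces $e^H\in W\cdot e^{H'}$.

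\textbf{Alternative for $(3)\Rightarrow(1)$, and the expected obstacle.} A cleaner route avoids $L^2$ entirely: having deduced $(2)$, one knows $\chi_{\kk}(e^H)=\chi_{\kk}(e^{H'})$ for all dominant $\kk$, i.e. the two points define the same character of $\mathrm{Rep}(K)$; but a point of $T$ modulo $W$ is determined by its image in $\mathrm{Hom}(\mathrm{Rep}(K),\mathbb C)$, because $T/W$ is (the real points of) the spectrum of the representation ring — equivalently, characters separate conjugacy classes of a compact group, and conjugacy classes of $K$ correspond bijectively to $W$-orbits on $T$. This is the ``well-known in Lie theory'' fact the authors are likely invoking. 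I expect the main obstacle to be purely expository: deciding how much of ``characters separate $W$-orbits on the maximal torus'' to prove versus cite. If a self-contained argument is wanted, the Weyl character formula route is safest: write $\chi_\rho(e^H)=\hchi_\rho(e^H)=\prod_{\rr\in\bR_+}(e^{i\langle\rr,H\rangle/2}-e^{-i\langle\rr,H\rangle/2})\neq 0$ on $T_{\ell,0}$, so $\chi_{\kk}(e^H)=\chi_{\kk}(e^{H'})$ for all $\kk$ gives $\hchi_{\kk+\rho}(e^H)\,\hchi_\rho(e^{H'})=\hchi_{\kk+\rho}(e^{H'})\,\hchi_\rho(e^H)$ for all $\kk\in\mathcal C$; letting $\kk$ range over a large alcove and using that the exponentials $\{e^{i\langle\theta(\kk+\rho),H\rangle}\}$ are linearly independent characters of $T_\ell$, one matches Fourier coefficients to conclude that the (signed) supports of $\hchi_{\kk+\rho}$ at $e^H$ and $e^{H'}$ coincide, which pins down $e^H$ up to $W$. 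I would present the short citation-based version in the main text and, if the referee prefers, relegate the Fourier-coefficient argument to a remark.
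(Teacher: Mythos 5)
Your proposal is correct, and your primary argument for $(3)\Rightarrow(1)$ takes a genuinely different route from the paper. The paper's proof of $(2)\Rightarrow(1)$ is the soft one you sketch as an ``alternative'': the Weyl characters $\{\chi_{\kk}\}_{\kk\in\mathcal{C}}$ form a basis of the $W$-symmetric functions on $T$, so agreement of all characters forces agreement of all symmetric functions, and Stone--Weierstrass then separates the $W$-orbits; the $(2)\Leftrightarrow(3)$ step is handled exactly as you do, via the multiplicative basis $\{\chi_{\ww}\}_{\ww\in\fw}$. Your main route instead stays inside the finite-dimensional space $L^2(T_{\ell,0})^W$: from $\wchi_{\kk}/\wchi_{\rho}=\chi_{\kk-\rho}$ and the non-vanishing of $\wchi_{\rho}$ on $T_{\ell,0}$, the two normalized evaluation functionals agree on the basis $\{\wchi_{\kk}\}_{\kk\in\CL}$ of Corollary \ref{cor: basis}, hence on $\Lambda_{e^{H'}}=\sum_{\theta}\varepsilon(\theta)\delta_{e^{\theta(H')}}$, whose non-vanishing at $e^{H}$ forces $e^{H}\in W\cdot e^{H'}$. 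This buys a completely finite, self-contained argument (no Stone--Weierstrass, no appeal to characters separating conjugacy classes) at the cost of invoking the $\ell$-character machinery of \S\ref{Sec: Anti-symmetric Character}; the paper's version is shorter but leans on an external density/separation theorem. Both arguments share the same unstated background fact, namely that the $W$-action on $T_{\ell,0}$ is free (so that $\Lambda_{e^{H'}}\neq 0$ and $\Lambda_{e^{H'}}(e^{H'})=1$); you might flag this when writing the $L^2$ version up. Your closing Fourier-coefficient sketch is looser than the rest and is not needed given either complete route.
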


\begin{proof}
(1) $\rightarrow$ (2): It follows from the fact that Weyl characters are symmetric under the Weyl group action.

(2) $\rightarrow$ (1):
The Weyl characters $\{\chi_{\kk} \}_{\kk \in \mathcal{C}}$ are a basis of symmetric functions on the maximal torus $T$ with respect to the Weyl group $W$ action. 
So any symmetric function has the same value on $e^H$ and $e^{H'}$.
By the Stone-Weierstrass theorem, $e^H=e^{\theta(H')}$, for some $\theta \in W$.

(2) $\leftrightarrow$ (3): It follows from the fact that $\{\chi_{\ww} | \ww \in \fw\}$ is a multiplicative basis of Weyl characters.

\end{proof}

\textcolor{black}{The simultaneous eigenvalue of the adjacency matrices of the Verlinde algebra was captured by the modular $S$ matrix, known as the Verlinde formula \cite{Ver88}. Now we represent these simultaneous eigenvalues using $e^H \in \spec$:}

\begin{theorem}\label{thm: A spectrum}
When $\Pi=\Pi_A$,
for any $e^H\in \spec$, $\mathbb{C}(\Lambda_{e^{H}})$ is the common eigenspace of the adjacency matrices $\Pi_{\ww}$, $\ww\in \fw$, with eigenvalue $\chi_{\ww}(e^H)$. Any simultaneous eigenvalue of the adjacency matrices is of this form and it has multiplicity one.
\end{theorem}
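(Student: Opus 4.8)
The plan is to establish two things: first, that $\mathbb{C}(\Lambda_{e^H})$ is genuinely a common eigenspace with the stated eigenvalue (this is essentially already computed in the discussion preceding the theorem), and second, that these exhaust all simultaneous eigenvalues and occur with multiplicity one. For the first point I would simply invoke the identity $\Pi_A(\wchi_{\kk})\Lambda_{e^H}=\chi_{\kk-\rho}(e^H)\Lambda_{e^H}$ derived above, specialized to $\kk=\ww+\rho$ with $\ww\in\fw$, noting that $\Lambda_{e^H}$ is nonzero precisely because $e^H\in T_{\ell,0}$ (so the orbit under $W$ has full size $|W|$ and the alternating sum does not collapse — this uses that the $W$-action is free on $T_{\ell,0}$, which follows from the mirror condition). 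Also $\Lambda_{e^{\theta(H)}}=\varepsilon(\theta)\Lambda_{e^H}$ makes the line $\mathbb{C}(\Lambda_{e^H})$ depend only on the class of $e^H$ in $\spec=T_{\ell,0}/W$.

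For the completeness and multiplicity-one claims, the key input is Corollary \ref{cor: basis}: the $\ell$-characters $\{|W|^{-1/2}\wchi_{\kk}\}_{\kk\in\CL}$ form an orthonormal basis of $L^2(T_{\ell,0})^W$, which is precisely the representation space of $\Pi_A$, and $|\CL|=\dim L^2(T_{\ell,0})^W$. On the other hand, the set $\{\Lambda_{e^H} : e^H\in\spec\}$ is an orthogonal set in $L^2(T_{\ell,0})^W$ of cardinality $|T_{\ell,0}/W|$. I would next argue that $|T_{\ell,0}/W| = |\CL|$: indeed $\CL$ is stated earlier to be a fundamental domain of $\sW_{\ell,0}$ under $W$, and the Fourier duality (Proposition \ref{prop: anti} together with the isomorphism $\sWl\cong$ dual of $T_\ell$) forces $\dim L^2(T_\ell)^W = \dim L^2(\sWl)^W$, both counted by fundamental-domain cardinalities; since $|\CL|=\dim L^2(T_{\ell,0})^W$ and the $\Lambda_{e^H}$ are $|T_{\ell,0}/W|$ linearly independent vectors in this space, we get $|T_{\ell,0}/W|\le|\CL|$, and an orthonormal-basis count gives equality. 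Hence $\{|W|^{-1/2}\Lambda_{e^H}: e^H\in\spec\}$ is itself an orthonormal basis of $L^2(T_{\ell,0})^W$ consisting of simultaneous eigenvectors of all the $\Pi_{\ww}$.

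Since the whole space decomposes as an orthogonal direct sum $\bigoplus_{e^H\in\spec}\mathbb{C}(\Lambda_{e^H})$ into these one-dimensional common eigenspaces, every simultaneous eigenvalue of the family $\{\Pi_{\ww}\}_{\ww\in\fw}$ must be one of the tuples $(\chi_{\ww}(e^H))_{\ww\in\fw}$. Finally, to conclude multiplicity one, I must show distinct classes $e^H\ne e^{H'}$ in $\spec$ give distinct eigenvalue tuples — but this is exactly the content of Lemma \ref{Lem: spec}, equivalence of (1) and (3): if $\chi_{\ww}(e^H)=\chi_{\ww}(e^{H'})$ for all $\ww\in\fw$ then $e^H=e^{\theta(H')}$ for some $\theta\in W$, i.e. they coincide in $\spec$. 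Therefore each eigenvalue tuple is attained on exactly one $\mathbb{C}(\Lambda_{e^H})$, which is one-dimensional, giving multiplicity one.

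\textbf{Main obstacle.} The computational steps are light; the one place needing care is the cardinality bookkeeping $|T_{\ell,0}/W|=|\CL|=\dim L^2(T_{\ell,0})^W$ and the nonvanishing/linear-independence of the $\Lambda_{e^H}$. The cleanest route is probably to avoid counting altogether: observe directly that each $\wchi_{\kk}$, being a function on $T_{\ell,0}$, expands in the delta basis, hence lies in the span of $\{\Lambda_{e^H}: e^H\in\spec\}$; combined with Corollary \ref{cor: basis} this shows $\{\Lambda_{e^H}\}$ spans $L^2(T_{\ell,0})^W$, and since they are pairwise orthogonal and nonzero they form an orthogonal basis — the multiplicity-one statement then follows immediately from the one-dimensionality of each $\mathbb{C}(\Lambda_{e^H})$ and Lemma \ref{Lem: spec}, without ever computing $|\CL|$.
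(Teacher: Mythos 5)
Your proposal is correct and follows essentially the same route as the paper: the eigenvector identity $\Pi_A(\wchi_{\kk})\Lambda_{e^H}=\chi_{\kk-\rho}(e^H)\Lambda_{e^H}$ established just before the theorem, Lemma \ref{Lem: spec} for the distinctness of the eigenvalue tuples, and the count $|\spec|=|T_{\ell,0}|/|W|=|\sW_{\ell,0}|/|W|=|\CL|$ coming from the Fourier duality of Proposition \ref{prop: anti}. Your closing observation — that one can bypass the cardinality bookkeeping by expanding any anti-symmetric function in the delta basis and grouping over $W$-orbits to see directly that the $\Lambda_{e^H}$, $e^H\in\spec$, span $L^2(T_{\ell,0})^W$ — is a valid minor streamlining of the same argument rather than a different proof.
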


\begin{proof}
We have shown that for any $e^H\in \spec$, $\mathbb{C}(\Lambda_{e^{H}})$ is the common eigenspace of the adjacency matrices $\Pi_{\ww}$, $\ww\in \fw$, with eigenvalue $\chi_{\ww}(e^H)$.
By Lemma \ref{Lem: spec}, if $e^H$ and $e^{H'}$ are different in $\spec$, then the corresponding eigenvalues are different.

Note that $\dim(L^2(\sWl)^W)=\frac{|W_{\ell,0}|}{|W|}$ and $\dim(L^2(T_{\ell})^W)=\frac{|T_{\ell,0}|}{|W|}$.
By Proposition \ref{prop: anti}, their dimensions are the same. So
$$|\spec|=\frac{|T_{\ell,0}|}{|W|}=\frac{|W_{\ell,0}|}{|W|}=|\CL|.$$
Therefore each eigenvalue corresponding to $e^H\in \spec$ has multiplicity one, and they are all eigenvalues.
\end{proof}

\begin{theorem}\label{thm:subspectrum}
Suppose $\Pi$ is a US-rep of $R_{\ell}$ over $\g$ at level $\ell$.
Then for any common eigenvector $v$ of the adjacency matrices $\Pi_{\ww}$, $\ww\in \fw$, there is $e^H\in \spec$ such that
$$\Pi_{\ww}v=\chi_{\ww}(e^H) v.$$
\end{theorem}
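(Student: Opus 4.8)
The plan is to reduce the statement to Lemma~\ref{Lem: spec} by showing that the common eigenvalues of the adjacency matrices $\Pi_\ww$ are governed by a character-like algebra identity that forces them to be of the form $\chi_\ww(e^H)$ for some point of the maximal torus, and then to rule out the possibility that $e^H$ lies in a mirror. First I would exploit that $\Pi$ is a representation of the ring $R_\ell$ and that $\{\wchi_{\ww+\rho} \mid \ww \in \fw\}$ is a multiplicative basis, so that for a common eigenvector $v$ with $\Pi_\ww v = \lambda_\ww v$ there is a well-defined algebra homomorphism $\phi: R_\ell \to \mathbb{C}$ sending $\wchi_{\ww+\rho} \mapsto \lambda_\ww$, hence $\wchi_\kk \mapsto$ some scalar $\lambda_\kk$ for every $\kk \in \CL$, with $\lambda_\rho = 1$ by unitality. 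Because $R_\ell$ is isomorphic (Theorem~\ref{thm:fusionrule} and the following remark) to the $\ell$-character Verlinde algebra $L^2(T_{\ell,0})^W$ via $\wchi_\kk \mapsto \wchi_\kk$, the homomorphism $\phi$ corresponds to evaluation at a point: the spectrum of the commutative $C^*$-algebra $L^2(T_{\ell,0})^W$ is exactly $\spec = T_{\ell,0}/W$, by Theorem~\ref{thm: A spectrum} which identifies the characters of $\Pi_A$ with the points of $\spec$.

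More concretely, I would run the following steps. Since $\Pi$ is a $*$-representation and the $\Pi_\ww$ pairwise commute (they lie in the image of the commutative ring $R_\ell$), the spectral theorem gives a common eigenvector $v$; write $\Pi_\ww v = \lambda_\ww v$. The map $\wchi_{\ww+\rho} \mapsto \lambda_\ww$ extends multiplicatively and linearly to a unital algebra homomorphism $\phi: R_\ell \to \mathbb{C}$, and the $*$-structure forces $\phi(\wchi_\kk^*) = \overline{\phi(\wchi_\kk)}$, so $\phi$ is a character of the $C^*$-algebra $L^2(T_{\ell,0})^W$. By Gelfand theory the characters of $L^2(T_{\ell,0})^W$ are precisely the evaluations $g \mapsto g(e^H)/\wchi_\rho(e^H)$-type functionals; matching with the description in Theorem~\ref{thm: A spectrum} (where $\Lambda_{e^H}$ realizes the character sending $\wchi_\kk \mapsto \chi_{\kk-\rho}(e^H)$, equivalently $\wchi_{\ww+\rho} \mapsto \chi_\ww(e^H)$), every character of $R_\ell$ equals $\wchi_{\ww+\rho} \mapsto \chi_\ww(e^H)$ for some $e^H \in \spec$. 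Therefore $\lambda_\ww = \chi_\ww(e^H)$, i.e. $\Pi_\ww v = \chi_\ww(e^H) v$, which is the claim.

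The main obstacle is making the Gelfand-theory step airtight, i.e. confirming that \emph{every} unital $*$-character of $R_\ell$ arises from a point of $\spec$ and from no point of $M(T_\ell)$. The counting in Theorem~\ref{thm: A spectrum} already shows $|\spec| = |\CL| = \dim_{\mathbb{C}} R_\ell$, and since $R_\ell$ is a commutative $C^*$-algebra of dimension $|\CL|$ it has exactly $|\CL|$ characters; as the evaluations at distinct points of $\spec$ are distinct characters (again by Lemma~\ref{Lem: spec}, parts (1)$\leftrightarrow$(3)), these account for all of them. One must also check that the point cannot lie in a mirror: if $e^H \in M(T_\ell)$ then $\wchi_\rho(e^H) = 0$ by the Corollary following Proposition~\ref{prop:asym}, and the would-be evaluation functional on $L^2(T_{\ell,0})^W$ is not defined there — equivalently such a point does not appear among the $|\CL|$ characters already exhausted by $\spec$. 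So the eigenvalue datum $(\lambda_\ww)$ necessarily matches some $e^H \in T_{\ell,0}$, completing the proof. A final bookkeeping point is that a priori $v$ need not be a multiple of some $\Lambda_{e^H}$ inside $L^2(T_{\ell,0})^W$ — indeed $\Pi$ is an arbitrary US-rep, not the regular one — but we only need the \emph{eigenvalue}, not the eigenvector, so this causes no trouble.
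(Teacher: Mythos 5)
Your proposal is correct and follows essentially the same route as the paper: the paper's own proof observes that $L^2(T_{\ell,0})^W$ is an abelian $C^*$-algebra whose one-dimensional representations are all subrepresentations of the regular representation $\Pi_A$, and then invokes Theorem \ref{thm: A spectrum}, which is exactly your Gelfand-theoretic argument with the character count $|\spec|=|\CL|=\dim R_\ell$ made explicit. Your additional remarks (multiplicativity of the eigenvalue functional via the multiplicative basis, and the exclusion of mirror points) are just a more detailed unpacking of the same idea.
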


\begin{proof}
Since $L^2(T_{\ell,0})^W$ is an abelian $C^*$ algebra, all its one dimensional representations are sub-representations of the regular representation $\Pi_A$.
So any simultaneous eigenvalue of $\Pi$ is always a simultaneous eigenvalue of $\Pi_A$. By Theorem \ref{thm: A spectrum}, the statement holds.
\end{proof}

\textcolor{black}{The following result is well-known in different formulations, see e.g. \cite{EtiKho95}:}

\begin{theorem}
Quantum Dynkin diagrams over $\mathfrak{sl}_2$ at level $\ell\in \mathbb{N}$ are $ADE$ Dynkin diagrams with Coxeter number $2+\ell$.
\end{theorem}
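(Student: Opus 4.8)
The plan is to show that a quantum Dynkin diagram over $\ssl_2$ at level $\ell$ is exactly the data of a connected, finite, loop-free graph whose adjacency matrix has operator norm $<2$, and then invoke the classical classification of such graphs. First I would specialize the general machinery: for $\g=\ssl_2$ we have $\rk=1$, $\fw=\{\ww_1\}$, so $R_\ell$ is generated as a ring by the single element $\wchi_{\ww_1+\rho}$, and a quantum Dynkin diagram $\G$ is precisely a $\Z_2$-graded quiver together with the requirement that its single adjacency matrix $\Delta:=\Delta_{\ww_1}$ extends to a $\Z_2$-graded unital $*$-homomorphism $\Pi_G\colon R_\ell\to M_{\G_v}(\Z)$. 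Since $R_\ell$ is the polynomial-type algebra generated by one self-adjoint element subject to the truncated Chebyshev relation (the $\ssl_2$ fusion rule cutting off at level $\ell$), giving such a homomorphism is equivalent to giving a symmetric integer matrix $\Delta$ with zero diagonal (the grading forces $\langle s(e),t(e)\rangle$ to differ, so there are no self-loops, and $*$-invariance together with $\Omega=\mathrm{id}$ on $\ssl_2$ forces $\Delta$ symmetric) satisfying the single polynomial constraint coming from $\wchi_{(\ell+1)\ww_1+\rho}$ being sent appropriately — concretely that the minimal polynomial of $\Delta$ divides the degree-$(\ell+1)$ Chebyshev-like polynomial whose roots are $2\cos\frac{k\pi}{2+\ell}$.

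Next I would extract the spectral condition. By Theorem~\ref{thm:subspectrum} applied to the US-rep $\Pi_G$, every eigenvalue of $\Delta=\Pi_{\ww_1}$ is of the form $\chi_{\ww_1}(e^H)$ for some $e^H\in\spec$. For $\ssl_2$, $T_{\ell,0}/W$ is identified with the alcove $\CL$, which has $|\CL|=\ell+1$ points, and $\chi_{\ww_1}(e^H)$ ranges over exactly the values $2\cos\frac{k\pi}{2+\ell}$ for $k=1,\dots,\ell+1$; in particular every eigenvalue $\lambda$ of $\Delta$ satisfies $|\lambda|<2$, i.e. $\|\Delta\|<2$. Conversely, I must check that a connected graph with $\|\Delta\|<2$ and integer entries actually does give rise to such a homomorphism — this is where the truncation has to be accounted for: the eigenvalues of such a $\Delta$ are algebraic integers in the interval $(-2,2)$, and one uses the fact that the Coxeter number of an $ADE$ graph with largest eigenvalue $2\cos\frac{\pi}{h}$ matches $2+\ell$ precisely when $\|\Delta\|<2$ with the relevant denominator; the relation defining $R_\ell$ is then automatically satisfied because the Chebyshev polynomial of degree $\ell+1$ annihilates every eigenvalue of $\Delta$ when $h\mid 2+\ell$ appropriately. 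I would phrase this via functional calculus: the map $\wchi_{\ww_1+\rho}\mapsto\Delta$ extends to $R_\ell$ iff $p_{\ell+1}(\Delta)=0$ where $p_{\ell+1}$ is the image of the "boundary" character, and this holds iff $\mathrm{spec}(\Delta)\subseteq\{2\cos\frac{k\pi}{2+\ell}\}$.

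Then I invoke the classification: the connected finite loop-free graphs whose adjacency matrix has norm strictly less than $2$ are precisely the $ADE$ Dynkin diagrams (type $A_n$, $D_n$, $E_6,E_7,E_8$) — this is the classical result, e.g. from the theory of subfactors or from the Perron–Frobenius analysis of graphs with $\|\Delta\|^2<4$. For such a graph the largest eigenvalue is $2\cos\frac{\pi}{h}$ where $h$ is the Coxeter number, and matching with the allowed eigenvalues $2\cos\frac{k\pi}{2+\ell}$ forces $h=2+\ell$; conversely each $ADE$ graph with $h=2+\ell$ does admit the $\Z_2$ grading (bipartiteness of $ADE$ diagrams) and its adjacency matrix satisfies the truncated fusion relation, so it is a quantum Dynkin diagram over $\ssl_2$ at level $\ell$.

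\textbf{Main obstacle.} I expect the delicate point to be the converse direction — verifying that a connected integer graph with $\|\Delta\|<2$ genuinely extends to a $\Z_2$-graded unital $*$-homomorphism of $R_\ell$, rather than merely having eigenvalues in the right set. One must confirm that (i) the $\Z_2$-grading required by the definition of a $\g$-graded quiver exists, which uses that graphs with $\|\Delta\|<2$ are bipartite (true for $ADE$), and (ii) the single defining relation of $R_\ell$ — the vanishing of the image of $\wchi_{(\ell+1)\ww_1+\rho}$ — is implied by the spectral bound together with $h=2+\ell$, which is exactly the statement that the Chebyshev polynomial $U_{\ell+1}$ (suitably normalized) has all the numbers $2\cos\frac{k\pi}{2+\ell}$ as roots. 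Keeping the bookkeeping of the cutoff straight, and making sure the grading on edges (all edges graded by the unique $\ww_1$, so the grading condition reduces to bipartiteness) is handled correctly, is the part that needs care; the rest is assembling Theorem~\ref{thm:subspectrum}, the count $|\CL|=\ell+1$, and the classical $ADE$ classification.
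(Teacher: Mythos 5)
Your forward direction coincides with the paper's: apply Theorem~\ref{thm:subspectrum} to see that every eigenvalue of the single adjacency matrix $\Pi_{\ww_1}$ lies in $\{2\cos\frac{k\pi}{2+\ell}\}$, hence $\|\Pi_{\ww_1}\|<2$, and then invoke the classical classification of connected graphs of norm less than $2$. Where you genuinely differ is in the converse and in pinning down the Coxeter number. The paper disposes of the converse by citing the known fact that the $ADE$ diagrams are the quivers of module categories over quantum $\ssl_2$ (so the graded unital $*$-homomorphism exists for free), and handles the Coxeter number with the one-line remark that ``$R_\ell$ is determined by $\Pi_1$''; you instead verify directly, via functional calculus, that the one defining relation of $R_\ell$ --- the vanishing of the truncated Chebyshev polynomial $U_{\ell+1}(\Delta/2)$ --- is satisfied, and you reduce the grading condition to bipartiteness. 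Your route is more self-contained, and it has the virtue of exposing the one step both arguments leave soft: matching the Perron--Frobenius eigenvalue $2\cos(\pi/h)$ against the allowed set $\{2\cos\frac{k\pi}{2+\ell}\}$ only yields $h\mid(2+\ell)$, not $h=2+\ell$. Indeed, with the paper's Definition~\ref{def:dynkin} (which asks only for integer, not natural, matrix entries) the $A_2$ adjacency matrix satisfies $U_{5}(\Delta/2)=0$ and so supports a $\Z_2$-graded unital $*$-homomorphism out of $R_4$, even though its Coxeter number is $3\neq 6$; to force $h=2+\ell$ one must use naturalness (positivity of all $\Pi(\wchi_\kk)$, so that the Perron--Frobenius data sits at $k=1$) or an equivalent input. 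Neither your sketch nor the paper makes this explicit, so you should either add the naturalness hypothesis or supply that last step.
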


\begin{proof}
If $\G$ is an $ADE$ Dynkin diagram, 
then they are $\Z_2$-graded quivers of modules of quantum $\mathfrak{sl}_2$.
So they are quantum Dynkin diagrams over $\mathfrak{sl}_2$ at level $\ell$, and $\ell+2$ is the Coxeter number of $\G$.

If $\G$ be a quantum Dynkin diagram over $\mathfrak{sl}_2$ at level $\ell$, then 
$\Pi_1$ is the adjacency matrix of the bipartite graph $\G$. 
By Theorem \ref{thm:subspectrum}, any eigenvalue of $\Pi_1$ is $e^{\frac{\pi i}{\ell+2}t}-e^{-\frac{\pi i}{\ell+2}t}$, for some $t\in \Z_{2(\ell+2)}$, $t\neq 0$ and $t\neq \ell+2$.
So $\|\Pi_1\|<2$. 
Therefore $\G$ is an $ADE$ Dynkin diagram, a well known result. Moreover, $R_{\ell}$ is determined by $\Pi_1$, so $\ell$ is determined.
\end{proof}

\begin{definition}
Suppose $\Pi$ is a US-rep of $R_{\ell}$ over $\g$ at level $\ell$.
For a common eigenvector $v$ of the adjacency matrices, we call the corresponding $e^H \in \spec$ in Theorem \ref{thm:subspectrum} the spectrum of $v$, denoted by $\text{sp}(v)$. 
\end{definition}

\begin{definition}
Suppose $\Pi$ is a US-rep of $R_{\ell}$ over $\g$ at level $\ell$.
Let $B$ be a common eigenbasis of the adjacency matrices.
We define the spectrum of $\Pi$ to be $\{$sp$(v), v \in B\}$, a subset of $\spec$. 
We define $B(e^H)$ to be the subset of $B$ with spectrum $e^H$.
The multiplicity of the spectrum $e^H\in \spec$ is the order $|B(e^H)|$, denoted by $m_{\Pi}(e^H)$.
\end{definition}
Since the Weyl Character is symmetric under the action of $W$, we can lift the spectrum from $\spec$ to $T_{\ell,0}$.
We identify $\spec$ as a fundamental domain in $T_{\ell,0}$, still denoted by $\spec$. Then for any $e^H \in T_{\ell,0}$, $\theta\in W$, we have $m_{\Pi}(e^H)=m_{\Pi}(e^{\theta(H)})$.

\begin{notation}
Furthermore, if $V$ and  $\Pi$ are $Z(\g)$ graded, then
for any $v \in  L^2(B)$, 
we have the decomposition 
$$\displaystyle v=\sum_{\kk \in Z(\g)} v_{\kk},$$
where $v_{\kk} \in V_k$, called a {\it $\kk$-graded vector}.
\end{notation}
\begin{definition}
Suppose $\Pi$ is a GUS-rep of $R_{\ell}$ over $\g$ at level $\ell$. 
We define a group action of $ T_{\sR}$ on $L^2(B)$: For any $e^{H'} \in T_{\sR}$ and any $\kk$-graded vector $v_{\kk}$,
$$e^{H'} \circ  v_{\kk} :=e^{i \langle \kk, H' \rangle} v_{\kk}.$$
\end{definition}

\begin{theorem}\label{Thm:mp=mp}
Suppose $\Pi$ is a GUS-rep of $R_{\ell}$ over $\g$ at level $\ell$. 
For any $e^H\in T_{\ell} $ and $e^{H'} \in T_{\sR}$, the following are equivalent:
\begin{itemize}
\item[(1)] $\Pi_{\ww}(v)=\chi_{\ww}(e^{H}) v$;
\item[(2)] $\Pi_{\ww}(e^{H'} \circ v)=\chi_{\ww}(e^{H-H'}) e^{H'} \circ v.$
\end{itemize}
Consequently,
$m_{\Pi}(e^H)=m_{\Pi}(e^{H-H'})$.
\end{theorem}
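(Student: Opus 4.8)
The plan is to prove the equivalence (1) $\Leftrightarrow$ (2) by a direct computation that tracks how the grading interacts with the action of the adjacency matrices. First I would fix an eigenvector $v$ with $\Pi_{\ww}(v)=\chi_{\ww}(e^H)v$ and decompose it into graded pieces $v=\sum_{\kk\in Z(\g)} v_\kk$ with $v_\kk\in V_\kk$. Since $\Pi$ is a GUS-rep, property (3) of Definition~\ref{Def:gus} gives $\Pi_{\ww}(V_\kk)\subseteq V_{\kk+\ww}$ (writing $\ww$ for its image in $Z(\g)$, i.e. $\ww+\rho-\rho$), so the eigenvector equation $\Pi_\ww(v)=\chi_\ww(e^H)v$ decouples: for each $\kk$ we must have the $V_{\kk+\ww}$-component of $\Pi_\ww(v_\kk)$ equal to $\chi_\ww(e^H)v_{\kk+\ww}$. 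This is the structural fact that makes the twist by $e^{H'}\in T_{\sR}$ transparent.

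Next I would apply the twist. By definition $e^{H'}\circ v=\sum_\kk e^{i\langle\kk,H'\rangle}v_\kk$. Applying $\Pi_\ww$ and using that $\Pi_\ww$ sends the $\kk$-graded piece into the $(\kk+\ww)$-graded piece, one computes
\begin{align*}
\Pi_\ww(e^{H'}\circ v)&=\sum_{\kk} e^{i\langle\kk,H'\rangle}\,\Pi_\ww(v_\kk)
=\sum_{\kk} e^{i\langle\kk,H'\rangle}\,\chi_\ww(e^H)\,v_{\kk+\ww}\\
&=\chi_\ww(e^H)\,e^{-i\langle\ww,H'\rangle}\sum_{\kk} e^{i\langle\kk+\ww,H'\rangle}v_{\kk+\ww}
=\chi_\ww(e^H)\,e^{-i\langle\ww,H'\rangle}\,(e^{H'}\circ v).
\end{align*}
So it remains to identify $\chi_\ww(e^H)\,e^{-i\langle\ww,H'\rangle}$ with $\chi_\ww(e^{H-H'})$. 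Here I would use the Weyl character formula together with the observation that $e^{H'}\in T_{\sR}$ means $\langle\rr,H'\rangle\in 2\pi\Z$ for all $\rr$ in the root lattice $\sR$ (that is exactly the defining condition of $T_{\sR}$ via Definition~\ref{Def:TA} with $A=\sR$). Consequently for every $\theta\in W$ and every weight $\ww$, the difference $\theta(\ww+\rho)-(\ww+\rho)$ lies in $\sR$, so $e^{i\langle\theta(\ww+\rho),H'\rangle}=e^{i\langle\ww+\rho,H'\rangle}$, and similarly with $\rho$ in place of $\ww+\rho$. Plugging $e^{H-H'}$ into the numerator and denominator of the Weyl character formula and factoring out $e^{-i\langle\ww+\rho,H'\rangle}$ from the numerator and $e^{-i\langle\rho,H'\rangle}$ from the denominator yields exactly $e^{-i\langle\ww,H'\rangle}\chi_\ww(e^H)$, which proves (1)$\Rightarrow$(2).

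For the converse (2)$\Rightarrow$(1), I would note that $T_{\sR}$ is a group and $e^{-H'}\in T_{\sR}$, so applying the already-proven implication to the eigenvector $e^{H'}\circ v$ with the shift $e^{-H'}$ recovers (1); alternatively the computation above is manifestly reversible. Finally, the multiplicity statement $m_\Pi(e^H)=m_\Pi(e^{H-H'})$ follows because the map $v\mapsto e^{H'}\circ v$ is a linear bijection of $L^2(B)$ (it is diagonal with unimodular entries $e^{i\langle\kk,H'\rangle}$ in the graded decomposition), and by the equivalence it carries the simultaneous eigenspace $B(e^H)$ bijectively onto $B(e^{H-H'})$; one should also remark that $e^{H-H'}$ again lies in $T_{\ell,0}$ since $T_{\sR}\subseteq T_\ell$ preserves $T_\ell$ and the twist cannot move a non-mirror point onto a mirror (the values $\chi_\ww$ at $e^{H-H'}$ are simultaneous eigenvalues of a US-rep, hence by Theorem~\ref{thm:subspectrum} correspond to a point of $\spec$). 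The only real subtlety is the bookkeeping in the character-formula step — making sure the root-lattice membership is invoked for \emph{both} numerator and denominator so that the spurious phases cancel cleanly; everything else is formal.
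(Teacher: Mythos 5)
Your proposal is correct and follows essentially the same route as the paper: decompose the eigenvector into $Z(\g)$-graded pieces, compute that the twist produces the extra phase $e^{-i\langle \ww, H'\rangle}$, and absorb that phase into the character using that $\theta(\ww+\rho)-(\ww+\rho)$ and $\theta(\rho)-\rho$ lie in $\sR$ (the paper phrases this last identity via the $\ell$-characters $\wchi$, which is the Weyl numerator/denominator computation you describe). Your added remarks on the converse via $e^{-H'}$ and on the multiplicity claim via the unitary bijection $v\mapsto e^{H'}\circ v$ are correct and merely make explicit what the paper leaves implicit.
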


\begin{proof}
Suppose $\displaystyle v=\sum_{\kk \in Z(\g)} v_{\kk}$, where $v_{\kk}$ is graded by $\kk$.
If $\Pi_{\ww}(v)=\chi_{\ww}(e^{H}) v$,
then 
$$\chi_{\ww}(e^H)v=\Pi_{\ww} v=\sum_{\kk \in Z(\g)} \Pi_{\ww} v_{\kk}.$$ 
So $\Pi_{\ww} v_{\kk}=\chi_{\ww}(e^H) v_{\kk+\ww}$.
For any $e^{H'} \in T_{\sR}$,
$$\Pi_{\ww}(e^{H'} \circ v)=\sum_{\kk \in Z(\g)} e^{i \langle \kk, H' \rangle}  \Pi_{\ww}v_{\kk}=\sum_{\kk \in Z(\g)} e^{i \langle \kk, H' \rangle}\chi_{\ww}(e^H) v_{\kk+\ww}=\chi_{\ww}(e^H) e^{i \langle -\ww, H' \rangle} (e^{H'} \circ v).
$$
For any $\kk\in \sWl$, $\theta \in W$, we have $\theta(\kk)-\kk \in \sRl$.
For any $e^{H'} \in T_{\sR}$,  
$$\wchi_{\kk}(e^{H+H'})= \wchi_{\kk}(e^{H})e^{i \langle \kk, H' \rangle}.$$
So,
$$\chi_{\ww}(e^H) e^{i \langle \ww, H' \rangle}=\frac{\wchi_{\ww+\rho}(e^H) e^{i \langle \ww+\rho, H' \rangle}}{\wchi_{\rho}(e^H) e^{i \langle \rho, H' \rangle}} =\frac{\wchi_{\ww+\rho}(e^{H+H'})}{\wchi_{\rho}(e^{H+H'})}=\chi_{\ww}(e^{H+H'}).$$
Therefore $$\Pi_{\ww}(e^{H'} \circ v)=\chi_{\ww}(e^{H-H'}) e^{H'} \circ v.$$

\end{proof}

\section{Quantum Coxeter exponents}\label{Sec: quantum Coxeter exponents}

In this section, we generalize the root space, Coxeter elements and exponents for US-rep over $\g$ at level $\ell$, and answer the questions of Kac and Gannon.

Suppose $\Pi$ is a US-rep of $R_{\ell}$ over $\g$ at level $\ell$, and $B$ is a common eigenbasis of $\Pi_{\ww}$, $\ww \in \fw$.
We lift the spectrum of $\Pi$ from $\spec$ to $T_{\ell,0}$ and define the corresponding eigenspace:

\begin{definition}\label{Def: el}
We define the lifted eigenspace  $\El$ as
$$\El:= \{ \tg \in L^2(T_{\ell}) \otimes \hil : (\chi_{\ww} \otimes I-I\otimes \Pi_{\ww}) \tg=0, \; \forall \; \ww \in \fw \}.$$
\end{definition}

\begin{proposition}\label{prop: eigenbasis}
The set $\Bl:=\{ \delta_{e^H} \otimes v : e^H \in T_{\ell,0}, v\in B(e^H) \}$ is
an ONB of $\El$.
\end{proposition}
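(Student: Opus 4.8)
The plan is to show that $\Bl$ is an orthonormal set spanning $\El$, using the structure of $\El$ as the joint kernel of the commuting operators $\chi_{\ww}\otimes I - I\otimes\Pi_{\ww}$ together with the spectral description of $\Pi$ already established.

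First I would verify that each $\delta_{e^H}\otimes v$ with $e^H\in T_{\ell,0}$ and $v\in B(e^H)$ actually lies in $\El$. For such a vector, $(\chi_{\ww}\otimes I)(\delta_{e^H}\otimes v)=\chi_{\ww}(e^H)\,\delta_{e^H}\otimes v$ since $\delta_{e^H}$ is supported at the single point $e^H$, while $(I\otimes\Pi_{\ww})(\delta_{e^H}\otimes v)=\delta_{e^H}\otimes\Pi_{\ww}v=\chi_{\ww}(e^H)\,\delta_{e^H}\otimes v$ by the definition of $B(e^H)$ and Theorem \ref{thm:subspectrum}; hence the difference kills it, for every $\ww\in\fw$, so the vector is in $\El$. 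Orthonormality is immediate: the $\delta_{e^H}$ are pairwise orthogonal and normalized in $L^2(T_\ell)$ (with counting-type measure on the finite set $T_\ell$, after suitable normalization), and for fixed $e^H$ the set $B(e^H)\subseteq B$ is orthonormal in $\hil$ since $B$ is an ONB; so the tensor products form an orthonormal family.

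The substantive step is completeness: every $\tg\in\El$ is a linear combination of the $\delta_{e^H}\otimes v$. Expand $\tg=\sum_{e^H\in T_\ell}\delta_{e^H}\otimes w_{e^H}$ with $w_{e^H}\in\hil$. Applying $\chi_{\ww}\otimes I-I\otimes\Pi_{\ww}$ and using that the $\delta_{e^H}$ are linearly independent, the equation $(\chi_{\ww}\otimes I-I\otimes\Pi_{\ww})\tg=0$ forces, for each $e^H$, $\chi_{\ww}(e^H)w_{e^H}=\Pi_{\ww}w_{e^H}$, i.e.\ $w_{e^H}$ is a common eigenvector of all $\Pi_{\ww}$ with eigenvalue $\chi_{\ww}(e^H)$ (or $w_{e^H}=0$). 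Two things remain to pin down: that $w_{e^H}$ must vanish when $e^H\in M(T_\ell)$, and that for $e^H\in T_{\ell,0}$ it lies in the span of $B(e^H)$. For the first, if $e^H$ is in a mirror then $e^H=e^{\theta_\rr(H)}$ for some $\rr$; but by Theorem \ref{thm:subspectrum} every common eigenvalue of $\Pi$ equals $\chi_{\ww}(e^{H''})$ for some $e^{H''}\in\spec\subset T_{\ell,0}$, and by Lemma \ref{Lem: spec} a point on a mirror is not $W$-equivalent to any point of $T_{\ell,0}$, so no nonzero common eigenvector can have eigenvalue vector $(\chi_{\ww}(e^H))_{\ww}$; hence $w_{e^H}=0$. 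For the second, by Theorem \ref{thm:subspectrum} and Lemma \ref{Lem: spec}, the common eigenspace of the $\Pi_{\ww}$ with eigenvalues $\chi_{\ww}(e^H)$ is exactly the span of those $v\in B$ with $\mathrm{sp}(v)=e^H$ up to $W$-equivalence — but since we are working with $e^H\in T_{\ell,0}$ directly and $B$ is a common eigenbasis, this span is $\mathrm{span}\,B(e^H)$. Therefore $w_{e^H}\in\mathrm{span}\,B(e^H)$ and $\tg$ expands as claimed.

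I expect the main obstacle to be bookkeeping around the $W$-action and the identification of $\spec$ with a fundamental domain in $T_{\ell,0}$: one must be careful that $B(e^H)$ was defined via the fundamental domain $\spec$, and that the formula $\tg=\sum_{e^H\in T_\ell}\delta_{e^H}\otimes w_{e^H}$ ranges over all of $T_\ell$, so the argument that $w_{e^H}=0$ off $T_{\ell,0}$ and the matching of eigenspaces must be stated so as not to double-count $W$-orbits. Once the convention is fixed (using the already-noted fact that $m_\Pi(e^H)=m_\Pi(e^{\theta(H)})$ and lifting $B$ consistently), the completeness count is forced by dimensions: $\dim\El=\sum_{e^H\in T_{\ell,0}}|B(e^H)|$ versus $|\Bl|$, which agree. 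This is short enough that I would present it as a direct verification rather than invoking an abstract dimension argument, but the dimension count is a useful consistency check.
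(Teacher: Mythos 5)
Your proof is correct and follows essentially the same route as the paper's: expand a general element of $\El$ over the product basis $\{\delta_{e^H}\otimes v\}$, use the defining relations to force the eigenvalue matching $\chi_{\ww}(e^H)=\chi_{\ww}(\mathrm{sp}(v))$ for all $\ww\in\fw$, and invoke Lemma \ref{Lem: spec} to conclude $v\in B(e^H)$ (with the mirror points contributing nothing). Your explicit handling of the case $e^H\in M(T_\ell)$ and of the $W$-orbit bookkeeping is a slightly more careful write-up of the same argument, not a different one.
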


\begin{proof}

Note that the set
$$\{ \delta_{e^H} \otimes v : e^H \in T_{\ell}, v\in B \}$$ 
is an ONB of  $L^2(T_{\ell})\otimes \hil$, so for any $\tg \in L^2(T_{\ell}) \otimes \hil$, we have the decomposition
$$\tg=\sum_{e^H \in T_{\ell}} \sum_{v \in B} \lambda_{e^H,v} \delta_{e^H} \otimes v, $$
for some $ \lambda_{e^H,v} \in \mathbb{C}$.
If $\tg\in \El$, namely $(\chi_{\ww} \otimes I)\tg=(I\otimes \Pi_{\ww}) \tg,$ for any $\ww \in \fw$, then
$$\sum_{e^H \in T_{\ell}} \sum_{v \in B} \chi_{\ww}(e^H) \lambda_{e^H,v} \delta_{e^H} \otimes v=\sum_{e^H \in T_{\ell}} \sum_{v \in B} \chi_{\ww}(sp(v)) \lambda_{e^H,v} \delta_{e^H} \otimes v.$$
If $\lambda_{e^H,v} \neq 0$, then $ \chi_{\ww}(e^H)=\chi_{\ww}(sp(v))$, $\forall~ \ww\in \fw$.
By Lemma \ref{Lem: spec}, there is an $\theta \in W$, such that $sp(v)=e^{\theta(H)}$, equivalently $v \in B(e^H)$.

On the other hand, for any $v\in B(e^H)$, we have 
$$(\chi_{\ww} \otimes I -I\otimes \Pi_{\ww})(\delta_{e^H} \otimes v)=(\chi_{\ww}(e^H)-\chi_{\ww}(e^H) ) (\delta_{e^H} \otimes v)=0,$$
so $\delta_{e^H} \otimes v \in \El$.
Therefore $\Bl$ is an ONB of $\El$.
\end{proof}

\begin{definition}
For any $f\in L^2(\sWl)$, we define 
$$(\Delta_{\kk}f)(\jj)=\sum_{\ssb \in \mathcal{W}(\kk)} m_{\kk}(\ssb) f(\jj+\ssb), \forall j\in \sWl,$$
where $\mathcal{W}(\kk)$ is the weight diagram of $\kk \in \mathcal{C}$, and $m_{\kk}$ is the multiplicity function.
\end{definition}

\begin{lemma}\label{Lem: fd=wf}
Let $\mathcal{F}:L^2(\sWl) \to L^2(T_{\ell})$ be the Fourier transform in Equation \eqref{Equ:F}. Then for any $\ww\in \fw$, we have
\begin{align*}
\mathcal{F} \Delta_{\ww} = \chi_{\ww} \mathcal{F}.
\end{align*}
\end{lemma}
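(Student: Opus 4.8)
The statement is the intertwining identity $\mathcal{F}\Delta_{\ww} = \chi_{\ww}\mathcal{F}$ on $L^2(\sWl)$, where $\Delta_{\ww}$ acts by convolution with the weight multiplicity function of the fundamental representation $V_{\ww}$, and $\chi_{\ww}$ acts by pointwise multiplication on $L^2(T_\ell)$. The approach is a direct computation: apply both sides to an arbitrary $f \in L^2(\sWl)$, evaluate at a point $e^H \in T_\ell$, and use the Weyl character formula together with the definition of the Fourier transform in \eqref{Equ:F}. The key algebraic input is that the character of $V_{\ww}$ is the generating function of its weights with multiplicities, i.e. $\chi_{\ww}(e^H) = \sum_{\ssb \in \mathcal{W}(\ww)} m_{\ww}(\ssb)\, e^{i\langle \ssb, H\rangle}$.

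First I would write out, for $e^H \in T_\ell$,
\[
(\mathcal{F}\Delta_{\ww} f)(e^H) = \sum_{\jj \in \sWl} (\Delta_{\ww} f)(\jj)\, e^{i\langle \jj, H\rangle} = \sum_{\jj \in \sWl} \sum_{\ssb \in \mathcal{W}(\ww)} m_{\ww}(\ssb)\, f(\jj+\ssb)\, e^{i\langle \jj, H\rangle}.
\]
Then reindex the inner sum by $\jj' = \jj + \ssb$; since the sum over $\jj$ runs over the finite abelian group $\sWl$ and each $\ssb \in \mathcal{W}(\ww) \subset \sW$ reduces to an element of $\sWl$, the substitution $\jj \mapsto \jj' - \ssb$ is a bijection of $\sWl$, giving
\[
(\mathcal{F}\Delta_{\ww} f)(e^H) = \sum_{\ssb \in \mathcal{W}(\ww)} m_{\ww}(\ssb)\, e^{-i\langle \ssb, H\rangle} \sum_{\jj' \in \sWl} f(\jj')\, e^{i\langle \jj', H\rangle} = \overline{\chi_{\ww}(e^H)}\,(\mathcal{F}f)(e^H).
\]
One must then reconcile the apparent complex conjugate: the weight diagram $\mathcal{W}(\ww)$ is symmetric under $\kk \mapsto -\kk$ up to the Weyl group and the multiplicity function is $W$-invariant, so $\sum_{\ssb} m_{\ww}(\ssb) e^{-i\langle \ssb, H\rangle} = \sum_{\ssb} m_{\ww}(\ssb) e^{i\langle \ssb, H\rangle} = \chi_{\ww}(e^H)$; alternatively, one checks the sign convention in $\Delta_{\kk}$ — here the cleanest route is to note that the Fourier transform convention \eqref{Equ:F} pairs translation by $\ssb$ with the character $e^{i\langle \ssb, H\rangle}$, and to be careful that $\Delta_\ww$ is defined with $f(\jj + \ssb)$ so that the character rather than its conjugate appears. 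Either way the two sides agree.

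\textbf{Main obstacle.} There is no deep obstacle; the proof is a reindexing argument. The one point requiring genuine care is the well-definedness of the reindexing on the quotient group $\sWl = \sW/\sRl$: one needs that $\Delta_\ww$ is well-defined on $L^2(\sWl)$ in the first place (i.e. that convolution with the weight function descends to the quotient, which holds because $\sRl$ is a subgroup), and that translating the summation variable by the projected weight is a bijection of the finite group $\sWl$. The second subtle point, as noted above, is tracking the sign/conjugation convention so that $\chi_{\ww}$ rather than $\overline{\chi_{\ww}}$ emerges; this is resolved using the $W$-symmetry (hence $\pm$-symmetry) of the weight multiplicities, which guarantees $\chi_{\ww}$ is real-valued-in-structure precisely on the relevant torus points, making the distinction moot.
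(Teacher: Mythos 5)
Your computation follows the same route as the paper's (the paper applies both sides to a delta function $\delta_{\kk}$ and reindexes; you do the equivalent reindexing for a general $f$), and you correctly arrive at $(\mathcal{F}\Delta_{\ww}f)(e^H)=\overline{\chi_{\ww}(e^H)}\,(\mathcal{F}f)(e^H)$ from the literal definition $(\Delta_{\ww}f)(\jj)=\sum_{\ssb}m_{\ww}(\ssb)f(\jj+\ssb)$. The gap is in how you dispose of the conjugate. The identity $\sum_{\ssb}m_{\ww}(\ssb)e^{-i\langle\ssb,H\rangle}=\sum_{\ssb}m_{\ww}(\ssb)e^{i\langle\ssb,H\rangle}$ is equivalent to $m_{\ww}(\ssb)=m_{\ww}(-\ssb)$ for all $\ssb$, i.e.\ to $V_{\ww}$ being self-dual. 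The weight multiplicities are indeed $W$-invariant, but $-1$ need not lie in $W$ (the longest element $w_0$ need not act as $-1$), so $-\mathcal{W}(\ww)=\mathcal{W}(-w_0\ww)=\mathcal{W}(\ww^*)$, which differs from $\mathcal{W}(\ww)$ whenever $\ww^*\neq\ww$. Concretely, for the standard representation of $\ssl_3$ the weights are $v_1,v_2,v_3$ with $v_1+v_2+v_3=0$, and $-v_1=v_2+v_3$ is not among them; correspondingly $\chi_{\ww}(e^H)=e^{i\langle v_1,H\rangle}+e^{i\langle v_2,H\rangle}+e^{i\langle v_3,H\rangle}$ is not real at generic torus points. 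So your symmetry argument fails for every non-self-dual fundamental representation, and ``either way the two sides agree'' is not justified. (Your parenthetical alternative is also backwards: it is the convention $f(\jj-\ssb)$, not $f(\jj+\ssb)$, that makes $\chi_{\ww}$ rather than $\overline{\chi_{\ww}}$ appear.)

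What your computation actually uncovers is a sign-convention mismatch in the paper itself: the paper's one-line proof silently replaces $\Delta_{\ww}\delta_{\kk}=\sum_{\ssb}m_{\ww}(\ssb)\delta_{\kk-\ssb}$ by $\sum_{\ssb}m_{\ww}(\ssb)\delta_{\kk+\ssb}$, i.e.\ it implicitly uses the convention $(\Delta_{\ww}f)(\jj)=\sum_{\ssb}m_{\ww}(\ssb)f(\jj-\ssb)$. With that definition your reindexing produces $e^{+i\langle\ssb,H\rangle}$ and the lemma follows exactly as you compute, with no conjugate to explain away; with the definition as printed, the correct conclusion is $\mathcal{F}\Delta_{\ww}=\chi_{\ww^*}\mathcal{F}=\overline{\chi_{\ww}}\,\mathcal{F}$. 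The honest fix is to adjust the convention in the definition of $\Delta_{\ww}$ (or replace $\chi_{\ww}$ by $\chi_{\ww^*}$ in the statement), not to invoke a negation symmetry of the weight diagram that does not hold in general.
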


\begin{proof}
For any $\kk\in \sWl$,
\begin{align*}
\mathcal{F} \Delta_{\ww} (\delta_{\kk})
=\sum_{e^H \in T_{\ell}} \sum_{\ssb \in \mathcal{\ww}} m_{\ww}(\ssb) e^{i \langle \kk+\ssb, H \rangle}  \delta_{e^H}
=\sum_{e^H\in T_{\ell}} \chi_{\ww}(e^H) e^{i \langle \kk, H \rangle} \delta_{e^H}
=\chi_{\ww} \mathcal{F}(\delta_{\kk}).
\end{align*}

\end{proof}

\begin{definition}\label{Def: hl}
For a US-rep $\Pi$,
we define the full quantum root space $\Hl$ as
$$\Hl:=\{ \tf \in L^2(\sWl) \otimes \hil : (\Delta_{\ww} \otimes I-I\otimes \Pi_{\ww})\tf=0, \forall ~\ww \in \fw \}.$$
Furthermore, if $\Pi$ is $Z(\g)$-graded,
we define the quantum root space $\Hlz$ as
$$\Hlz:=\{ \tf \in \bigoplus_{\kk\in \sWl} \mathbb{C}\delta_{\kk} \otimes \hil_{-\kk} : (\Delta_{\ww} \otimes I-I\otimes \Pi_{\ww})\tf=0, \forall ~\ww \in \fw \}.$$ 
\end{definition}

\begin{remark}
When $G$ is an $ADE$ Dynkin diagram, $\Pi_G$ is a GUS-rep graded by $\Z_2=Z(\ssl_2)$, and $\Hlz$ is the space of additive functions on the corresponding Auslander-Reiten quiver, isomorphic to the root space. 
\end{remark}

\begin{notation}
Let $P_{\El}$ be the orthogonal projection from $L^2(T_{\ell}) \times \hil$ to $\El$.
Let $P_{\Hl}$ be the orthogonal projection from $L^2(\sWl) \otimes \hil$ to $\Hl$.
\end{notation}

\begin{proposition}\label{prop: fp=pf}
The Fourier transform $\mathcal{F} \otimes I$ is a unitary transformation from $\Hl$ to $\El$,
namely
$$(\mathcal{F} \otimes I) P_{\Hl}=P_{\El}(\mathcal{F} \otimes I).$$
\end{proposition}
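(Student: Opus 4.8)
The plan is to verify the intertwining identity $(\mathcal{F}\otimes I)P_{\Hl}=P_{\El}(\mathcal{F}\otimes I)$ by first showing that $\mathcal{F}\otimes I$ maps $\Hl$ bijectively onto $\El$, and then invoking the general fact that a unitary carrying one closed subspace onto another conjugates the corresponding orthogonal projections. The key input is Lemma \ref{Lem: fd=wf}, which states $\mathcal{F}\Delta_{\ww}=\chi_{\ww}\mathcal{F}$ for every $\ww\in\fw$; tensoring with the identity on $\hil$ gives $(\mathcal{F}\otimes I)(\Delta_{\ww}\otimes I)=(\chi_{\ww}\otimes I)(\mathcal{F}\otimes I)$, where $\chi_{\ww}\otimes I$ denotes multiplication by $\chi_{\ww}$ in the first tensor factor.

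First I would take $\tf\in\Hl$, so $(\Delta_{\ww}\otimes I-I\otimes\Pi_{\ww})\tf=0$ for all $\ww\in\fw$. Apply $\mathcal{F}\otimes I$ and use the intertwining relation to get
\[
(\chi_{\ww}\otimes I)(\mathcal{F}\otimes I)\tf=(\mathcal{F}\otimes I)(\Delta_{\ww}\otimes I)\tf=(\mathcal{F}\otimes I)(I\otimes\Pi_{\ww})\tf=(I\otimes\Pi_{\ww})(\mathcal{F}\otimes I)\tf,
\]
the last equality because $\mathcal{F}\otimes I$ commutes with operators acting only on the second factor. Hence $(\chi_{\ww}\otimes I-I\otimes\Pi_{\ww})(\mathcal{F}\otimes I)\tf=0$ for all $\ww$, i.e. $(\mathcal{F}\otimes I)\tf\in\El$ by Definition \ref{Def: el}. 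This shows $(\mathcal{F}\otimes I)\Hl\subseteq\El$. The reverse inclusion follows identically using $\mathcal{F}^{-1}$ and the relation $\mathcal{F}^{-1}\chi_{\ww}=\Delta_{\ww}\mathcal{F}^{-1}$ (equivalent to Lemma \ref{Lem: fd=wf}), applied to an element of $\El$. Since $\mathcal{F}\otimes I$ is unitary (as $\mathcal{F}$ is, being the Fourier transform between the dual finite abelian groups $\sWl$ and $T_\ell$), it restricts to a unitary isomorphism $\Hl\to\El$.

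Finally, I would conclude by the standard lemma: if $U:\mathcal{K}_1\to\mathcal{K}_2$ is unitary and $U(\mathcal{V}_1)=\mathcal{V}_2$ for closed subspaces $\mathcal{V}_i\subseteq\mathcal{K}_i$, then $UP_{\mathcal{V}_1}=P_{\mathcal{V}_2}U$ — one checks both sides agree on $\mathcal{V}_1$ (both give $U$) and on $\mathcal{V}_1^\perp=U^*(\mathcal{V}_2^\perp)$ (both give $0$, using that $U$ unitary sends orthogonal complements to orthogonal complements). Applying this with $U=\mathcal{F}\otimes I$, $\mathcal{V}_1=\Hl$, $\mathcal{V}_2=\El$ yields the claim. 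I do not expect a genuine obstacle here; the only point requiring a little care is confirming that $\mathcal{F}\otimes I$ genuinely commutes with $I\otimes\Pi_{\ww}$ and with the orthogonal projections onto the second-factor-defined constraint spaces, which is immediate from the tensor structure but worth stating explicitly. The substance of the proposition is entirely carried by Lemma \ref{Lem: fd=wf}, which has already been proved.
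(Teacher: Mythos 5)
Your proposal is correct and follows exactly the route the paper intends: the paper's own proof is the one-line remark that the claim "follows from Lemma \ref{Lem: fd=wf} and Definitions \ref{Def: el} and \ref{Def: hl}," and your write-up simply supplies the details of that argument (intertwining of the defining relations under $\mathcal{F}\otimes I$, hence a unitary bijection $\Hl\to\El$, hence conjugation of the orthogonal projections). No gaps.
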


\begin{proof}
It follows from Lemma \ref{Lem: fd=wf} and Definitions \ref{Def: el} and \ref{Def: hl}.
\end{proof}

Now we give the inner product formula on $\Hl$ using Fourier duality between $\Hl$ and $\El$.

\begin{theorem}\label{Thm:inner product}
For any $\kk,\jj \in \sWl$ and $v_1, v_2 \in \hil$,
we have 
\begin{equation}\label{Equ: inner}
\langle P_{\Hl} (\delta_{\kk}\otimes v_1),   P_{\Hl} (\delta_{\jj} \otimes v_2) \rangle
=\frac{1}{|T_{\ell}|} \sum_{\theta\in W} \varepsilon(\theta)  \langle v_1, \Pi(\wchi_{\jj-\kk+\theta(\rho)}) v_2 \rangle.
\end{equation}
\end{theorem}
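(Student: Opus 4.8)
The plan is to transport the inner product from $\Hl$ to $\El$ via the Fourier isometry $\mathcal{F}\otimes I$ of Proposition~\ref{prop: fp=pf}, compute it there using the explicit ONB $\Bl$ from Proposition~\ref{prop: eigenbasis}, and then recognize the resulting sum over $T_{\ell,0}$ as an inverse Fourier transform that collapses to the representation $\Pi$. First I would write $P_{\Hl}(\delta_{\kk}\otimes v_1)$ and apply $\mathcal{F}\otimes I$: since $(\mathcal{F}\otimes I)P_{\Hl}=P_{\El}(\mathcal{F}\otimes I)$ and $\mathcal{F}\otimes I$ is unitary, we get
$$\langle P_{\Hl}(\delta_{\kk}\otimes v_1), P_{\Hl}(\delta_{\jj}\otimes v_2)\rangle=\langle P_{\El}(\mathcal{F}(\delta_{\kk})\otimes v_1), P_{\El}(\mathcal{F}(\delta_{\jj})\otimes v_2)\rangle.$$
Here $\mathcal{F}(\delta_{\kk})(e^H)=e^{i\langle\kk,H\rangle}$, so $\mathcal{F}(\delta_{\kk})\otimes v_1=\sum_{e^H\in T_\ell}e^{i\langle\kk,H\rangle}\delta_{e^H}\otimes v_1$.

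Next I would expand $P_{\El}$ in the ONB $\Bl=\{\delta_{e^H}\otimes v: e^H\in T_{\ell,0}, v\in B(e^H)\}$. Pairing $\mathcal{F}(\delta_{\kk})\otimes v_1$ against a basis vector $\delta_{e^H}\otimes v$ gives $e^{i\langle\kk,H\rangle}\langle v_1,v\rangle$, and similarly on the other side, so the right-hand side becomes
$$\sum_{e^H\in T_{\ell,0}}\ \sum_{v\in B(e^H)} e^{-i\langle\kk,H\rangle}e^{i\langle\jj,H\rangle}\langle v_1,v\rangle\langle v,v_2\rangle=\sum_{e^H\in T_{\ell,0}} e^{i\langle\jj-\kk,H\rangle}\Big\langle v_1,\ \Big(\sum_{v\in B(e^H)}\langle v,v_2\rangle v\Big)\Big\rangle.$$
Now I would interpret $\sum_{v\in B(e^H)}|v\rangle\langle v|$ as the spectral projection of the commuting family $\{\Pi_{\ww}\}$ onto the joint eigenspace with eigenvalue $(\chi_{\ww}(e^H))_{\ww}$; equivalently, by functional calculus in the abelian $C^*$-algebra generated by the $\Pi_{\ww}$ (which is a quotient of the regular representation, Theorem~\ref{thm:subspectrum}), this projection equals $\frac{|W|}{|T_\ell|}\sum_{\theta\in W}\wchi_{\theta(\rho)}(e^H)^{\!*}\,\Pi(\text{something})$ — more precisely I would use that on $L^2(T_{\ell,0})^W$ the reproducing kernel for the point $e^H$ modulo $W$ is $\frac{1}{|T_\ell|}\sum_{\theta}\varepsilon(\theta)\overline{\wchi_{\theta(\rho)}(e^H)}\,(\,\cdot\,)$, pulled back through $\Pi$.

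The cleanest route, which I would actually carry out, is to first establish the claimed formula in the universal case $\Pi=\Pi_A$ and then push it through an arbitrary US-rep. For $\Pi_A$, $\El$ has the ONB $\{\Lambda_{e^H}: e^H\in\spec\}$ (up to normalization $|W|^{-1/2}$), and using $\Pi_A(\wchi_{\ssb})\Lambda_{e^H}=\chi_{\ssb-\rho}(e^H)\Lambda_{e^H}$ together with the Fourier inversion formula $\mathcal{F}^{-1}$ on $T_\ell$, the sum $\sum_{e^H\in T_{\ell,0}}e^{i\langle\jj-\kk,H\rangle}\langle v_1,\Pi_A(\wchi_{\theta(\rho)})v_2\rangle$-type expression telescopes: indeed $\frac{1}{|T_\ell|}\sum_{e^H\in T_\ell}e^{i\langle\jj-\kk+\theta(\rho),H\rangle}\wchi_{\ssb}(e^H)/\wchi_\rho(e^H)$ is exactly the Fourier coefficient picking out the $\wchi_{\ssb}$-component of $\wchi_{\jj-\kk+\theta(\rho)}$, i.e. $\tilde N^{\ssb}_{\jj-\kk+\theta(\rho),\rho}$-style data, which is the matrix entry of $\Pi(\wchi_{\jj-\kk+\theta(\rho)})$. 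Summing over $\theta\in W$ with signs $\varepsilon(\theta)$ accounts for the anti-symmetrization built into $\delta_{\kk}\mapsto P_{\Hl}\delta_{\kk}$ (the fibers of $P_{\Hl}$ over a $W$-orbit differ by $\varepsilon(\theta)$, as in Proposition~\ref{prop: eigenbasis}). Finally, for a general US-rep $\Pi$, every joint eigenvalue of $\{\Pi_{\ww}\}$ lies in $\spec$ (Theorem~\ref{thm:subspectrum}), so the computation above applies verbatim with the projections $\sum_{v\in B(e^H)}|v\rangle\langle v|$ replacing the rank-one projections of $\Pi_A$, and the same functional-calculus identity $\sum_{v\in B(e^H)}\langle v_1,v\rangle\langle v,v_2\rangle$-weighted sum reassembles into $\langle v_1,\Pi(\wchi_{\jj-\kk+\theta(\rho)})v_2\rangle$.

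\textbf{Main obstacle.} The delicate point is the bookkeeping that turns the spectral sum $\sum_{e^H\in T_{\ell,0}}$ (a sum over a $W$-invariant set, with the $\varepsilon(\theta)$ signs coming from the anti-symmetric structure of $\Hl$ and $\El$) into the clean $W$-sum $\sum_{\theta\in W}\varepsilon(\theta)$ in front of $\langle v_1,\Pi(\wchi_{\jj-\kk+\theta(\rho)})v_2\rangle$, together with getting the normalization constant $|T_\ell|^{-1}$ exactly right. This is essentially the Weyl-character-orthogonality/Verlinde-formula manipulation: one must use $\wchi_\rho(e^H)\overline{\wchi_\rho(e^H)}$ as the Weyl measure density, the identity $\sum_{e^H\in T_\ell}\overline{\wchi_{\ssb}(e^H)}\wchi_{\ssb'}(e^H)=|T_\ell|\,|W|\,\delta$-type relation on $\CL$, and the fact that $\wchi_{\jj-\kk+\theta(\rho)}$ need not have its index in $\CL$ — so one reduces it into $\CL$ via anti-symmetry, which is exactly what introduces the $\theta$-sum. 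I would handle this by first proving the identity as an equality of the $\Hl$-inner product with $\frac{1}{|W|}\langle\wchi_{\jj-\kk+?}\,,\,\cdot\rangle$-style pairing in the $\ell$-character Verlinde algebra and only at the end unwinding the anti-symmetrization; everything else (Fourier transport, expansion in $\Bl$) is routine.
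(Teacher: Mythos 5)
Your first half — transporting the inner product to $\El$ via $(\mathcal{F}\otimes I)P_{\Hl}=P_{\El}(\mathcal{F}\otimes I)$ and expanding in the ONB $\Bl$ to reach
$\frac{1}{|T_{\ell}|}\sum_{e^H\in T_{\ell,0}}\sum_{v\in B(e^H)}e^{i\langle\jj-\kk,H\rangle}\langle v_1,v\rangle\langle v,v_2\rangle$
— is exactly the paper's argument. The gap is that the step you flag as the ``main obstacle'' is precisely the content of the proof, and your sketch of it is not quite the right mechanism. The paper does not pass through $\Pi_A$, does not invoke a reproducing kernel or the density $\wchi_\rho\overline{\wchi_\rho}$, and does not ``reduce the index $\jj-\kk+\theta(\rho)$ into $\CL$'' (that index is left ranging over all of $\sWl$). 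What it does is elementary: split $T_{\ell,0}$ into free $W$-orbits, $\sum_{e^H\in T_{\ell,0}}=\sum_{e^H\in\spec}\sum_{\theta'\in W}$ (using $B(e^{\theta'(H)})=B(e^H)$), and insert
\begin{equation*}
1=\frac{\wchi_\rho(e^{\theta'(H)})}{\wchi_\rho(e^{\theta'(H)})}=\sum_{\theta\in W}\varepsilon(\theta)\,\frac{e^{i\langle\theta(\rho),\theta'(H)\rangle}}{\wchi_\rho(e^{\theta'(H)})}.
\end{equation*}
The numerator combines with $e^{i\langle\jj-\kk,\theta'(H)\rangle}$ to give $e^{i\langle\jj-\kk+\theta(\rho),\theta'(H)\rangle}$, the anti-symmetry $\wchi_\rho(e^{\theta'(H)})=\varepsilon(\theta')\wchi_\rho(e^H)$ converts the sign, and the sum over $\theta'$ reassembles into $\varepsilon(\theta)\,\wchi_{\jj-\kk+\theta(\rho)}(e^H)/\wchi_\rho(e^H)$. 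Since each $v\in B(e^H)$ satisfies $\Pi(\wchi_{\ssb})v=\bigl(\wchi_{\ssb}(e^H)/\wchi_\rho(e^H)\bigr)v$ for all $\ssb\in\sWl$, this ratio is exactly the eigenvalue of $\Pi(\wchi_{\jj-\kk+\theta(\rho)})$ on $v$, and the completeness relation $\sum_{e^H\in\spec}\sum_{v\in B(e^H)}\langle v_1,v\rangle\langle v,v_2\rangle=\langle v_1,v_2\rangle$ collapses the spectral sum. So the $\theta$-sum is produced by expanding the Weyl denominator $\wchi_\rho$ as an alternating exponential sum, not by anything resembling orthogonality of characters or a Verlinde-type identity.

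Two smaller points. First, your proposed detour through the universal case $\Pi=\Pi_A$ is unnecessary: the computation above applies verbatim to any US-rep because Theorem \ref{thm:subspectrum} already places every joint eigenvalue in $\spec$, and nothing in the argument uses multiplicity one. Second, your ``telescoping Fourier coefficient'' description of the $\Pi_A$ case conflates the inverse Fourier transform on $T_\ell$ (which would produce a $\delta$-function in $\sWl$, not a fusion coefficient) with the eigenvalue relation; as written that step would not go through. With the Weyl-denominator insertion supplied, the rest of your outline is correct and coincides with the paper's proof.
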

\begin{proof}
For any $e^H \in \spec$, $B(e^H)$ is an eigenbasis of the eigenspace in $L^2(T_{\ell,0})^W$ with spectrum $e^H$. Then 
\begin{align*}
 P_{\El} (\mathcal{F}\delta_{\kk} \otimes v_1 )
=P_{\El} (\sum_{e^H \in T_{\ell}} e^{i\langle \kk, H \rangle} \delta_{e^H} \otimes v_1 ) 
=\sum_{e^H \in T_{\ell,0}}  \sum_{v \in B(e^H)} e^{i\langle \kk, H \rangle} \langle v, v_1 \rangle \delta_{e^H}\otimes v.
\end{align*}

By Proposition \ref{prop: fp=pf},
\begin{align*}
&\langle P_{\Hl} (\delta_{\kk} \otimes v_1) ,   P_{\Hl} (\delta_{\jj} \otimes v_2 )\rangle\\
=&\langle P_{\El} (\mathcal{F} \delta_{\kk} \otimes v_1) ,  P_{\El} (\mathcal{F} \delta_{\jj} \otimes v_2) \rangle\\
=&\frac{1}{|T_{\ell}|}\sum_{e^H \in T_{\ell,0}}  \sum_{v \in B(e^H)} \overline{e^{i\langle \kk, H \rangle} \langle v, v_1 \rangle} e^{i\langle \jj, H \rangle} \langle v, v_2 \rangle\\
=&\frac{1}{|T_{\ell}|}\sum_{e^H \in T_{\ell,0}}  \sum_{v \in B(e^H)}  e^{i\langle \jj-\kk, H \rangle} \langle v_1, v \rangle \langle v, v_2 \rangle \\
=&\frac{1}{|T_{\ell}|}\sum_{e^H \in \spec} \sum_{v \in B(e^H)}  \sum_{\theta'\in W}  e^{i\langle \jj-\kk, \theta'(H) \rangle}  \langle v_1, v\rangle \langle v, v_2 \rangle  \\
=&\frac{1}{|T_{\ell}|}\sum_{\theta\in W} \sum_{e^H \in \spec} \sum_{v \in B(e^H)}  \sum_{\theta'\in W} \varepsilon(\theta)  \frac{e^{i\langle \jj-\kk+\theta(\rho), \theta'(H) \rangle} }{\wchi_{\rho}(e^{\theta'(H)})} \langle v_1, v\rangle \langle v, v_2 \rangle \\
=&\frac{1}{|T_{\ell}|}\sum_{\theta\in W} \sum_{e^H \in \spec} \sum_{v \in B(e^H)}  \sum_{\theta'\in W} \varepsilon(\theta\theta')  \frac{e^{i\langle \theta'(\jj-\kk+\theta(\rho)), H \rangle} }{\wchi_{\rho}(e^H)} \langle v_1, v\rangle \langle v, v_2 \rangle \\
=&\frac{1}{|T_{\ell}|}\sum_{\theta\in W} \sum_{e^H \in \spec} \sum_{v \in B(e^H)}  \varepsilon(\theta)  \frac{\wchi_{\jj-\kk+\theta(\rho)}(e^H)}{\wchi_{\rho}(e^H)} \langle v_1, v\rangle \langle v, v_2 \rangle \\
=&\frac{1}{|T_{\ell}|}\sum_{\theta\in W} \sum_{e^H \in \spec} \sum_{v \in B(e^H)} \varepsilon(\theta)   \langle v_1, \Pi(\wchi_{\jj-\kk+\theta(\rho)})v\rangle \langle v, v_2 \rangle\\
=&\frac{1}{|T_{\ell}|} \sum_{\theta\in W} \varepsilon(\theta) \langle v_1,  \Pi(\wchi_{\jj-\kk+\theta(\rho)}) v_2 \rangle.
\end{align*}
\end{proof}

\begin{remark}
When $G$ is the quiver of a module of quantum $\ssl_n$ at level $\ell$, namely a higher Dynkin diagram,
$\Pi_G$ is a GUS-rep graded by $\Z_n=Z(\ssl_n)$.
Ocneanu called $\Hl$ the space of biharmonic functions and outlined a proof of the inner product formula \eqref{Equ: inner} in the course \cite{Ocn17}.
Our proof is different from the one outlined by Ocneanu.
\end{remark}

\begin{corollary}\label{Cor:Proj}
For any $\jj \in \sWl$ and $v \in \hil$, we have
$$P_{\Hl} (\delta_{\jj} \otimes v)=\frac{1}{|T_{\ell}|} \sum_{\kk \in \sWl}  \sum_{\theta\in W} \varepsilon(\theta)  \delta_{\kk}  \otimes \Pi(\wchi_{\jj-\kk+\theta(\rho)}) v .$$
\end{corollary}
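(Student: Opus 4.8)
The plan is to compute $P_{\Hl}(\delta_{\jj}\otimes v)$ by expanding it in the orthonormal basis $\{\delta_{\kk}\otimes w : \kk\in\sWl, w\in B\}$ of $L^2(\sWl)\otimes\hil$ and using the inner product formula of Theorem \ref{Thm:inner product}. Since $P_{\Hl}$ is an orthogonal projection, for any $\kk\in\sWl$ and $w\in B$ we have $\langle P_{\Hl}(\delta_{\jj}\otimes v),\,\delta_{\kk}\otimes w\rangle = \langle P_{\Hl}(\delta_{\jj}\otimes v),\, P_{\Hl}(\delta_{\kk}\otimes w)\rangle$, because $\delta_{\jj}\otimes v$ is already split by $P_{\Hl}$ on the left and $P_{\Hl}$ is self-adjoint and idempotent. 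Hence the coefficient of $\delta_{\kk}\otimes w$ in $P_{\Hl}(\delta_{\jj}\otimes v)$ is exactly the quantity evaluated in Theorem \ref{Thm:inner product}, with the roles of $\kk,\jj$ and $v_1,v_2$ appropriately assigned.

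Carrying this out, I would write
\[
P_{\Hl}(\delta_{\jj}\otimes v) = \sum_{\kk\in\sWl}\sum_{w\in B} \langle P_{\Hl}(\delta_{\jj}\otimes v),\, P_{\Hl}(\delta_{\kk}\otimes w)\rangle\, \delta_{\kk}\otimes w,
\]
and then apply Theorem \ref{Thm:inner product} (with $(\kk,\jj,v_1,v_2)$ there taken as $(\jj,\kk,v,w)$ here, so that the character index $\jj-\kk+\theta(\rho)$ becomes $\kk-\jj+\theta(\rho)$ — I need to be careful about this sign, see below) to get
\[
\langle P_{\Hl}(\delta_{\jj}\otimes v),\, P_{\Hl}(\delta_{\kk}\otimes w)\rangle = \frac{1}{|T_{\ell}|}\sum_{\theta\in W}\varepsilon(\theta)\,\langle v,\, \Pi(\wchi_{\kk-\jj+\theta(\rho)})\, w\rangle.
\]
Substituting and then performing the sum over $w\in B$: since $B$ is an orthonormal basis of $\hil$, the operator identity $\sum_{w\in B}\langle v, \Pi(\wchi)w\rangle\, w = \sum_{w\in B}\langle \Pi(\wchi)^*v, w\rangle\, w = \Pi(\wchi)^* v = \Pi(\wchi^*)v$ collapses the inner $w$-sum. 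Using the $*$-property of $\Pi$ (Definition \ref{Def:gus}(2)) and $\wchi_{\kk}^*=\wchi_{\kk^*}$, together with the fact that $(\kk-\jj+\theta(\rho))^* = \jj-\kk+\theta^*(\rho)$ for a suitable reindexing $\theta\mapsto\theta^*$ over $W$ preserving $\varepsilon$ (this is where $\Omega$ and the involution interact with the Weyl-group sum; alternatively one simply absorbs $\Omega$ into the summation variable since $\varepsilon(\Omega)$ and the reindexing cancel), one recovers $\sum_{\theta\in W}\varepsilon(\theta)\,\Pi(\wchi_{\jj-\kk+\theta(\rho)})v$, giving the stated formula.

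The main obstacle I anticipate is bookkeeping the two sign/involution issues cleanly: first, matching the index convention in Theorem \ref{Thm:inner product} (which of $\kk,\jj$ appears with a plus sign) so that the final answer reads $\wchi_{\jj-\kk+\theta(\rho)}$ rather than $\wchi_{\kk-\jj+\theta(\rho)}$; and second, handling the $*$ on $\Pi(\wchi_{\cdot})$ that arises from collapsing the $w$-sum — one must verify that applying $*$ to the character index $\kk-\jj+\theta(\rho)$ and re-summing over $\theta\in W$ (using that $\rho^*=\rho$ up to the action of $\Omega\in W$, and that $\Omega$ permutes $W$ without changing the signed sum) returns precisely the index set $\{\jj-\kk+\theta(\rho):\theta\in W\}$ with the correct signs $\varepsilon(\theta)$. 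Both of these are routine once set up carefully, but they are the only places where an error could creep in; everything else is a direct substitution of Theorem \ref{Thm:inner product} into the basis expansion of an orthogonal projection. An even slicker route, avoiding the $*$ entirely, is to note that $P_{\Hl}(\delta_{\jj}\otimes v) = \sum_{\kk}\delta_{\kk}\otimes c_{\kk}$ for some $c_{\kk}\in\hil$, and to determine $c_{\kk}$ by pairing against $\delta_{\kk}\otimes v_2$ for arbitrary $v_2$, applying Theorem \ref{Thm:inner product} directly with $v_1=v$; this yields $\langle c_{\kk}, v_2\rangle = \frac{1}{|T_{\ell}|}\sum_{\theta\in W}\varepsilon(\theta)\langle v, \Pi(\wchi_{\jj-\kk+\theta(\rho)})v_2\rangle = \langle \frac{1}{|T_{\ell}|}\sum_{\theta\in W}\varepsilon(\theta)\Pi(\wchi_{\jj-\kk+\theta(\rho)})^* v, v_2\rangle$, and since $\wchi_{\jj-\kk+\theta(\rho)}^* = \wchi_{\kk-\jj+(\theta(\rho))^*}$ one again needs the reindexing — so the involution bookkeeping is genuinely unavoidable, but it is the standard $\rho\leftrightarrow-\rho$ symmetry via $\Omega$ and is short to state.
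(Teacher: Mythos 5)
Your argument is correct and is essentially the paper's own proof, which consists of exactly this: checking the inner product of both sides against the basis vectors $\delta_{\kk}\otimes v_1$ via Theorem \ref{Thm:inner product}. The involution/$\Omega$ bookkeeping you flag as the main obstacle is real but entirely avoidable: if you pair $\delta_{\kk}\otimes w$ into the \emph{first} (conjugate-linear) slot, Theorem \ref{Thm:inner product} applies verbatim with first entry $(\kk,w)$ and second entry $(\jj,v)$, giving the coefficient $\tfrac{1}{|T_{\ell}|}\sum_{\theta}\varepsilon(\theta)\langle w,\Pi(\wchi_{\jj-\kk+\theta(\rho)})v\rangle$ directly, and the $w$-sum collapses by $\sum_{w\in B}\langle w,u\rangle w=u$ with no adjoint at all --- your reindexing does go through (via $\wchi_{\Omega(\ssb)}=\varepsilon(\Omega)\wchi_{\ssb}$ and $-\theta(\rho)=\theta\Omega(\rho)$, so the signed sum over $W$ absorbs $\Omega$), but it is needed only because of the slot ordering you chose.
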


\begin{proof}
It follows from checking the inner product with $\delta_{\kk}\otimes v_1$ on both sides using Theorem \ref{Thm:inner product}.
\end{proof}

\begin{remark}
If $B$ is a $\Z$-basis, then the functions $\{|T_{\ell}| P_{\Hl} (\delta_{\jj} \otimes v) \in \Hl: \jj\in \sWl, ~ v\in B\}$ have integer coefficients on the ONB $\{\delta_k : k \in \sWl\} \otimes B$. We consider them as a generalization of additive functions on the Auslander-Reiten quivers given in Equation \eqref{Equ: additive}.
\end{remark}

\begin{corollary}
When $\Pi$ is a GUS-rep,
let $P_{0}$ be the orthogonal projection from $L^2(\sWl) \times \hil$ onto the neutral subspace $\displaystyle \bigoplus_{\kk\in \sWl} \mathbb{C}\delta_{\kk} \otimes \hil_{-\kk}$. Then 
$$P_{\Hl} P_{0}=P_0 P_{\Hl},$$
and it is the projection onto $\Hlz$.
\end{corollary}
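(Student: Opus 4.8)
The plan is to show that the two projections $P_{\Hl}$ and $P_0$ commute, and then identify the range of their product with $\Hlz$. For the commutation, I would use the explicit formula for $P_{\Hl}$ from Corollary \ref{Cor:Proj}. The key structural input is that, for a GUS-rep, $\Pi(\wchi_{\jj-\kk+\theta(\rho)})$ maps the homogeneous component $\hil_{-\jj}$ into $\hil_{-\jj + (\jj - \kk + \theta(\rho)) - \rho} = \hil_{-\kk + \theta(\rho) - \rho}$, by property (3) of Definition \ref{Def:gus}. Since $\theta(\rho) - \rho \in \sR$ for every $\theta \in W$ (the root lattice is $W$-invariant and $\theta(\rho)-\rho$ is a sum of roots), we get $\theta(\rho)-\rho = 0$ in $Z(\g) = \sW/\sR$, so $\Pi(\wchi_{\jj-\kk+\theta(\rho)})$ in fact sends $\hil_{-\jj}$ into $\hil_{-\kk}$. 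This is precisely the grading compatibility needed: applying $P_{\Hl}$ to a neutral vector $\delta_{\jj} \otimes v$ with $v \in \hil_{-\jj}$ produces, by Corollary \ref{Cor:Proj}, a sum of terms $\delta_{\kk} \otimes \Pi(\wchi_{\jj-\kk+\theta(\rho)})v$, each of which lies in $\delta_{\kk} \otimes \hil_{-\kk}$, i.e. in the neutral subspace. Hence $P_{\Hl}$ preserves $\mathrm{ran}(P_0)$, i.e. $P_0 P_{\Hl} P_0 = P_{\Hl} P_0$. Taking adjoints (both $P_{\Hl}$ and $P_0$ are self-adjoint) gives $P_0 P_{\Hl} P_0 = P_0 P_{\Hl}$, and combining the two yields $P_{\Hl} P_0 = P_0 P_{\Hl}$.

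Once commutation is established, the product $P := P_{\Hl} P_0 = P_0 P_{\Hl}$ is itself an orthogonal projection, and its range is $\mathrm{ran}(P_{\Hl}) \cap \mathrm{ran}(P_0) = \Hl \cap \bigl(\bigoplus_{\kk} \mathbb{C}\delta_{\kk} \otimes \hil_{-\kk}\bigr)$. Unwinding Definition \ref{Def: hl}, an element of this intersection is an $\tf \in \bigoplus_{\kk} \mathbb{C}\delta_{\kk} \otimes \hil_{-\kk}$ satisfying $(\Delta_{\ww} \otimes I - I \otimes \Pi_{\ww})\tf = 0$ for all $\ww \in \fw$, which is exactly the defining condition of $\Hlz$. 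So $\mathrm{ran}(P) = \Hlz$, and $P$ is the orthogonal projection onto $\Hlz$, as claimed.

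I expect the only genuinely substantive point to be verifying that $P_{\Hl}$ preserves the neutral subspace, and the subtlety there reduces to the observation that $\theta(\rho) - \rho$ lies in the root lattice, so the weight shift induced by $\wchi_{\jj-\kk+\theta(\rho)}$ agrees with that of $\wchi_{\jj-\kk+\rho}$ modulo $\sR$; everything else is formal manipulation of orthogonal projections with commuting ranges. One should also note in passing that $\Delta_\ww$ preserves the neutral-type decomposition in the appropriate sense — but this is automatic once the grading bookkeeping above is in place, since the formula in Corollary \ref{Cor:Proj} already packages the action of all the $\Delta_\ww$'s through $\Pi$.
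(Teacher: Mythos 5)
Your proof is correct and follows essentially the same route as the paper: both use Corollary \ref{Cor:Proj} to show that $P_{\Hl}$ maps the neutral subspace into itself, deduce $P_{\Hl}P_0 = P_0 P_{\Hl} P_0$, and conclude commutation by self-adjointness of the two projections. You additionally spell out the grading bookkeeping (that $\theta(\rho)-\rho \in \sR$, so $\Pi(\wchi_{\jj-\kk+\theta(\rho)})$ sends $\hil_{-\jj}$ into $\hil_{-\kk}$), which the paper's proof leaves implicit.
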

\begin{proof}
When $\Pi$ is a GUS-rep, 
by Corollary \ref{Cor:Proj}, for any $\jj \in \sWl$ and $v \in \hil_{-\jj}$, we have that
$$P_{\Hl} (\delta_{\jj} \otimes v)=P_0P_{\Hl} (\delta_{\jj} \otimes v).$$
Therefore, $P_{\Hl} P_0=P_0P_{\Hl}  P_0$. Both $P_{\Hl}$ and $P_0$ are projections, so 
$$P_{\Hl} P_{0}=P_0 P_{\Hl}.$$
By definition, it is the projection onto $\Hlz$. 
\end{proof}

Therefore we also call $\Hlz$ the neutral subspace of $\Hl$, as its total grading is 0 in $Z(\g)$. 

\begin{corollary}\label{Cor:length}
For any $\jj \in \sWl$ and $v \in \hil$, we have
$$\| P_{\Hl} (\delta_{\jj}\otimes v) \|_2^2=\frac{|W|}{|T_{\ell}|}\|v\|_2^2.$$
\end{corollary}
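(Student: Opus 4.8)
The plan is to obtain the identity as an immediate specialization of the inner product formula in Theorem \ref{Thm:inner product}. First I would set $\kk = \jj$ and $v_1 = v_2 = v$ in Equation \eqref{Equ: inner}. Then the difference $\jj - \kk$ drops out and one is left with
$$\| P_{\Hl}(\delta_{\jj}\otimes v)\|_2^2 = \frac{1}{|T_{\ell}|}\sum_{\theta\in W}\varepsilon(\theta)\,\langle v, \Pi(\wchi_{\theta(\rho)}) v\rangle .$$
Here $\theta(\rho)$ is understood in $\sWl = \sW/\sRl$, which is legitimate because $W$ preserves $\sW$, and $\wchi_{\theta(\rho)}$ (hence its image under the linear map $\Pi$) is defined for every element of $\sWl$.

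Next I would invoke the anti-symmetry of the $\ell$-characters: by Proposition \ref{prop:asym} one has $\wchi_{\theta(\rho)} = \varepsilon(\theta)\wchi_{\rho}$, so by linearity of $\Pi$ and unitality (property (1) of Definition \ref{Def:gus}), $\Pi(\wchi_{\theta(\rho)}) = \varepsilon(\theta)\Pi(\wchi_{\rho}) = \varepsilon(\theta) I$. Consequently each summand equals $\varepsilon(\theta)\langle v, \varepsilon(\theta) v\rangle = \varepsilon(\theta)^2\|v\|_2^2 = \|v\|_2^2$, independently of $\theta$. Summing the $|W|$ identical terms yields
$$\| P_{\Hl}(\delta_{\jj}\otimes v)\|_2^2 = \frac{|W|}{|T_{\ell}|}\|v\|_2^2 ,$$
which is the claim.

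There is essentially no obstacle here: the corollary is a direct consequence of Theorem \ref{Thm:inner product} together with the two defining features of $R_{\ell}$ and its $*$-representations, namely unitality $\Pi(\wchi_{\rho}) = I$ and the anti-symmetry $\wchi_{\theta(\rho)} = \varepsilon(\theta)\wchi_{\rho}$. The only point that deserves a word of justification is that $\wchi_{\theta(\rho)}$ and $\Pi(\wchi_{\theta(\rho)})$ are well defined for $\theta \in W$, as noted above; everything else is routine.
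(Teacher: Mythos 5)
Your proof is correct and follows exactly the paper's own argument: specialize the inner product formula of Theorem \ref{Thm:inner product} to $\kk=\jj$, $v_1=v_2=v$, and use $\varepsilon(\theta)\Pi(\wchi_{\theta(\rho)})=\Pi(\wchi_{\rho})=I$ so that each of the $|W|$ summands equals $\|v\|_2^2$. The paper states this in one line; you have merely spelled out the same steps.
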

\begin{proof}
It follows from Theorem \ref{Thm:inner product} and that $\varepsilon(\theta) \Pi_{\theta(\rho)}=\Pi_{\rho}$ is the identity.
\end{proof}

\begin{definition}
We define the quantum root sphere as
$$\SPi:=\{\sqrt{|T_{\ell}|} P_{\Hl} (\delta_{\kk}\otimes v)  :  \kk\in \sWl, v \in \hil, \|v\|_2=1 \}.$$ 
For an ONB $B$ of $\hil$, we define the full quantum root system as  
$$\SB:=\{\sqrt{|T_{\ell}|} P_{\Hl} (\delta_{\kk}\otimes v)  :  \kk\in \sWl, v \in B, \|v\|_2=1 \}.$$
When $\Pi$ and $B$ are $\Z(\g)$-graded, we define the quantum root system as
$$\SBz:=\{\sqrt{|T_{\ell}|} P_{\Hl} (\delta_{\kk}\otimes v)  :  \kk\in \sWl, v \in B_{-\kk}, \|v\|_2=1 \}.$$
\end{definition}
By Corollary \ref{Cor:length}, any vector in $\SPi$ has length $\sqrt{|W|}$.
By Theorem \ref{Thm:inner product}, the inner product of vectors in $\SB$ is determined by the adjacent matrices.
Moreover, the (full) quantum root space is spanned by the (full) quantum root system. 

\begin{remark}
When $G$ is an $ADE$ Dynkin diagram, $\SBz$ is a standard realization of the root system by additive functions, see \cite{Gab80,Hap87}
\end{remark}

For any $\jj\in \sWl$, the translation $\vartheta_\jj: \kk\mapsto \kk+\jj$ on $\sWl$ induces a dual action on $L^2(\sWl)$, still denoted by $\vartheta_{\jj}$:
$$\vartheta_{\jj} (\delta_{\kk})=\delta_{\kk-\jj},  \; \forall \; \kk\in \sWl.$$
We define the corresponding translation on $ L^2(\sWl) \otimes \hil$ as
$$\ttj:= \vartheta_{\jj} \otimes I.$$ 
Then $\ttj$ is well-defined on $\Hl$ as well. 
Moreover, $\{\ttj\}_{\jj\in \sWl}$ is a finite abelian group.

\begin{proposition}
The quantum root sphere $\SPi$ and quantum root system $\SB$ are translation invariant by $\ttj$, $\jj \in \sWl$..
 
\end{proposition}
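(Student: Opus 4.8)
The plan is to show that translating the label $\jj \in \sWl$ by any element and applying the projection $P_{\Hl}$ commutes appropriately, so that $\ttj$ maps the defining generators $\sqrt{|T_\ell|}\, P_{\Hl}(\delta_{\kk}\otimes v)$ of $\SPi$ (resp.\ $\SB$) to generators of the same form. First I would observe that the operator $\Delta_{\ww}\otimes I$ defining $\Hl$ commutes with the translation $\ttj=\vartheta_{\jj}\otimes I$: indeed $\vartheta_{\jj}$ is just a shift on $L^2(\sWl)$, and $\Delta_{\ww}$ is a convolution-type operator (it sums $f$ over a translated copy of the weight diagram $\mathcal{W}(\ww)$), hence translation-invariant. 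Therefore $\ttj$ preserves the subspace $\Hl$, and consequently $\ttj$ commutes with the orthogonal projection $P_{\Hl}$, i.e.\ $P_{\Hl}\,\ttj = \ttj\, P_{\Hl}$.

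Given this commutation, for any $\kk\in\sWl$, $v\in\hil$ with $\|v\|_2=1$, and any $\jj\in\sWl$ we have
\begin{align*}
\ttj\bigl(\sqrt{|T_{\ell}|}\, P_{\Hl}(\delta_{\kk}\otimes v)\bigr)
&= \sqrt{|T_{\ell}|}\, P_{\Hl}\bigl(\ttj(\delta_{\kk}\otimes v)\bigr)
= \sqrt{|T_{\ell}|}\, P_{\Hl}(\delta_{\kk-\jj}\otimes v).
\end{align*}
Since $\kk-\jj$ again ranges over $\sWl$ and $v$ is unchanged with $\|v\|_2=1$, the right-hand side is again an element of $\SPi$; and if $v\in B$ then it is again an element of $\SB$. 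This shows $\ttj(\SPi)\subseteq\SPi$ and $\ttj(\SB)\subseteq\SB$; applying the same argument to $\vartheta_{-\jj}=\ttj^{-1}$ gives the reverse inclusions, so both sets are invariant under the finite abelian group $\{\ttj\}_{\jj\in\sWl}$.

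The one genuine point to verify carefully is the translation-invariance of $\Delta_{\ww}$, i.e.\ that $\vartheta_{\jj}\Delta_{\ww}=\Delta_{\ww}\vartheta_{\jj}$ on $L^2(\sWl)$; this is immediate from the defining formula $(\Delta_{\kk}f)(\jj')=\sum_{\ssb\in\mathcal{W}(\kk)} m_{\kk}(\ssb)\, f(\jj'+\ssb)$, since shifting the argument commutes with summing over the fixed weight set $\mathcal{W}(\ww)$. I do not expect a real obstacle here — the main (and only) subtlety is bookkeeping with the induced dual action on delta functions, $\vartheta_{\jj}(\delta_{\kk})=\delta_{\kk-\jj}$, and making sure the projection genuinely commutes with $\ttj$ rather than merely intertwining two different projections; this follows because $\ttj$ is unitary and preserves $\Hl$, so it commutes with the orthogonal projection onto $\Hl$. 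In the graded case one would additionally note, using Corollary~\ref{Cor:Proj}, that $\ttj$ shifts the grading consistently, but for the present statement about $\SPi$ and $\SB$ no grading is needed.
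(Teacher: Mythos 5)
Your proof is correct. The paper in fact leaves the proof environment for this proposition empty, so there is nothing to compare against; your argument is the natural one and fills the gap properly. The two key points — that $\Delta_{\ww}$ commutes with $\vartheta_{\jj}$ because it is a convolution against the fixed weight multiset $\mathcal{W}(\ww)$, and that a unitary preserving the finite-dimensional subspace $\Hl$ therefore commutes with the orthogonal projection $P_{\Hl}$ — are both verified correctly, and the conclusion that $\ttj$ permutes the generators $\sqrt{|T_{\ell}|}\,P_{\Hl}(\delta_{\kk}\otimes v)$ of $\SPi$ and $\SB$ follows immediately.
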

\begin{proof}
\end{proof}

\begin{theorem}\label{Thm: common eigenbasis}
The set $\Bl=\{ \delta_{e^H} \otimes v : e^H \in T_{\ell,0}, v\in B(e^H) \}$  is a common eigenbasis of  $\{\mathcal{F} \vartheta_\jj \mathcal{F}^{-1} \otimes I \}_{\jj\in \sWl}$ and $\{I \otimes \Pi_{\ww}\}_{\ww \in \fw}$ in $\El$.
\end{theorem}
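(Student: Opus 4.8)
The plan is to verify directly that each element $\delta_{e^H}\otimes v$ of $\Bl$ is a simultaneous eigenvector of the two commuting families of operators, and then invoke Proposition~\ref{prop: eigenbasis} to conclude that $\Bl$ is in fact a \emph{basis} of $\El$ consisting of such eigenvectors. So the work splits into two independent computations, one for $I\otimes\Pi_{\ww}$ and one for $\mathcal{F}\vartheta_{\jj}\mathcal{F}^{-1}\otimes I$, after which the conclusion follows immediately.

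First I would handle the family $\{I\otimes\Pi_{\ww}\}_{\ww\in\fw}$. Fix $e^H\in T_{\ell,0}$ and $v\in B(e^H)$. By the definition of $B(e^H)$ and of the spectrum $\mathrm{sp}(v)$, we have $\Pi_{\ww}(v)=\chi_{\ww}(e^H)\,v$ for all $\ww\in\fw$. Hence
$$(I\otimes\Pi_{\ww})(\delta_{e^H}\otimes v)=\delta_{e^H}\otimes\Pi_{\ww}(v)=\chi_{\ww}(e^H)\,(\delta_{e^H}\otimes v),$$
so $\delta_{e^H}\otimes v$ is an eigenvector of $I\otimes\Pi_{\ww}$ with eigenvalue $\chi_{\ww}(e^H)$. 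This is essentially built into the construction of $\Bl$ via Proposition~\ref{prop: eigenbasis}, and costs nothing.

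Second I would compute the action of $\mathcal{F}\vartheta_{\jj}\mathcal{F}^{-1}$ on $\delta_{e^H}$. Since $\mathcal{F}^{-1}(\delta_{e^H})$ is (a multiple of) the function $\kk\mapsto e^{-i\langle\kk,H\rangle}$ on $\sWl$, and $\vartheta_{\jj}$ translates its argument, the cleanest route is to note that $\mathcal{F}$ intertwines translation on $\sWl$ with multiplication by a character on $T_{\ell}$: for the basic Fourier pairing $\mathcal{F}(\delta_{\kk})(e^H)=e^{i\langle\kk,H\rangle}$ one checks $\mathcal{F}(\vartheta_{\jj}\delta_{\kk})=\mathcal{F}(\delta_{\kk-\jj})$, so $\mathcal{F}\vartheta_{\jj}\mathcal{F}^{-1}$ acts on $L^2(T_\ell)$ as multiplication by the function $e^H\mapsto e^{-i\langle\jj,H\rangle}$ (up to a sign convention that should be tracked carefully against Equation~\eqref{Equ:F} and the inverse transform formula). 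In particular the delta functions $\delta_{e^H}$, $e^H\in T_\ell$, are exactly the common eigenvectors of this multiplication operator, with eigenvalue $e^{-i\langle\jj,H\rangle}$ at $e^H$. Therefore
$$\left(\mathcal{F}\vartheta_{\jj}\mathcal{F}^{-1}\otimes I\right)(\delta_{e^H}\otimes v)=e^{-i\langle\jj,H\rangle}\,(\delta_{e^H}\otimes v).$$

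Finally I would assemble the argument: by Proposition~\ref{prop: fp=pf} the Fourier transform $\mathcal{F}\otimes I$ carries $\Hl$ onto $\El$, and the two families $\{\mathcal{F}\vartheta_{\jj}\mathcal{F}^{-1}\otimes I\}_{\jj}$ and $\{I\otimes\Pi_{\ww}\}_{\ww}$ preserve $\El$ and commute with each other (one side acts only on the $L^2(T_\ell)$ factor, the other only on $\hil$). By Proposition~\ref{prop: eigenbasis}, $\Bl$ is an ONB of $\El$, and we have just shown each of its elements is a joint eigenvector of both families. Hence $\Bl$ is a common eigenbasis, as claimed. The only genuine subtlety — and the step I would be most careful about — is pinning down the sign and normalization in the computation of $\mathcal{F}\vartheta_{\jj}\mathcal{F}^{-1}$, making sure the translation convention $\vartheta_{\jj}(\delta_{\kk})=\delta_{\kk-\jj}$ and the Fourier conventions of \S\ref{Sec: Fourier Duality} are used consistently; everything else is formal.
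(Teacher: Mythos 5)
Your proposal is correct and follows essentially the same route as the paper: verify that $\delta_{e^H}\otimes v$ is an eigenvector of $I\otimes\Pi_{\ww}$ with eigenvalue $\chi_{\ww}(e^H)$ (built into the definition of $B(e^H)$), check via the Fourier pairing that $\mathcal{F}\vartheta_{\jj}\mathcal{F}^{-1}$ acts on $\delta_{e^H}$ by the scalar $e^{i\langle-\jj,H\rangle}$, and invoke Proposition~\ref{prop: eigenbasis} for the basis claim. Your sign bookkeeping is consistent with the paper's conventions, so no changes are needed.
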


\begin{proof}
By Proposition \ref{prop: eigenbasis}, $\Bl$ is a basis of $\El$.
By Fourier duality, 
$$\vartheta_\jj \mathcal{F}^{-1}\delta_{e^H}= e^{i \langle -\jj, H \rangle} \mathcal{F}^{-1} \delta_{e^H}, ~ \forall \jj \in \sWl,~ e^H \in T_{\ell,0}.$$ 
So
$$( \mathcal{F} \vartheta_\jj \mathcal{F}^{-1}\otimes I )(\delta_{e^H} \otimes v)= e^{i \langle -\jj, H \rangle} \delta_{e^H} \otimes v .$$
On the other hand, 
$$(I \otimes \Pi_{\ww} )(\delta_{e^H} \otimes v) = \chi_{\ww}(e^H)\delta_{e^H} \otimes v.$$
Therefore $\Bl$ is a common eigenbasis.
\end{proof}

\begin{definition}
By Theorem \ref{Thm: common eigenbasis}, if $\tf$ is a common eigenvector of the translations, then 
$$\ttj \tf = e^{ i \langle -\jj, H \rangle} \tf, ~ \forall ~\jj \in \sWl,$$
for some $e^H \in T_{\ell,0}$.
We call $e^H$ the spectrum of $\tf$, denoted by $sp_{\vartheta}(\tf)=e^H$.
\end{definition}

\begin{definition}
Suppose $\Pi$ is a US-rep and $A$ is a subgroup of $\sWl$. 
We define the multiplicity $m_{A}$ of $e^H$ to be dimension of the common eigenspace in $\Hl$ of the translations $\{\vartheta_\jj \}_{\jj \in A}$ with spectrum $e^H$, namely
$$m_{A}(e^H):= \dim \left\{ \tf \in \Hl : \vartheta_\jj \tf = e^{ i \langle -\jj, H \rangle} \tf, ~ \forall ~\jj \in A \right\}.$$
\end{definition}

For a subgroup $A$ of $\sWl=\sW/\sRl$, we consider it as an intermediate subgroup of $\sRl \subset \sW$. 
By Definition \ref{Def:TA},
\begin{align*}
\mathfrak{t}_{A}&=\{H\in \mathfrak{t} :\langle  \rr, H \rangle\in \mathbb{Z}, \; \forall \; \rr\in A \},\\
T_{A}&=\left\{e^{2\pi H} \in T: H \in \mathfrak{t}_{A} \right\} \subseteq T_{\ell}.
\end{align*}

\begin{theorem}\label{Thm:m=m}
Suppose $\Pi$ is a US-rep and $A$ is a subgroup of $\sWl$. 
For any $e^H\in T_{\ell,0}$,
\begin{equation}\label{Equ:m=m}
m_{A}(e^H)= \sum_{e^{H'} \in T_{A}} m_{\Pi}(e^{H+H'}).
\end{equation}
In particular,
$$m_{\sW}(e^H)=m_{\Pi}(e^H)=m_{\sW}(e^{\theta(H)}), \; \forall \; \theta \in W.$$
\end{theorem}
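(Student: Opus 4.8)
The plan is to pass through the Fourier duality of Proposition~\ref{prop: fp=pf}, which identifies $\Hl$ with $\El$ and carries the translations $\vartheta_\jj$ to the multiplication operators $\mathcal{F}\vartheta_\jj\mathcal{F}^{-1}\otimes I$. By Theorem~\ref{Thm: common eigenbasis}, the set $\Bl=\{\delta_{e^K}\otimes v : e^K\in T_{\ell,0},\ v\in B(e^K)\}$ is a simultaneous eigenbasis: $\delta_{e^K}\otimes v$ has $\vartheta_\jj$-eigenvalue $e^{i\langle -\jj,K\rangle}$ for $\jj\in\sWl$. Restricting the eigenvalue condition to $\jj\in A$, a vector $\delta_{e^K}\otimes v$ lies in the common eigenspace for $\{\vartheta_\jj\}_{\jj\in A}$ with spectrum $e^H$ precisely when $e^{i\langle \jj, K-H\rangle}=1$ for all $\jj\in A$, i.e.\ when $K-H\in\mathfrak{t}_{A}$ modulo the lattice defining $T_\ell$, equivalently $e^K\in e^H T_{A}$ inside $T_\ell$ (here one uses Definition~\ref{Def:TA} and Proposition~\ref{prop: dual lattice} to see that pairing integrally against all of $A$ is exactly membership in $\mathfrak{t}_A$).

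First I would make this indexing precise: as $K$ ranges over a set of representatives so that $e^K$ runs over the $A$-coset $e^H T_A\subseteq T_{\ell,0}$, the contribution to $m_A(e^H)$ from each such $e^K$ is $|B(e^K)|=m_{\Pi}(e^K)$, and since $m_{\Pi}$ is constant on $W$-orbits (Theorem~\ref{thm:subspectrum} and the remark following the definition of $m_\Pi$) we may write $m_{\Pi}(e^K)=m_{\Pi}(e^{H+H'})$ where $e^{H'}=e^{K-H}\in T_A$. Summing over the coset gives
\begin{equation*}
m_A(e^H)=\sum_{e^K\in e^HT_A\cap T_{\ell,0}}|B(e^K)|=\sum_{e^{H'}\in T_A}m_{\Pi}(e^{H+H'}),
\end{equation*}
where the terms with $e^{H+H'}\in M(T_\ell)$ simply vanish because $B(e^{H+H'})=\varnothing$ there, so extending the sum to all of $T_A$ is harmless. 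This is exactly \eqref{Equ:m=m}.

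For the ``in particular'' clause, take $A=\sWl$ itself. Then $T_A=T_{\sWl}$, which by the duality (the corollary after Theorem~\ref{Thm: group duality}, with $\sWl$ dual to $T_\ell$) is the trivial subgroup $\{1\}$ of $T_\ell$; hence the sum in \eqref{Equ:m=m} collapses to the single term $m_{\Pi}(e^H)$, giving $m_{\sW}(e^H)=m_{\Pi}(e^H)$. The $W$-invariance $m_{\Pi}(e^H)=m_{\Pi}(e^{\theta(H)})$ is the already-noted fact that the multiplicity of a point of $\spec$ only depends on its $W$-orbit, since $\chi_\ww$ is $W$-symmetric.

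\textbf{Main obstacle.} The routine part is the eigenvalue bookkeeping; the one point that needs care is the identification of the condition ``$e^{i\langle\jj,K-H\rangle}=1$ for all $\jj\in A$'' with ``$e^{K-H}\in T_A$'' as subsets of $T_\ell$ — i.e.\ checking that the annihilator of $A$ inside the dual of $T_\ell$ is exactly $T_A$, with no overcounting coming from the quotient $\sW\to\sWl$ or from the lattice $\sRl$ hidden in the definition of $T_\ell$. I expect this to follow cleanly from Proposition~\ref{prop: dual lattice} applied to the intermediate lattice $\sRl\subseteq A\subseteq\sW$ together with Theorem~\ref{Thm: group duality}, but it is the step where an off-by-a-lattice error would creep in, so I would spell out the pairing $\langle\,,\rangle$ between $\sWl$ and $T_\ell$ explicitly before invoking it.
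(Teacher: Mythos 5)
Your proof is correct and follows essentially the same route as the paper: both use the simultaneous eigenbasis $\Bl$ from Theorem~\ref{Thm: common eigenbasis}, identify the eigenvalue-matching condition over $\jj\in A$ with membership of $e^{K-H}$ in $T_A$ via the lattice duality of Theorem~\ref{Thm: group duality}, and then count $|B(e^K)|=m_\Pi(e^K)$ over the coset $e^H T_A$. Your extra remark that mirror points contribute zero (since $B$ is supported on $T_{\ell,0}$) is a sensible clarification the paper leaves implicit.
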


\begin{proof}
Note that for any $e^{H_1} \in T_{\ell,0}$ and $v \in B(e^{H_1})$,
the following are equivalent:
\begin{itemize}
\item[(1)]
$\ttj (\delta_{e^{H_1}} \otimes v) = e^{ i \langle -\jj, H \rangle}\delta_{e^{H_1} }\otimes v, \; \forall \; \jj \in A;$
\item[(2)]
$\ttj (e^{H_1})=\ttj (e^{H}), \; \forall \; \jj \in A;$
\item[(3)]
$\ttj (e^{H_1-H})=1, \; \forall \; \jj \in A;$
\item[(4)]
$H_1-H \in T_A,$
\end{itemize}
where (3) $\iff$ (4) follows from Theorem \ref{Thm: group duality}.
By Theorem \ref{Thm: common eigenbasis}, we obtain Equation \eqref{Equ:m=m}.
Furthermore, if $A=\sW$, then $T_{\sW}$ is the trivial group. So
$$m_{\sW}(e^H)=m_{\Pi}(e^H)=m_{\Pi}(e^{\theta(H)})=m_{\sW}(e^{\theta(H)}), \; \forall \; \theta \in W.$$ 

\end{proof}

If $\Pi$ is a GUS-rep, then $\Hlz$ is the neutral subspace of $\Hl$, and $\Hlz$ is translation invariant under the action of $\ttj$, $\jj \in \sR$. 

\begin{definition}
Suppose $\Pi$ is a GUS-rep, and $A$ is a subgroup of $\sR/\sRl$. 
We define
$$m_{A,0}(e^H)= \dim \{ \tf \in \Hlz : \vartheta_\jj \tf = e^{ i \langle -\jj, H \rangle} \tf, ~ \forall ~\jj \in A \}.$$
\end{definition}

\begin{corollary}
Moreover, $m_{A}(e^H)=m_{A}(e^{H+H'})$ for any $e^{H'} \in T_{\sRa}$. 
\end{corollary}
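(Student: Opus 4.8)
The plan is to reduce the claimed invariance $m_A(e^H)=m_A(e^{H+H'})$ for $e^{H'}\in T_{\sRa}$ to the analogous statement for $m_\Pi$, which is already available via Theorem~\ref{Thm:m=m}. The starting point is Equation~\eqref{Equ:m=m}: for a subgroup $A\subseteq\sWl$ and any $e^H\in T_{\ell,0}$,
$$m_{A}(e^H)=\sum_{e^{H''}\in T_{A}} m_{\Pi}(e^{H+H''}),$$
so it suffices to show that translating $H$ by $H'\in\mathfrak{t}_{\sRa}$ merely permutes the summands; i.e. the multiset $\{\,m_\Pi(e^{H+H'+H''}) : e^{H''}\in T_A\,\}$ equals $\{\,m_\Pi(e^{H+H''}) : e^{H''}\in T_A\,\}$.

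First I would observe that $\sRa=\sRl$ after rescaling; more precisely, since $\sRl=n_c\sRa$, an element $e^{H'}\in T_{\sRa}$ satisfies $\langle\rr,H'\rangle\in\Z$ for all $\rr\in\sRa$, hence $e^{n_c H'}=1$ in $T$ and in fact $e^{H'}\in T_\ell$ (because $\langle n_c\rr,H'\rangle\in n_c\Z\subseteq\Z$ gives $H'\in\mathfrak{t}_\ell$; one checks $H'$ indeed lands in $\mathfrak{t}_\ell$ from $\mathfrak{t}_{\sRa}$). So $e^{H'}$ is an element of $T_\ell$, and multiplication by $e^{H'}$ is a bijection of $T_\ell$. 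Next I would invoke Theorem~\ref{Thm:mp=mp}: for a GUS-rep, $m_\Pi(e^{H})=m_\Pi(e^{H-H'})$ whenever $e^{H'}\in T_{\sR}$. Since $\sRa\subseteq\sR$ (the simple roots $\rr_j$, hence their $W$-images $\theta(\rr_+)$, lie in $\sR$), we have $T_{\sR}\subseteq T_{\sRa}$; wait—the inclusion of root lattices reverses on the dual side, so $T_{\sRa}\supseteq T_{\sR}$ need not hold. The correct containment is $\sRa\subseteq\sR$ gives $\mathfrak{t}_{\sR}\subseteq\mathfrak{t}_{\sRa}$, hence $T_{\sR}\subseteq T_{\sRa}$. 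That is the wrong direction for a direct appeal. So instead I would argue termwise: for each fixed $e^{H''}\in T_A$, I want $m_\Pi(e^{H+H'+H''})=m_\Pi(e^{H+H''_{\sigma}})$ for some permutation $\sigma$ of $T_A$.

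The cleaner route: show $e^{H'}\in T_{\sR}$ after all, i.e. $\mathfrak{t}_{\sRa}=\mathfrak{t}_{\sR}$ as subgroups of $\mathfrak t$, equivalently $\sRa$ and $\sR$ span the same $\Q$-subspace and have the same integral dual. In the simply-laced case $\sRa=\sR$ outright since $\rr_+$ and its Weyl orbit generate the root lattice; in general $\sRa$ is the sublattice generated by the $W$-orbit of the highest root, and its dual lattice $\mathfrak{t}_{\sRa}$ equals $\mathfrak{t}_{\sR}$ because $\sR/\sRa$ is finite of order coprime to... — here I would simply cite that $\sR/\sRa$ is finite (which the paper uses in computing $|T_\ell|$) together with $\sRa\subseteq\sR$ to get $\mathfrak{t}_{\sR}\subseteq\mathfrak{t}_{\sRa}$, and then handle the quotient. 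Concretely: write $m_A(e^{H+H'})=\sum_{e^{H''}\in T_A}m_\Pi(e^{H+H'+H''})$. By Theorem~\ref{Thm:mp=mp}, $m_\Pi$ is constant on $T_{\sR}$-cosets. Since $T_A=T_{\sRl}$-type group contains $T_{\sRa}$ (as $\sRl=n_c\sRa\subseteq\sRa$... again a rescaling), and $e^{H'}\in T_{\sRa}$, translation by $e^{H'}$ preserves each coset structure relevant to the sum, permuting $T_A$. Hence the sum is unchanged.

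\textbf{Main obstacle.} The delicate point is getting the lattice inclusions and their duals straight — specifically confirming that $e^{H'}\in T_{\sRa}$ forces $m_\Pi(e^{\,\cdot\,})$ to be invariant under the relevant translation, which hinges on the precise relationship among $\sRl=n_c\sRa$, $\sRa$, and $\sR$, and on whether Theorem~\ref{Thm:mp=mp}'s $T_{\sR}$-invariance transfers. I expect the proof to conclude in two or three lines once the identification $T_{\sRa}\subseteq$ (the invariance group of $m_\Pi$) is nailed down; the rest is a reindexing of the finite sum in Theorem~\ref{Thm:m=m}. So the real content is an $n_z$/$n_c$ lattice bookkeeping lemma feeding directly into Theorem~\ref{Thm:mp=mp} and Theorem~\ref{Thm:m=m}.
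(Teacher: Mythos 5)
Your proposal assembles the right ingredients --- Theorems \ref{Thm:m=m} and \ref{Thm:mp=mp}, which is exactly what the paper's one-line proof cites --- but it does not close the argument, and the step you yourself flag as ``the main obstacle'' is a genuine gap rather than bookkeeping. You correctly note that $\sRa\subseteq\sR$ gives $T_{\sR}\subseteq T_{\sRa}$, so the $T_{\sR}$-invariance of $m_{\Pi}$ from Theorem \ref{Thm:mp=mp} points the wrong way. Your attempted repair, that $\mathfrak{t}_{\sRa}=\mathfrak{t}_{\sR}$, is false outside the simply-laced case: for non-simply-laced $\g$ the Weyl orbit of the highest (long) root generates a proper finite-index sublattice $\sRa\subsetneq\sR$, so $T_{\sR}\subsetneq T_{\sRa}$ strictly. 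And your closing assertion that ``translation by $e^{H'}$ preserves each coset structure relevant to the sum, permuting $T_A$'' is precisely what needs proof: permuting the index set of $\sum_{e^{H''}\in T_A}m_{\Pi}(e^{H+H''})$ requires $e^{H'}\in T_A$, i.e.\ $T_{\sRa}\subseteq T_A$, i.e.\ $A\subseteq\sRa$, which is not among the stated hypotheses (the surrounding text only assumes $A$ is a subgroup of $\sR/\sRl$).

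The observation you are missing is more elementary than the route you took. If $e^{H'}\in T_A$, then $e^{i\langle\jj,H'\rangle}=1$ for every $\jj\in A$, so the eigenvalue equations $\ttj\tf=e^{i\langle-\jj,H\rangle}\tf$ defining $m_A(e^H)$ are literally identical for $H$ and $H+H'$; hence $m_A$ is $T_A$-invariant with no appeal to Theorem \ref{Thm:m=m} at all. Combining this with the $T_{\sR}$-invariance that does follow from Theorems \ref{Thm:m=m} and \ref{Thm:mp=mp} (each summand $m_{\Pi}(e^{H+H''})$ is unchanged by a $T_{\sR}$-shift) gives invariance of $m_A$ under the subgroup generated by $T_A$ and $T_{\sR}$. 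In the case the paper actually uses downstream, $A=\sRa$, this subgroup contains $T_{\sRa}$ and the corollary follows at once. For an arbitrary subgroup $A$ of $\sR/\sRl$ with $A\not\subseteq\sRa$, neither your argument nor the cited theorems deliver the statement: one would need $m_{\Pi}$ itself to be $T_{\sRa}$-invariant, and the proof of Theorem \ref{Thm:mp=mp} does not give this, since the grading action $e^{H'}\circ v_{\kk}$ it relies on is only defined for $e^{H'}\in T_{\sR}$. So either restrict to $A\subseteq\sRa$ or supply that missing invariance; as written, the proposal leaves the crucial step unproved.
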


\begin{proof}
It follows from Theorems \ref{Thm:m=m} and \ref{Thm:mp=mp}.
\end{proof}

\begin{theorem}\label{Thm:r}
Suppose $\Pi$ is a GUS-rep, and $A$ is a subgroup of $\sR/\sRl$. 
For any $e^H \in T_{\ell,0}$, and $e^{H'} \in T_{\sR}$, 
$$m_{A}(e^H)= m_{A}(e^{H+H'})= n_z m_{A,0}(e^H).$$
In particular,
$$m_{\sR,0}(e^H)= m_{\Pi}(e^{H}).$$
\end{theorem}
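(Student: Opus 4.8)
The plan is to realize $\Hl$ as an orthogonal direct sum of $n_z=|Z(\g)|$ subspaces, each a translate of $\Hlz$, on which the translations $\{\vartheta_\jj\}_{\jj\in A}$ act identically; then any joint eigenspace of $\{\vartheta_\jj\}_{\jj\in A}$ in $\Hl$ splits into $n_z$ copies of the corresponding eigenspace in $\Hlz$, which is the second equality, while the first equality and the final identity follow from the results already in hand.

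First I would fix a $Z(\g)=\sW/\sR$ grading on $L^2(\sWl)\otimes\hil$: writing $\bar\kk$ for the image in $Z(\g)$ of $\kk\in\sWl=\sW/\sRl$, declare $\delta_\kk\otimes\hil_\mu$ to have degree $\bar\kk+\mu$, so that the degree-$g$ piece is $V^{(g)}:=\bigoplus_{\kk\in\sWl}\mathbb{C}\delta_\kk\otimes\hil_{g-\bar\kk}$ and $V^{(0)}=\bigoplus_{\kk}\mathbb{C}\delta_\kk\otimes\hil_{-\kk}$ is the ambient space of $\Hlz$. Since every weight of $V_\ww$ is congruent to $\ww$ modulo $\sR$, $\Delta_\ww$ shifts the $L^2(\sWl)$-grading by $\bar\ww$; and $\Pi(\wchi_{\ww+\rho})$ shifts the $\hil$-grading by $\bar\ww$ as well. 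Hence both $\Delta_\ww\otimes I$ and $I\otimes\Pi_\ww$ map $V^{(g)}$ into $V^{(g+\bar\ww)}$, so $\Delta_\ww\otimes I-I\otimes\Pi_\ww$ is homogeneous, giving $\Hl=\bigoplus_{g\in Z(\g)}\Hl^{(g)}$ with $\Hl^{(g)}:=\Hl\cap V^{(g)}$ and $\Hl^{(0)}=\Hlz$; this is exactly the bookkeeping behind $P_{\Hl}P_0=P_0P_{\Hl}$, carried out for every degree rather than only degree $0$. Furthermore, for $\jj\in A\subseteq\sR/\sRl$ we have $\bar\jj=0$, so $\vartheta_\jj$ preserves every $V^{(g)}$ and hence every $\Hl^{(g)}$.

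Next I would identify the pieces by translation. For each $g\in Z(\g)$ pick a lift $\kk_g\in\sWl$ (possible since $\sRl\subseteq\sR$); then $\vartheta_{\kk_g}$ is a unitary of $\Hl$, and $\vartheta_{\kk_g}(\delta_\kk\otimes v)=\delta_{\kk-\kk_g}\otimes v$ shows it carries $V^{(0)}$ onto $V^{(-g)}$, hence $\vartheta_{\kk_g}(\Hlz)=\Hl^{(-g)}$. All translations commute, so $\vartheta_{\kk_g}\vartheta_\jj=\vartheta_\jj\vartheta_{\kk_g}$ for $\jj\in A$, and therefore $\vartheta_{\kk_g}$ carries the joint eigenspace of $\{\vartheta_\jj\}_{\jj\in A}$ with spectrum $e^H$ in $\Hlz$ isomorphically onto the joint eigenspace with the same spectrum $e^H$ in $\Hl^{(-g)}$. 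Since $\{\vartheta_\jj\}_{\jj\in A}$ respects $\Hl=\bigoplus_g\Hl^{(g)}$, the joint eigenspace in $\Hl$ is the direct sum over the $n_z$ elements $g\in Z(\g)$ of the joint eigenspaces in the $\Hl^{(g)}$, each of dimension $m_{A,0}(e^H)$. This gives $m_A(e^H)=n_z\,m_{A,0}(e^H)$.

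The rest is immediate. The equality $m_A(e^H)=m_A(e^{H+H'})$ for $e^{H'}\in T_{\sR}$ is the preceding corollary (as $\sRa\subseteq\sR$ forces $T_{\sR}\subseteq T_{\sRa}$), which together with $m_A(e^H)=n_z\,m_{A,0}(e^H)$ established above gives the full chain $m_A(e^H)=m_A(e^{H+H'})=n_z\,m_{A,0}(e^H)$. For the final claim, take $A=\sR/\sRl$: Theorem \ref{Thm:m=m} gives $m_{\sR}(e^H)=\sum_{e^{H'}\in T_{\sR}}m_{\Pi}(e^{H+H'})$, every summand equals $m_{\Pi}(e^H)$ by Theorem \ref{Thm:mp=mp}, and $|T_{\sR}|=|\sW/\sR|=n_z$ by Theorem \ref{Thm: group duality}; thus $m_{\sR}(e^H)=n_z\,m_{\Pi}(e^H)$, which combined with $m_{\sR}(e^H)=n_z\,m_{\sR,0}(e^H)$ yields $m_{\sR,0}(e^H)=m_{\Pi}(e^H)$. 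The one delicate point is the very first step: choosing the $Z(\g)$-grading so that $\Delta_\ww\otimes I$ and $I\otimes\Pi_\ww$ are homogeneous of the same degree and $\Hlz$ is precisely its neutral component; once that is in place the rest is straightforward group theory about commuting translations.
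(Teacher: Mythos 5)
Your proof is correct and follows essentially the same route as the paper: the paper simply asserts the decomposition $\Hl=\bigsqcup_{\kk\in Z(\g)}\vartheta_{\kk}\Hlz$ to get $m_A=n_z m_{A,0}$, invokes the preceding results for $m_A(e^H)=m_A(e^{H+H'})$, and computes $m_{\sR}(e^H)=|T_{\sR}|\,m_{\Pi}(e^H)=n_z m_{\Pi}(e^H)$ exactly as you do. The only difference is that you supply the grading bookkeeping behind that decomposition (which the paper leaves implicit, essentially in its corollary $P_{\Hl}P_0=P_0P_{\Hl}$); note only that your sign convention for how $\Delta_{\ww}$ shifts the grading matches Lemma \ref{Lem: fd=wf} rather than the literal formula defining $\Delta_{\kk}$, an inconsistency already present in the paper itself.
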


\begin{proof}
By Theorem \ref{Thm:mp=mp} and Theorem \ref{Thm:m=m}, we have that
\begin{align*}
m_{\Pi}(e^{H})&=m_{\Pi}(e^{H+H'});\\
m_{A}(e^H)&= m_{A}(e^{H+H'}).
\end{align*}
Note that $\displaystyle \Hl=\bigsqcup_{\kk \in Z(\g)} \vartheta_{\kk} \Hlz$,  so 
$$m_{A}(e^{H})= n_z m_{A,0}(e^{H}).$$ 
When $A=\sR$, by Theorem \ref{Thm:m=m},
$$m_{\sR}(e^H)= \sum_{e^{H'} \in T_{\sR}} m_{\Pi}(e^{H+H'})=|T_{\sR}| m_{\Pi}(e^{H})=n_z m_{\Pi}(e^{H}).$$
Therefore,
$$m_{\sR,0}(e^H)= m_{\Pi}(e^{H}).$$
\end{proof}

Recall that $\rooth$ is the set of highest positive roots of $\g$.
For any $\hr \in \rooth$, its order in the group $\sWl$ is the quantum Coxeter number $n_c$.
So the translation $\ttr$ on $\Hl$ (or $\Hlz$) has periodicity $n_c$, 

\begin{definition}
For any $\hr \in \rooth$, we call the translation $\ttr$ on $\Hl$ a quantum Coxeter element.  
\end{definition}

\begin{definition}
We define the exponent map $\Phi: T_{\ell,0} \times \rooth \to \Z_{n_c}$ as 
$$\Phi(e^H,\hr)=\frac{\langle \hr, H \rangle}{2\pi}.$$
Equivalently,
$$e^{\frac{2\pi i}{n_c} \Phi(e^H,\hr)}= e^{i \langle \hr, H \rangle }.$$
We call $\Phi(e^H, \hr)$ the quantum Coxeter exponent of the quantum Coxeter element $\hr$ at spectrum $e^H \in T_{\ell,0}$.
We call the map $\Phi(e^H,\cdot): \rooth \to \Z_{n_c}$ the quantum Coxeter exponent at the spectrum $e^H$.

\end{definition}
Since $\rooth \subset \sR$, $\SBz$ is invariant under the action of quantum Coxeter elements $\{\vartheta_{\theta(\rr+)}  \}_{\theta \in W}$. 
For the $\ssl_2$ case, $\vartheta_{-\rr_+}$ is also a Coxeter element on the root system. This induces a $\Z_2$ symmetry on the eigenvalue of the Coxeter element. In general, the eigenvalue of the quantum Coxeter elements has a Weyl group $W$ symmetry.

\begin{definition}
Suppose $\Pi$ is a US-rep.
For any $e^H\in T_{\ell,0}$, we define the multiplicity of the quantum Coxeter exponent $\Phi(e^H,\cdot)$ for the action of quantum Coxeter elements on $\Hl$ as 
$$m_{\Phi}(e^H):=\dim \left\{ \tf \in \Hl : \ttr \tf=e^{i  \langle \hr, H \rangle} \tf, \; \forall \; \hr \in \rooth \right\}.$$
When $\Pi$ is a GUS-rep, we define the multiplicity for the action on $\Hlz$ as
$$m_{\Phi,0}(e^H):=\dim \left\{ \tf \in \Hlz : \ttr \tf=e^{i  \langle \hr, H \rangle} \tf, \; \forall \; \hr \in \rooth \right\}.$$
\end{definition}

\begin{theorem}\label{thm: exponents}
Suppose $\Pi$ is a US-rep over quantum $\g$ at level $\ell$. For any $e^H \in T_{\ell,0}$,
$$m_{\Phi}(e^H)=m_{\sRa}(e^H)=\sum_{e^{H'} \in T_{\sRa}} m_{\Pi}(e^{H+H'}).$$
When $\Pi$ is a GUS-rep, 
$$m_{\Phi,0}(e^H)=m_{\sRa,0}(e^H)=\sum_{e^{H'} \in T_{\sRa}/T_{\sR}} m_{\Pi}(e^{H+H'}).$$
In particular, if $\g$ is an ADE Lie algebra, then $T_{\sR}=T_{\sRa}$. We have that 
$$m_{\Phi}(e^H)=m_{\Pi}(e^{H}).$$
\end{theorem}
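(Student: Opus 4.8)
The plan is to deduce Theorem \ref{thm: exponents} from the earlier structural results, namely Theorem \ref{Thm:m=m} (and its graded refinement Theorem \ref{Thm:r}), by recognizing that the action of the quantum Coxeter elements is exactly the action of the translation subgroup generated by $\rooth$. The first step is to identify the subgroup of $\sWl$ involved: the quantum Coxeter elements are the translations $\ttr$ for $\hr \in \rooth$, so the common eigenspace defining $m_{\Phi}(e^H)$ is precisely the common eigenspace of all $\vartheta_{\jj}$ for $\jj$ ranging over the subgroup of $\sWl$ generated by $\rooth=\{\theta(\rr_+) : \theta \in W\}$. By the very definitions $\sRa = \mathrm{span}_{\Z}\{\theta(\rr_+) : \theta \in W\}$ and $\sRl = n_c\sRa$, this generated subgroup is exactly the image of $\sRa$ in $\sWl = \sW/\sRl$, i.e. $A = \sRa/\sRl$. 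Hence $m_{\Phi}(e^H) = m_{A}(e^H)$ with this $A$, and likewise $m_{\Phi,0}(e^H)=m_{A,0}(e^H)$ in the graded case when $A\subseteq \sR/\sRl$ — which holds because $\rr_+\in\sR$.

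The second step is to feed $A=\sRa/\sRl$ into Theorem \ref{Thm:m=m}, which gives immediately
$$m_{\sRa}(e^H) = \sum_{e^{H'} \in T_{\sRa}} m_{\Pi}(e^{H+H'}),$$
where $T_{\sRa} = \{e^{2\pi H'} : \langle \rr, H'\rangle \in \Z, \forall \rr \in \sRa\}$ by Definition \ref{Def:TA}. Combining with the first step yields the first displayed equality. For the graded statement, one applies Theorem \ref{Thm:r} with $A=\sRa/\sRl$ (a subgroup of $\sR/\sRl$) to get $m_{\sRa}(e^H) = n_z m_{\sRa,0}(e^H)$, and one similarly has $m_{\Pi}$ summed over $T_{\sR}$-cosets picking up the factor $n_z = |T_{\sR}|$; dividing the first equality by $n_z$ and rewriting the index set as $T_{\sRa}/T_{\sR}$ gives the second display. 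The last assertion is the observation that for an $ADE$ (simply-laced) Lie algebra all roots have the same length and $\rr_+$ generates the same lattice as the simple roots, so $\sRa = \sR$, whence $T_{\sRa}=T_{\sR}$, the sum over $T_{\sRa}/T_{\sR}$ collapses to a single term, and $m_{\Phi,0}(e^H)=m_{\sR,0}(e^H)=m_{\Pi}(e^H)$ by the "in particular" clause of Theorem \ref{Thm:r}; the ungraded version $m_{\Phi}(e^H)=m_{\Pi}(e^H)$ follows the same way from Theorem \ref{Thm:m=m} once $T_{\sRa}$ is the trivial group, which it is since $T_{\sRa}=T_{\sR}$ and $T_{\sR}$ is trivial as $\sR$ is a full-rank sublattice — wait, more carefully, $T_{\sR}$ has order $n_z$, so for the ungraded claim one should note $T_{\sRa}=T_{\sR}$ has order $n_z=1$ precisely when $Z(\g)$ is trivial; in general the clean ungraded identity in the $ADE$ case should be read off from $m_{\Phi}(e^H)=n_z m_{\Phi,0}(e^H)=n_z m_{\Pi}(e^H)$, so I would state it as $m_{\Phi}(e^H)=m_{\Pi}(e^H)$ only in the graded normalization, matching the theorem's phrasing.

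The main obstacle I anticipate is the bookkeeping in the second step: correctly matching the subgroup $\sRa/\sRl \subseteq \sWl$ with the lattice-theoretic dual $T_{\sRa}$, and tracking the orders $n_z$, $|T_{\sR}|$, $|T_{\sRa}/T_{\sR}|$ so that the graded and ungraded statements are consistent. Concretely, one must verify that the subgroup of $\sWl$ generated by $\{\theta(\rr_+)\}_{\theta\in W}$ is literally $\sRa/\sRl$ and not a proper subgroup — this uses $\sRl = n_c\sRa$ so that the image of $\sRa$ has order $n_c^{\rk}$ (consistent with the periodicity $n_c$ of $\ttr$ noted just before the definition of quantum Coxeter elements) — and that Definition \ref{Def:TA}'s hypothesis "$\sW/A$ finite" applies with $A=\sRa$ (it does, since $\sRl\subseteq\sRa\subseteq\sW$ with all quotients finite). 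Everything else is a direct citation of Theorems \ref{Thm:m=m} and \ref{Thm:r} together with the elementary lattice fact $\sRa=\sR$ for simply-laced $\g$.
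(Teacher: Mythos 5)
Your proposal is correct and follows essentially the same route as the paper: identify the common eigenspace of the quantum Coxeter elements with that of the translation subgroup $\sRa/\sRl$ of $\sWl$, then cite Theorem \ref{Thm:m=m} for the ungraded sum, Theorem \ref{Thm:r} for the factor $n_z$, and Theorem \ref{Thm:mp=mp} to collapse the sum over $T_{\sRa}$ to one over $T_{\sRa}/T_{\sR}$. Your closing caveat about the final $ADE$ display is well taken --- the paper's own Summary states that conclusion in the graded normalization as $m_{\Phi,0}(e^H)=m_{\Pi}(e^H)$, exactly as you suggest it should be read.
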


\begin{proof}
Since $\sRa$ is generated by $\{\vartheta_{\theta(\rr+)}  \}_{\theta \in W}$,
for any $e^H \in T_{\ell}$, $m_{\Phi}(e^H)=m_{\sRa,0}(e^H)$.
By Theorem \ref{Thm:r}, $\displaystyle m_{\sRa,0}(e^H)=\frac{1}{n_z}m_{\sRa}(e^H)$.
By Theorems \ref{Thm:m=m}, $\displaystyle m_{\sRa}(e^H)=\sum_{e^{H'} \in T_{\sRa}} m_{\Pi}(e^{H+H'})$. By Theorem \ref{Thm:mp=mp}, $\displaystyle \frac{1}{n_z} \sum_{e^{H'} \in T_{\sRa}} m_{\Pi}(e^{H+H'})=\sum_{e^{H'} \in T_{\sRa}/T_{\sR}} m_{\Pi}(e^{H+H'})$.
\end{proof}

When $\g=\ssl_2$ and $G$ is an $ADE$ Dynkin diagram, we have the following correspondence:
\begin{enumerate}
\item $n_c$ is the Coxeter number;
\item $\Pi$ is a GUS-rep; 
\item  $\Pi_{\ww}$ is the adjacent matrix of $G$, where $\ww$ is the fundamental weight; 
\item $T_{\ell,0}=\left\{ e^H=\left[\begin{array}{cc} e^{\frac{k \pi i}{2n_c}} &0 \\ 0 & e^{\frac{-k \pi i}{2n_c}} \end{array}\right] : k=1, 2, \cdots, n_c-1, n_c+1, n_c+2, \cdots,2n_c-1 \right\};$ 
\item $\spec=\left\{ \left(\left[\begin{array}{cc} e^{\frac{k \pi i}{2n_c}} &0 \\ 0 & e^{\frac{-k \pi i}{2n_c}} \end{array}\right], \left[\begin{array}{cc} e^{\frac{-k \pi i}{2n_c}} &0 \\ 0 & e^{\frac{k \pi i}{2n_c}} \end{array}\right]\right) : k=1, 2, \cdots, n_c-1\right\};$ 
\item The eigenvalue of the adjacent matrix $\Pi_{\ww}$ at $e^H$ is $\chi_{\ww}(e^H)=e^{\frac{k \pi i}{2n_c}} + e^{\frac{-k \pi i}{2n_c}}$ with multiplicity $m_{\Pi}(e^H)$;
\item $\SBz$ is the root system; 
\item $\Hlz$ is the root space; 
\item $\{\ttr : \hr \in \rooth \}=\{\pm \rr_+\}$ is an opposite pair of Coxeter elements; 
\item The eigenvalue of the Coxeter element $\tilde{\vartheta}_{\pm\rr_+}$ at $e^H$ is $\chi_{\ww}(e^H)=e^{\frac{\pm k \pi i}{n_c}}$ with multiplicity $m_{\sRa,0}(e^H)$;
\item $\Phi(\pm \rr_+)=\pm k$, where $k$ is the Coxeter exponent, with multiplicity $m_{\Phi,0}(e^H)$.
\end{enumerate}

The classical correspondence of Coxeter exponents in the $ADE$ Lie theory is given by 
$$m_{\Pi}(e^H)=m_{\sRa,0}(e^H)=m_{\Phi,0}(e^H).$$
Theorem \ref{thm: exponents} is a generalization of this correspondence for any GUS-rep of the Verlinde algebra of any $ADE$ Lie algebra $\g$ at any level $\ell$.
A modified correspondence for any US-rep or GUS-rep of the Verlinde algebra of any simple Lie algebra $\g$ at any level $\ell$ is proved in Theorems \ref{Thm:m=m} and \ref{Thm:r} respectively.
We answer the question posed by Kac at MIT in 1994 and the recent comment posed by Gannon at Vanderbilt University in 2017 positively.

When $\g=\ssl_n$, we have $\sR=\sRa$, and 
$$ T_{\ell,0}=\left\{ diag(p_1,p_2,\ldots,p_n) : \Pi_{k=1}^n p_k=1,~ p_k\neq p_j, ~p_k^{n_c}=p_j^{n_c} \right\}.$$
Let $v_1,v_2,\cdots,v_n$ be the weights corresponding to the ONB of the standard representation of $\ssl_n$. Then 
$\displaystyle \sum_{k=1}^n v_k=0$.
For any $e^H \in T_{\ell}$, $e^{i \langle v_j, H \rangle }=p_j$.
Moreover, for any $\theta\in W \cong S_n$, its action on $\{v_1,v_2,\cdots,v_n\}$ is a permutation: $\theta(v_i)=v_{\theta(i)}$.
Note that $\rr_+=v_1-v_n$
The quantum Coxeter exponent is given by $\Phi(e^H,\theta(\rr_+))=\frac{\langle \theta(\rr_+), H \rangle}{2\pi}$, and
$$\displaystyle e^{\frac{ 2\pi i}{n_c} \Phi(e^H,\theta) } = e^{i \langle \hr, H \rangle }=\frac{p_{\theta(1)}}{p_{\theta(n)}}.$$

\section{Summary}\label{Sec: Summary}

\textbf{Main results:}
Let $\g$ be a simple complex Lie algebra such that it is the complexification of the Lie algebra $\mathfrak{k}$ of a simply-connected compact Lie group $K$. Let $\mathfrak{t}$ be a Cartan subalgebra and $T=\{e^H | H\in \mathfrak{t} \}$ be the maximal torus in $K$.
Let $c_\g^\vee$ be the dual Coxeter number, $\fw$ be the fundamental weights, $\rho$ be the Weyl vector, $\sW$ be the weight lattice and $\sR$ be the root lattice, $W$ be the Weyl group, and $\sRa$ be the sublattice of $\sR$ generated by $\rooth:=\{ \theta(\rr_+) | \theta \in W\}$, where $\rr_+$ is a highest positive root. Take $Z(\g)=\sW/\sR$ and $n_z=|Z(\g)|$. The exponential map $H\to e^{2 \pi H}$ is a group isomorphism 
$\mathfrak{t}/\mathfrak{R}\cong T$, where $\mathfrak{R}$ is the coroot lattice. Therefore the map $\hat{} : \kk\to \hat{\kk}$ given by
$$\hat{\kk}(e^H)=e^{2 \pi i \langle \kk, H \rangle},  ~\kk\in \sW,$$
is well defined on $e^H \in T$. 
The Fourier transform $~\hat{}~$ is a group isomorphism from $\sW$ to the dual of the abelian group $T$.

For any level $\ell \in \mathbb{N}$, take the quantum Coxeter number $n_c=c_\g^\vee+\ell$. We call $\sWl=\sW/n_c\sRa$ the weight torus.
We define its Fourier dual $T_{\ell}$ as a finite subgroup of $T$, which admits a Weyl group $W$ action. Let $\sW_{\ell,0}$ and $T_{\ell,0}$ be the corresponding subsets of elements off the Weyl mirrors. Take an alcove $\CL$ containing $\fw$ and $\spec$ to be the fundamental domains of $\sW_{\ell}$ and $T_{\ell,0}$ subject to the Weyl group action respectively.

Let $L^2(T_{\ell,0})^W$ be the Hilbert space of anti-symmetric functions with respect to the Weyl group action and the measure is a Haar measure.  We introduce a multiplication $\star$ and an involution $*$ on $L^2(T_{\ell,0})^W$ and show that $L^2(T_{\ell,0})^W$ becomes an abelian $C^*$ algebra.  
We construct an orthonormal basis (ONB) $\{|W|^{-\frac{1}{2}} \wchi_{\kk}, \kk \in \CL\}$ of $L^2(T_{\ell,0})^W$, and prove that $\{\wchi_{\kk}, \kk \in \CL\}$ form a $Z(\g)$-graded fusion ring $R_{\ell}$, and the corresponding $C^*$ is isomorphic to the Verlinde algebra  of quantum $\g$ at level $\ell$. Therefore, we call the anti-symmetric function $\wchi_{\kk}$ an $\ell$-character, and $L^2(T_{\ell,0})^W$ the $\ell$-character Verlinde algebra. In particular, the fusion coefficients of representations can be computed from the inner product of $\ell$-characters: $$\tilde{N}_{\kk,\jj}^{\ssb}=\frac{1}{|W|}\langle \wchi_{\kk} \star \wchi_{\jj}, \wchi_{\ssb}\rangle.$$

Suppose $\hil$ is a finite dimensional Hilbert space and $\Pi: R_{\ell} \to \hom(\hil)$ is a unital, *-representation. 
We prove that for any common eigenvector $v \in \hil$ of the matrices $\{ \Pi_{\ww}:=\Pi(\wchi_{\ww+\rho}) \}_{\ww \in \fw}$, there is a $e^H \in \spec$, such that 
$$\Pi_{\ww} v=\chi_{\ww}(e^H)v, \ww\in \fw,$$
where $\chi_{\ww}$ is the Weyl character.  Therefore we call $\spec$ the spectrum of $R_{\ell}$ and $e^H$ the spectrum of $v$. 
We define the multiplicity $m_{\Pi}$ of $e^H$ to be the dimension of the corresponding eigenspace in $\hil$.

Furthermore, we lift the spectrum from $\spec$ to $T_{\ell,0}$ and define the corresponding eigenspace
$$\El:= \{ \tg \in L^2(T_{\ell}) \otimes \hil \; | \; (\chi_{\ww} \otimes I-I\otimes \Pi_{\ww}) \tg=0, \; \forall \; \ww \in \fw \}.$$
The Fourier dual of $\El$ is 
$$\Hl:=\{ \tf \in L^2(\sWl) \otimes \hil \; | \; (\Delta_{\ww} \otimes I-I\otimes \Pi_{\ww}) \tf=0, \; \forall \; \ww \in \fw \},$$
generalizing additive functions on the Auslander-Reiten quiver. We call $\Hl$ the full quantum root space.  
Take the orthogonal projection $P_{\Hl}: L^2(\sWl) \otimes \hil \to \Hl$. We prove the following inner product formula in Theorem \ref{Thm:inner product}:
For any $\kk,\jj \in \sWl$ and $v_1, v_2 \in \hil$,
we have 
\begin{equation*}
\langle P_{\Hl} (\delta_{\kk}\otimes v_1),   P_{\Hl} (\delta_{\jj} \otimes v_2) \rangle=\frac{1}{|T_{\ell}|} \sum_{\theta\in W} \varepsilon(\theta)  \langle v_1, \Pi(\wchi_{\jj-\kk+\theta(\rho)}) v_2 \rangle.
\end{equation*}
Based on the inner product formula, we define the quantum root sphere and quantum root system.

Let $\vartheta_\jj$, $\jj \in \sWl$ be the translation by $\jj$ on $\sWl$ and the induced action on $\Hl$ is denoted by $\ttj$.
We prove that if $\tf$ is a common eigenvector of the translations, then 
$$\ttj \tf = e^{ i \langle -\jj, H \rangle} \tf, ~ \forall ~\jj \in \sWl$$
for some $e^H \in T_{\ell,0}$.
We call $e^H$ the spectrum of $\tf$.

For any intermediate group $A$ of $n_c\sRa \subset \sW$,
we define the Fourier dual of $\sW/A$ as $$T_{A}=\left\{e^{2\pi H} \in T | H \in \mathfrak{t}_{A} \right\} \subset T_{\ell},$$
where $\mathfrak{t}_{A}=\{H\in \mathfrak{t} | \langle  \rr, H \rangle\in \mathbb{Z}, \forall \rr\in A \}$.
We define the multiplicity $m_{A}$ of $e^H$ to be the dimension of the common eigenspace in $\Hl$ of the translations $\{\ttj\}_{\jj \in A}$ with spectrum $e^H$. We prove our main theorem that, for any $e^H\in T_{\ell,0}$,
$$m_{A}(e^H)= \sum_{e^{H'} \in T_{A}} m_{\Pi}(e^{H+H'}).$$
In particular, $m_{\sW}(e^H)= m_{\Pi}(e^{H})$.

For any $\hr \in \rooth$, the order of $\hr$ in the group $\sWl$ is the quantum Coxeter number $n_c$.
So the translation $\vartheta_{\theta(\rr+)}$ has periodicity $n_c$, and we call it a quantum Coxeter element.  
We define the exponent map $\Phi: T_{\ell,0} \times \rooth \to \Z_{n_c}$, such that 
$$e^{\frac{2\pi i}{n_c} \Phi(e^H,\hr)}= e^{i \langle \hr, H \rangle }.$$
We define the multiplicity of the quantum Coxeter exponent $\Phi(e^H,\cdot)$ for the action of quantum Coxeter elements on $\Hl$ as 
$$m_{\Phi}(e^H)=\dim \{ f \in \Hl \; | \;  \ttr f=e^{i  \langle \hr, H \rangle} f, \; \forall \; \hr \in \rooth\}.$$
Then
$$m_{\Phi}(e^H)=m_{\sRa}(e^H)=\sum_{e^{H'} \in T_{\sRa}} m_{\Pi}(e^{H+H'}).$$

Furthermore, if the representation $\Pi$ is $Z(\g)$-graded, then the full quantum root space $\Hl$ is also $Z(\g)$-graded. Moreover, $\Hl$ decomposes into $n_z$ copies of $\Hlz$, called the quantum root space, where $\Hlz$ is the neutral subspace and it is invariant under the action of $\vartheta_{\jj}$, for any $\jj \in \sR$. 
In particular, $\Hlz$ is invariant under the action of quantum Coxeter elements $\{\vartheta_{\hr}  \}_{\hr \in \rooth}$. 

For an intermediate subgroup $A$ of $n_c\sRa \subset \sR$, we define the multiplicity $m_{A,0}$ of $e^H$ to be the dimension of the common eigenspace in $\Hlz$ of the translations $\{\ttj\}_{\jj \in A}$ with spectrum $e^H$. Then $m_{A}=n_z m_{A,0}$, and $m_{A}(e^H)=m_{A}(e^{H+H'})$ for any $e^{H'} \in T_{\sRa}$. In particular, 
$$m_{\sR,0}(e^H)=m_{\Pi}(e^{H}).$$ 
We define the multiplicity of the quantum Coxeter exponent $\Phi(e^H,\cdot)$ for the action of quantum Coxeter elements on $\Hlz$ as 
$$m_{\Phi}(e^H)=\dim \{ f \in \Hl \; | \;  \ttr f=e^{i  \langle \hr, H \rangle} f, \; \forall \; \hr \in \rooth\},$$
and prove that 
$$m_{\Phi,0}(e^H)=m_{\sRa,0}(e^H)=\sum_{e^{H'} \in T_{\sRa}/T_{\sR}} m_{\Pi}(e^{H+H'}).$$
In particular, if $\g$ is an ADE Lie algebra, then $T_{\sR}=T_{\sRa}$. We have that 
$$m_{\Phi,0}(e^H)=m_{\Pi}(e^{H}).$$
This generalizes the correspondence of Coxeter exponents in the $ADE$ Lie theory and we answer the questions of Kac and Gannon.

\end{document}